\DeclareMathAlphabet\mathbfcal{OMS}{cmsy}{b}{n}
 \newtheorem{thm}{Theorem}[section]
 \newtheorem{defn}[thm]{Definition}
 \newtheorem{lem}[thm]{Lemma}
 \newtheorem{prop}[thm]{Proposition}
 \newtheorem{cor}[thm]{Corollary}
 \newtheorem{rem}[thm]{Remark}
 \newcommand{\bthm}{\begin{thm}}
 \newcommand{\ethm}{\end{thm}}
 \newcommand{\bd}{\begin{defin}}
 \newcommand{\ed}{\end{defin}}
 \newcommand{\blem}{\begin{lem}}
 \newcommand{\elem}{\end{lem}}
 \newcommand{\bcor}{\begin{cor}}
 \newcommand{\ecor}{\end{cor}}
 \newcommand{\bprop}{\begin{prop}}
 \newcommand{\eprop}{\end{prop}}
 \newcommand{\brem}{\begin{rem} \rm}
 \newcommand{\erem}{\end{rem}}
 \newcommand{\bex}{\begin{ex} \rm}
 \newcommand{\eex}{\end{ex}}
 \newcommand{\beq}{\begin{equation}}
 \newcommand{\eeq}{\end{equation} }
 \newcommand{\bea}{\begin{eqnarray}}
 \newcommand{\eea}{\end{eqnarray}}
 \newcommand{\beas}{\begin{eqnarray*}}
 \newcommand{\eeas}{\end{eqnarray*}}
 \newcommand{\beqs}{\begin{equation*}}
 \newcommand{\eeqs}{\end{equation*}}
 \newcommand{\bi}{\begin{itemize}}
 \newcommand{\ei}{\end{itemize}}
 \newcommand{\ben}{\begin{enumerate}}
 \newcommand{\een}{\end{enumerate}}
 \newcommand{\ba}{\begin{array}}
 \newcommand{\ea}{\end{array}}
 \newcommand{\ds}{\displaystyle}
 \newcommand{\N}{\mathbb N}
\newcommand{\NN}{\mathbb N}
\newcommand{\CC}{\mathbb C}
\newcommand{\RR}{\mathbb R}
\newcommand{\ZZ}{\mathbb Z}
\newcommand{\SSS}{\mathcal S}
\definecolor{darkred}{rgb}{0.8,0,0}
\def\col{\color{darkred}}
\begin{document}

\title{Ultradifferentiable functions
via the Laguerre operator}

\author{S. Jak\v si\'c}
           \address{Faculty of Forestry, University of Belgrade, Kneza Vi\v seslava 1, Belgrade, Serbia}
 \email{smiljana.jaksic@sfb.bg.ac.rs}

\author{S. Pilipovi\'c }
          \address{Department of Mathematics, Faculty of Sciences, University of Novi Sad, Trg Dositeja Obradovi\'ca 4, Novi Sad, Serbia}
          \email{stevan.pilipovic@dmi.uns.ac.rs}

   \author{N. Teofanov}
  \address{Department of Mathematics, Faculty of Sciences, University of Novi Sad, Trg Dositeja Obradovi\'ca 4, Novi Sad, Serbia}
  \email{tnenad@dmi.uns.ac.rs@dmi.uns.ac.rs}

\author[{\DJ}. Vu\v{c}kovi\' c]{\DJ or\dj e Vu\v{c}kovi\' c}
\address{\DJ or\dj e Vu\v ckovi\'c, Technical Faculty ``Mihajlo Pupin'', \DJ ure \DJ akovi\'{c}a bb, 23000 Zrenjanin, Serbia}
\email{djordjeplusja@gmail.com}

\begin{abstract}
We define and  characterize ultradifferentiable functions
and their corresponding ultradistributions on $\RR^d_+$ using iterates of the Laguerre operator.
The characterization is
based on decay or growth conditions of the coefficients in their Laguerre series expansion.
We apply our results to establish an isomorphism between
subspaces of Pilipovi\'c spaces on $\RR^d$, and the spaces of ultradifferentiable functions
on  $\RR^d_+$.
\end{abstract}

\keywords{ultradifferentiable functions, Gelfand-Shilov spaces, ultradistributions,
Hermite and Laguerre expansions, Laguerre operator}

\subjclass[2010]{
46F05, 33C45
}

\maketitle


\section{Introduction} \label{sec:1}

Recently, Toft, Bhimani and Manna
demonstrated that (global) Pilipovi\'c spaces on $\RR^d$, $\mathcal H_{\alpha}(\RR ^d)$ and $\mathcal H_{0,\alpha}(\RR ^d)$, $\alpha > 0$,
and their dual spaces of distributions, appear as a convenient framework
in the study of the well-posednedness of certain types of Schr\"odinger equations,
which are ill-posed in Gelfand Shilov spaces of test functions and in their dual spaces of ultradistributions, see \cite{TBM}.
In this paper we address the following question:

Q: What are natural counterparts of global Pilipovi\'c spaces when considering the positive orthants
$\RR^d _+$ instead of $\RR^d$?

We recall that   $\mathcal H_{\alpha}(\RR ^d)$ and $\mathcal H_{0,\alpha}(\RR ^d)$, $\alpha > 0$,
are defined in terms of the Hermite operator, and introduce Pilipovi\'c spaces ($P-$spaces)  on positive orthants,  $\mathbf{G}^{\alpha}_{\alpha}(\RR^d_+)$ and  $\mathbf{g}^{\alpha}_{\alpha}(\RR^d_+)$,
$\alpha > 0$, of Roumieu and Beurling type by utilizing the iterates of the Laguerre operator.

We show that elements of  $\mathbf{G}^{\alpha}_{\alpha}(\RR^d_+)$ and  $\mathbf{g}^{\alpha}_{\alpha}(\RR^d_+)$, $\alpha > 0$, can be expressed through Laguerre series expansion, and establish a topological isomorphism between these spaces and sequence  spaces
$\ell_{\alpha}(\NN_0 ^d)$ and $\ell_{0,\alpha}(\NN_0 ^d)$ considered in \cite{Toft1}.
As a consequence of the Laguerre series expansion, we deduce that $\mathbf{G}^{\alpha}_{\alpha}(\RR^d_+)$ and  $\mathbf{g}^{\alpha}_{\alpha}(\RR^d_+)$ coincide with
$G-$type spaces  $G_\alpha^\alpha(\RR^d_+)$ and $g_\alpha^\alpha(\RR^d_+)$, respectively, when these spaces are nontrivial.  We refer to \cite{D4, SSB, SSBb} for $G-$type spaces.
As an independent side result, we prove that $g_1 ^1 (\RR^d_+) = \{ 0\}$.
While this  seems to be a known fact, we could not find an explicit proof in
the existing literature. Then we identify $(\mathbf{g}_1 ^1 (\RR^d_+))'$
as the smallest dual space of a $P-$space on $\RR^d_+$ that is not a (non-trivial)
$G-$type space.

Furthermore, to answer the question Q, we establish  a natural isomorphism between Pilipovi\'c spaces on $\RR^d$ consisting of even functions, and
$P-$spaces on $\RR^d _+$, extending main results from \cite{SSSB} where  an isomoprhism between subspaces of Gelfand-Shilov spaces and $G$-type spaces is shown.  Finally,  we conclude
the paper by exploring  the properties of $P-$flat
spaces on $\RR^d _+$, which are associated with  Pilipovi\'c flat
spaces on $\RR^d $ considered in e.g. \cite{Toft1}.

In addition, by using duality arguments we transfer our main results to the corresponding spaces of ultradistirbutions $(\mathbf{G}^{\alpha}_{\alpha})'(\RR^d_+)$ and
$(\mathbf{g}^{\alpha}_{\alpha})'(\RR^d_+)$.

\par

We may briefly summarize a background of our research as follows.

Spaces of smooth functions on $\RR^d_{+}$, which might serve as counterparts for the well-known spaces of Schwartz functions  $\mathcal S (\mathbb R^d) $, or Gelfand-Shilov functions $\Sigma_{\alpha}^{\alpha}(\mathbb R^d)$, $\alpha > 1/2$, and  $\SSS^{\alpha}_{\alpha}(\RR^d)$, $\alpha \geq 1/2$ (of Beurling and Roumieu  type, respectively)
have been studied for decades. In the 1990s, Duran studied Schwartz functions and tempered distributions on $\RR_{+}$, as well as Gelfand-Shilov type spaces and their distribution spaces on $\RR_{+}$ in \cite{D0, DLP, D3, D4, D1}. Comparing to the standard toolbox used to study
the spaces in the global (i.e.  $\RR^d$) setting,
different techniques are employed
when related spaces on orthants are considered. This includes, for example, Laguerre (eigen)expansions,  and  the Hankel-Clifford transform.
Duran's results were extended in \cite{SSB, SSBb, SSSB}, were the test spaces of ultradifferentiable functions of Beurling and Roumieu  type, denoted by  $g_\alpha^\alpha(\RR^d_+)$ and
$G_\alpha^\alpha(\RR^d_+)$, $\alpha\geq 1$, respectively, are introduced and characterized via decay properties of  coefficients in Laguerre series expansions of their elements.
The spaces  $g_\alpha^\alpha(\RR^d_+)$ and
$G_\alpha^\alpha(\RR^d_+)$ are non-trivial when $\alpha>1$ and $\alpha\geq 1$ respectively, and then we have dense and continuous inclusions:
$ g_\alpha^\alpha(\RR^d_+)\hookrightarrow G_\alpha^\alpha(\RR^d_+)$, $ \alpha >1$.

\par

In the global setting, the spaces of Gelfand - Shilov type
$\Sigma_{\alpha}^{\alpha}(\mathbb R^d)$, $ \alpha > 1/2$, and $\mathcal S_{\alpha}^{\alpha}(\mathbb R^d)$, $ \alpha \geq 1/2$, are characterized by the decay of their coefficients in Hermite (eigen)expansions, cf. \cite{lan, LCP, SP1988}. More generally, these spaces are characterized by the decay properties of the  coefficients
in expansions involving eigenfunctions of certain elliptic and positive Shubin-type operators $P(x,D)$, see \cite{GPR0,VV}.

Pilipovi\'{c} spaces $\mathcal H_{0,\alpha} (\mathbb R^d)$, $ \alpha > 0$, were initially introduced  in \cite{SP1988} by considering the iterates of the Hermite operator
(linear Harmonic oscillator). These spaces are nontrivial for any $ \alpha >0$, and for $\alpha \geq \frac{1}{2}$ they coincide with Gelfand-Shilov spaces $\Sigma_{\alpha}^{\alpha}(\mathbb R^d)$. Similar relations hold for the Roumieu case (with $\alpha > \frac{1}{2}$) as well, cf. \cite{Toft1}.
In this paper, we take a similar approach (see also \cite{SP,Zemanian}), and introduce test function spaces on  $\RR^d_+$ by using a suitable self-adjoint operator. Due to their close relation to Pilipovi\'{c} spaces on $\mathbb R^d$ we call them Pilipovi\'{c} spaces  on positive orthants, or $P-$spaces for short.

\par

The paper is organized as follows. In Section \ref{sec:2} we set the stage for our results by
collecting main notions which will be used in the sequel. We also prove some properties of sequence spaces (see subsection \ref{subsec:topology}), and the triviality of $g_1 ^1 (\RR^d_+) $, Theorem
\ref{triviality}.

$P-$spaces on
$\RR^d_+$, together with their representation in terms of Laguerre series expansions are introduced in Section \ref{sec3}. Theorems \ref{glavna} and \ref{glavnabeurling} extend existing results
\cite[Theorem 3.6]{D1}, \cite[Theorem 5.7]{SSB}, and \cite[Theorem 3.7]{SSBb}.
In Section \ref{sec4} we show that $L^2$-norm in the definition of  $P-$spaces can be replaced by any other $L^p$-norm, $ p\in [1,\infty]$,  Proposition \ref{equiv-norm1}. We establish in Section \ref{sec5} an isomorphism between subspaces of global Pilipovi\'c spaces and
$P-$spaces on $\RR^d_+$, Theorems \ref{even-spaces-1} and \ref{even-spaces-2}. These results  extend Propositions 3.1 and 3.2 from \cite{SSSB}.
Finally, in Section \ref{sec6} we introduce and study flat Pilipovi\'c spaces on orthants.



\section{Preliminaries} \label{sec:2}

In this section we collect some background  notions and properties which will be used in the sequel,
and start with a review of some basic notation.

\subsection{Notation}

We denote by $\NN $, $\ZZ $, $\RR $ and $\CC$ the sets of positive
integers, integers, real and complex numbers, respectively. Furthermore,
$\NN_0=\NN \cup\{0\}$, $\RR_+=(0,\infty)$,  $\overline{\RR_+}=[0,\infty)$.
The upper index $d \in \NN$ will denote the dimension,
$\RR^d_+=(0,\infty)^d$, etc.

We use the standard multi-index notation: for $\alpha\in\NN_0^d$ (or $ \alpha \in \overline{\RR^d_+}$)
and $x\in\RR^d$ (or $x\in\overline{\RR^d_+}$), $x^\alpha=x_1^{\alpha_1}...x_d^{\alpha_d}$, $0^0=1$, and $\partial ^\alpha=\partial^{\alpha_1}/{\partial x_1^{\alpha_1}}...\partial^{\alpha_d}/\partial x_d^{\alpha_d}$.
The length of $\alpha \in \NN_0^d$ is given by $|\alpha| = \sum_{i=1}^d \alpha_j$.
For $\gamma=(\gamma_1,\ldots,\gamma_d)\in\RR^d$ such that $-\gamma_j\not\in\NN$, $j=1,\ldots,d$, and $m\in\NN^d_0$, we use the abbreviation
\begin{equation*}
 \ds {\gamma\choose m}=
 \prod_{j=1}^d {\gamma_j\choose m_j}
= \prod_{j=1}^d \frac{\gamma_j (\gamma_j - 1) \dots (\gamma_j - m_j +1)}{m_j !}.
\end{equation*}

We also use the common notation for standard spaces of functions, distributions, and sequences.
For example, $ \ell^p $, $ p\geq 1$, denotes the Banach space of Lebesgue sequences, endowed with the  usual norm:
$$
\{a_n \}_{n \in \NN ^d _0} \in \ell^p \qquad \Longleftrightarrow \qquad \| a_n \|_{\ell^p}  =
( \sum_{n \in \NN ^d _0} |a_n|^p )^{1/p} <\infty,
$$
and $\displaystyle \{a_n \}_{n \in \NN ^d _0} \in \ell^\infty $ if and only if $
\|a_n \|_{ \ell^\infty } = \sup_{n \in \NN ^d _0} |a_n | < \infty$.

A  measurable function $f$ belongs to the Hilbert space  $L^2(\RR^d_+)$ if
$$ \|f\|_{L^2(\RR^d_+)} :=  \ds (\int_{\RR^d_+} |f(x)|^2 dx)^{1/2}<\infty,
$$ and the scalar product is given by
$ \ds
\langle f ,g \rangle = $ $ \int_{\RR^d_+} f(x) \overline{g (x)} dx, $ $ f,g \in L^2(\RR^d_+).$

By $\SSS(\mathbb{R}_+^d)$ we denote the space of all smooth functions on positive orthant, $f\in C^{\infty}(\RR^d_+)$ such that all of its derivatives $\partial^\alpha f$, $\alpha \in\NN^d_0$, extend to continuous functions on
$\overline{\RR^d_+}$, and
$$
\sup_{x\in\mathbb{R}^d_+}x^\beta |\partial^\alpha f(x)|<\infty,
\qquad \alpha, \beta \in\mathbb{N}_0^d,$$
and  $s$ denotes the space of rapidly decreasing sequences:
\begin{equation}
\label{eq:prostor-nizova-s}
\{a_n \}_{n \in \NN ^d _0} \in s \quad \Longleftrightarrow \quad
 \sum_{n \in \NN ^d _0} |a_n| |n|^{N} <\infty, \quad \forall N \in \NN.
\end{equation}

Let $I$ be a directed set and let  $\{X_{\alpha}\}_{\alpha\in I}$ be a family of normed spaces.
 By $ \bigcup_{\alpha\in I} X_{\alpha}$ we denote
 the union of $\{X_{\alpha}\}_{\alpha\in I}$, endowed with  inductive limit topology, and by
$ \bigcap_{\alpha\in I} X_{\alpha}$ we denote
 the  intersection of $\{X_{\alpha}\}_{\alpha\in I}$, endowed with   projective limit topology.

If $\mathcal{A}$ is a space of test functions, then $\mathcal{A}'$ denotes its dual space of (ultra)distributions, and $_{\mathcal{A}'} (\cdot, \cdot )_{\mathcal{A}} =  (\cdot, \cdot )$
denotes the dual pairing.

\subsection{Laguerre functions and Laguerre operator}

For $ n \in\mathbb{N}_0$ and $\gamma\geq 0$, the $n$-th Laguerre
polynomial of order $\gamma$ is defined by
$$
L_n ^\gamma(x)=\frac{x^{-\gamma}e^x}{n!}\frac{d^n}{dt^n}(e^{-x}x^{\gamma+n}),\quad
x \geq 0,
$$
and the $n$-th Laguerre function is defined by
$l_n ^\gamma (x)=L_n ^\gamma (x)e^{-x/2}$, $x \geq 0$.
The $d$-dimensional Laguerre polynomials and Laguerre
functions are given by tensor products:
$$
L_n^\gamma(x)=\prod_{k=1}^d L_{n_k}^\gamma(x_k) \qquad \text{ and} \qquad
l_n^\gamma(x)=\prod_{k=1}^d l_{n_k}^\gamma (x_k), \qquad
x \in \overline{\mathbb{R}^d _+}, \quad  n\in\NN^d_0.
$$
We will be particularly interested in the case $\gamma=0$, and write $L_n$ and $l_n$ instead of $L_n^0$ and $ l_n^0$, respectively. The Laguerre functions $\{l_n\}_{n\in\NN^d_0}$ form an orthonormal basis in $L^2(\mathbb{R}^d_+)$, cf. e.g. \cite{Thang, Wong}. Thus, every $f\in L^2(\mathbb{R}^d_+)$ can be expressed  via its Laguerre series expansion
$ \displaystyle
f = \sum_{n=0} ^{\infty} a_n(f) l_n, $ where
$a_n (f)$ denote the corresponding  Laguerre coefficients:
\begin{equation}
\label{eq:Laguerrecoeff}
a_n(f)=\int_{\mathbb{R}^d_+}f(x)l_{n}(x)dx, \qquad n\in\mathbb{N}^d_0.
\end{equation}

Moreover, the mapping $\iota:\mathcal{S}(\mathbb{R}^d_+)\rightarrow s$, given by
$\iota(f)=\{a_n(f)\}_{n\in\NN^d_0}$, is a topological isomorphism, see \cite[Theorem 3.1]{Sm}).

\par

The Laguerre functions are eigenfunctions of  the Laguerre operator $E$ given by
$$
E=-\sum_{j=1}^d\Big(  \frac{\partial}{\partial x_j}(x_j \frac{\partial}{\partial x_j})-\frac{x_j}{4}+\frac{1}{2} \Big)=-\sum_{j=1}^d \Big(x_j\frac{\partial ^2}{\partial x_j^2}+\frac{\partial}{\partial x_j}-\frac{x_j}{4}+\frac{1}{2}\Big),
$$
 and
$$
E ( l_n(x))=\Big(\sum_{i=1}^d  n_j\Big )\cdot l _n(x)=|n| \cdot l _n(x),\qquad
x \in \mathbb{R}^d _+, \quad  n\in\NN^d_0.
$$
see \cite[(11), p.188]{Ed}. It follows that
\begin{equation}\label{Eigen}
E^{N} ( l_n(x))=|n|^N l_{n}(x),
\quad x \in \mathbb{R}^d _+, \quad  N\in \mathbb N.
\end{equation}

\subsection{Hermite functions  and Hermite operator}

The Hermite polynomials $H_n$ and the corresponding Hermite functions $h_n$ are defined by
$$
H_n(x) =(-1)^n e^{x^2}\frac{d^n}{dx^n}(e^{-x^2}),
   \qquad  h_n (x)= (2^n n!\sqrt{\pi})^{-1/2} e^{-x^2/2}H_n(x),
    \quad
    x\in\mathbb{R}, \; n\in\NN_0,
$$
respectively. The $d$-dimensional Hermite polynomials $H_n$ and Hermite functions $h_n$ are given by
the tensor products
$$ H_n(x)=H_{n_1}(x_1)\ldots H_{n_d}(x_d),\qquad \mbox{and} \qquad
h_n(x)=h_{n_1}(x_1)\ldots h_{n_d}(x_d),\quad
x\in\mathbb{R}^d,\; n\in\N_0^d,
$$
respectively. Next, we introduce the Hermite operator:
$$
H= |x|^2-\Delta=\sum_{j=1}^d (x_j^2 - \frac{\partial^2}{\partial x^2_j}) \quad
x \in\mathbb{R}^d, \quad  n\in\N_0^d.
$$
Then the Hermite functions are eigenfunctions of $H$,
$ \displaystyle
H ( h_n(x))=(2|n|+d)h_n (x)$, $ x\in\RR^n$, $ n\in\N_0^d, $
see \cite[pp. 8]{Toft1}, and
\begin{equation}\label{EigenHermit}
H^{N} ( h_n(x))=  (2|n|+d)^N h_{n}(x)
 \qquad
x \in\mathbb{R}^d, \quad  N\in \mathbb N.
\end{equation}

\par

Hermite and Laguerre polynomials can be related as follows:
$$
H_{2n} (x) = (-1)^n 2^{2n} n! L_n ^{-1/2} (x^2), \qquad
H_{2n+1} (x) = (-1)^n 2^{2n} n! L_n ^{1/2} (x^2) x, \quad
x\in \mathbb{R} ^d,  n\in\N_0 ^d,
$$
see (1.1.52) and (1.1.53) in \cite{Thang}.

\subsection {The sequence spaces  $\ell_{\alpha}(\NN^d _0)$ and $\ell_{0,\alpha}(\NN^d _0)$}
\label{subsec:topology}

We utilize the notation from \cite{Toft1}.

\begin{defn} Let $\alpha>0 $,  $h>0$, and put $\vartheta_{h,\alpha}(n): = e^{h|n|^{1/(2\alpha)}}, $  $ n \in \NN^d_0$.  By $\ell^{p}_{[\vartheta_{h,\alpha}]}(\NN_0^d)$, $p \geq 1$,
we denote the space of all sequences $\{a_n\}_{n\in\NN^d_0}$, $a_n\in\mathbb{C}$, $n\in\N_0^d$,
which satisfy
\begin{equation} \label{eq:norm-sequence}
\|\{a_n\}_{n\in\mathbb \NN^d_0}\|_{\ell^{p}_{[\vartheta_{h,\alpha}]}}
:=\|\{|a_n|\cdot \vartheta_{h,\alpha}(n)\}_{n\in\NN^d_0}\|_{\ell^{p}}<\infty.
\end{equation}
\end{defn}

The space $\ell^{p}_{[\vartheta_{h,\alpha}]}(\NN_0^d)$ is a Banach space, with the
norm $ \| \cdot \|_{\ell^{p}_{[\vartheta_{h,\alpha}]}}$ given in \eqref{eq:norm-sequence}, and with the usual modification when $p = \infty$. Next, we introduce  locally convex spaces
$$
\ell_{\alpha}(\NN_0 ^d)=\bigcup_{h>0} \ell^{\infty}_{[\vartheta_{h,\alpha}]}(\NN_0^d)
\quad \text{and } \quad
\ell_{0,\alpha}(\NN_0^d) =\bigcap_{h>0} \ell^{\infty}_{[\vartheta_{h,\alpha}]}(\NN_0^d),
$$
in the sense of inductive and projective limit topology of $\ell^{\infty}_{[\vartheta_{h,\alpha}]}(\NN_0^d)$ with respect to $h>0$, respectively. In addition, we consider their dual (locally convex) spaces
$$
\ell_{\alpha}'(\NN_0 ^d)=\bigcap_{h>0} \ell^{\infty}_{[1/\vartheta_{h,\alpha}]}(\NN_0^d)
\quad \text{and } \quad
\ell_{0,\alpha}'(\NN_0^d) =\bigcup_{h>0} \ell^{\infty}_{[1/\vartheta_{h,\alpha}]}(\NN_0^d),
$$
in the sense of projective and inductive limit topology of $\ell^{\infty}_{[1/\vartheta_{h,\alpha}]}(\NN_0^d)$ with respect to $h>0$, respectively.
When $\alpha=0 $,  $\ell_0(\NN^d_0)$ denotes the space of all sequences $\{a_n\}_{n\in\NN^d_0}$ such that $a_n\not=0$ for only finitely many $n \in \NN_0^d$, and $\ell_{0,0}(\NN^d_0)=\{0\}.$

Note that a different notation is used in  \cite{SSB}. The reader may immediately see that for  $\alpha \geq 1$, the sequence space $s^{\alpha,a} (\NN_0^d)$ defined in \cite{SSB}
is the same as $\ell^{\infty}_{[\vartheta_{\ln a,\alpha/2}]}(\NN_0^d)$,
$a>1$, and consequently
$ s^\alpha (\NN_0^d) = \ell^{\infty}_{\alpha/2} (\NN_0^d)$.

\begin{rem}\label{rem:norme}
Observe that different choices of $p \in [1,\infty]$ in \eqref{eq:norm-sequence}
give rise to equivalent families of norms for
$ \ell_{\alpha}(\NN_0 ^d)$ and $\ell_{0,\alpha}(\NN_0^d) $.
We are particularly interested in the case  $p\in \{2,\infty\}$, i.e.
$\ds \ell^{\infty}_{[\vartheta]}(\NN_0^d)= \ell^{2}_{[\vartheta]}(\NN_0^d)$. Thus, if
$\{a_n\}_{n\in\mathbb \NN^d_0} \in \ell^{\infty}_{[\vartheta_{h,\alpha}]}(\NN_0^d) $ then for any given $h>0$ there exist $h_1, h_2 >0$ and $C_1, C_2>0$ such that
$$ C_1 \|\{a_n\}_{n\in\mathbb \NN^d_0\}}\|_{\ell^{\infty}_{[\vartheta_{h_1,\alpha}]}(\NN_0^d)}
\leq
\Big(\sum_{n\in\NN^d_0}|a_n|^2 \vartheta_{h,\alpha}(n)\Big)^{1/2}
\leq
C_2  \|\{a_n\}_{n\in\mathbb \NN^d_0}\|_{\ell^{\infty}_{[\vartheta_{h_2,\alpha}]}(\NN_0^d)}.$$
\end{rem}

Regarding the topological structure of $\ell_{\alpha}(\NN^d _0)$ and $\ell_{0,\alpha}(\NN^d _0)$, we have the following.

\begin{prop} \label{lm:nizovi-potapanje}
Let there be given $\alpha>0$, and $h_1>h_2>0$. Then the canonical inclusion
\begin{equation} \label{eq:inclusion}
    \ell^{\infty}_{[\vartheta_{h_1,\alpha}]}(\NN_0^d)\rightarrow \ell^{\infty}_{[\vartheta_{h_2,\alpha}]}(\NN_0^d)
\end{equation}
is nuclear.
\end{prop}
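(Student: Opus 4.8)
The plan is to show that the canonical inclusion factors as a diagonal (multiplier) operator and to verify that the corresponding multiplier sequence is summable, which is the standard criterion for nuclearity of a map between $\ell^\infty$-type weighted sequence spaces. First I would write the inclusion explicitly: a sequence $\{a_n\}$ with $\sup_n |a_n|\vartheta_{h_1,\alpha}(n)<\infty$ is sent to the same sequence, and its norm in the target is $\sup_n |a_n|\vartheta_{h_2,\alpha}(n)$. Equivalently, identifying both spaces with $\ell^\infty(\NN_0^d)$ via the isometries $\{a_n\}\mapsto \{a_n\vartheta_{h_i,\alpha}(n)\}$, the inclusion \eqref{eq:inclusion} becomes the diagonal operator on $\ell^\infty(\NN_0^d)$ with multiplier
\[
\lambda_n = \frac{\vartheta_{h_2,\alpha}(n)}{\vartheta_{h_1,\alpha}(n)} = e^{-(h_1-h_2)|n|^{1/(2\alpha)}}, \qquad n\in\NN_0^d,
\]
and $h_1-h_2>0$.

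Next I would recall that a diagonal operator $D_\lambda\colon \ell^\infty\to\ell^\infty$, $D_\lambda(\{x_n\}) = \{\lambda_n x_n\}$, is nuclear whenever $\{\lambda_n\}\in\ell^1$; indeed one writes $D_\lambda = \sum_n \lambda_n \,(e_n^*\otimes e_n)$, where $e_n^*$ is the $n$-th coordinate functional (of norm $1$ on $\ell^\infty$) and $e_n$ the $n$-th unit vector (of norm $1$), so the nuclear norm is at most $\sum_n |\lambda_n|$. Hence it remains only to check that
\[
\sum_{n\in\NN_0^d} e^{-(h_1-h_2)|n|^{1/(2\alpha)}} < \infty.
\]
This is a routine estimate: setting $c=h_1-h_2>0$, the sum over $n$ with $|n|=m$ has at most a polynomial number $\binom{m+d-1}{d-1}$ of terms, so the series is bounded by $\sum_{m\ge 0}\binom{m+d-1}{d-1} e^{-c m^{1/(2\alpha)}}$, and since $m^{1/(2\alpha)}/\log m\to\infty$ the summand decays faster than any power of $m$, giving convergence. (Alternatively one factors over coordinates: $\sum_n e^{-c|n|^{1/(2\alpha)}} \le \sum_n e^{-c'\sum_j n_j^{1/(2\alpha)}}$ for a suitable $c'$ using $|n|^{1/(2\alpha)}\gtrsim \sum_j n_j^{1/(2\alpha)}$, and each one-dimensional factor converges.)

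I would then assemble these pieces: the inclusion \eqref{eq:inclusion} is, up to the isometries above, the diagonal operator $D_\lambda$ with $\lambda\in\ell^1(\NN_0^d)$, hence it is nuclear, and pre- and post-composing a nuclear operator with (isometric) isomorphisms preserves nuclearity. The only mild subtlety — and the one place that deserves a sentence of care rather than a throwaway — is making sure the nuclear-operator representation is valid in the $\ell^\infty$ setting: the functionals $e_n^*$ lie in $(\ell^\infty)'$ with norm $1$, the series $\sum_n \lambda_n e_n^*\otimes e_n$ converges absolutely in the space of nuclear operators because $\sum|\lambda_n|<\infty$, and its sum agrees with $D_\lambda$ pointwise on $\ell^\infty$, so they coincide as operators. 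Everything else is bookkeeping with the definitions of $\vartheta_{h,\alpha}$ and of nuclear maps.
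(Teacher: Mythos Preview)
Your proof is correct. Both you and the paper rely on the same summability fact, namely $\sum_{n\in\NN_0^d} e^{-c|n|^{1/(2\alpha)}}<\infty$ for $c>0$, and both use the coordinate functionals on the weighted $\ell^\infty$ space. The difference is in the packaging: the paper does not write down a nuclear representation directly but instead invokes Pietsch's criterion that a composition of two quasinuclear maps is nuclear. It therefore inserts an intermediate weight $h_3\in(h_2,h_1)$, factors the inclusion as $\ell^{\infty}_{[\vartheta_{h_1,\alpha}]}\to\ell^{\infty}_{[\vartheta_{h_3,\alpha}]}\to\ell^{\infty}_{[\vartheta_{h_2,\alpha}]}$, and checks each half against the quasinuclear definition (existence of functionals $e_m$ with $\sum_m\|e_m\|<\infty$ dominating the target norm). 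Your argument bypasses this detour by conjugating with the obvious isometries to $\ell^\infty(\NN_0^d)$ and exhibiting the inclusion as the diagonal operator $D_\lambda=\sum_n \lambda_n\, e_n^*\otimes e_n$ with $\lambda\in\ell^1$, which is already a nuclear representation. Your route is shorter and more transparent for this particular inclusion; the paper's route is the standard Pietsch-style template and generalizes mechanically to situations where a direct nuclear decomposition is less obvious.
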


\begin{proof}
The proof is similar to the proof of \cite[Proposition 2.1]{SSB}, with slight modifications, and we present it here for the sake of completeness.
We will use the fact that a mapping is nuclear if it can be represented as a
composition of two quasinuclear mappings, cf. \cite{Pietsch}.
Since the  inclusion \eqref{eq:inclusion}
can be considered as a composition of inclusions $ \ell^{\infty}_{[\vartheta_{h_1,\alpha}]}(\NN_0^d)\rightarrow \ell^{\infty}_{[\vartheta_{h_3,\alpha}]}(\NN_0^d)$
and
$ \ell^{\infty}_{[\vartheta_{h_3,\alpha}]}(\NN_0^d)\rightarrow \ell^{\infty}_{[\vartheta_{h_2,\alpha}]}(\NN_0^d)$, for some/any $h_3\in (h_2,h_1)$, it is enough to prove the quasinuclearity of the inclusion $I_{h_1}^{h_3}:\ell^{\infty}_{[\vartheta_{h_1,\alpha}]}(\NN_0^d)\rightarrow \ell^{\infty}_{[\vartheta_{h_3,\alpha}]}(\NN_0^d)$.

\par

By $(\ell^{\infty}_{[\vartheta_{h_1,\alpha}]}(\NN_0^d))'$ we denote the dual space of
$\ell^{\infty}_{[\vartheta_{h_1,\alpha}]} (\NN_0^d)$.  Let there be given
 $\{a_n\}_{n\in\NN^d_0}\in  \ell^{\infty}_{[\vartheta_{h_1,\alpha}]} (\NN_0^d)$ and let
 $e_m\in (\ell^{\infty}_{[\vartheta_{h_1,\alpha}]}(\NN_0^d))'$ be given by
$$
e_m\big(\{a_n\}_{n\in\NN^d_0}\big)=a_m e^{h_3 |m|^{1/(2\alpha)}}, \qquad m\in\NN^d_0.
$$
Then clearly
$\|e_m\|_{(\ell^{\infty}_{[\vartheta_{h_1,\alpha}]})'}\leq C e^{(h_3-h_1)|m|^{1/(2\alpha)}}$ for some $C>0$,
and therefore
$$\sum_{m\in\NN^d_0} \|e_m\|_{(\ell^{\infty}_{[\vartheta_{h_1,\alpha}]})'}<\infty, $$
since $h_3<h_1$. Now we have
\begin{multline*} 
\|I_{h_1}^{h_3}(\{a_n\}_{n\in\NN^d_0})\|_{\ell^{\infty}_{[\vartheta_{h_3,\alpha}]}}=\|\{a_n\}_{n\in\NN^d_0}\|_{\ell^{\infty}_{[\vartheta_{h_3,\alpha}]}} \\
=\sup_{m\in\NN^d_0} |a_m| e^{h_3|m|^{1/(2\alpha)}}=\sup_{m\in\NN^d_0} \Big|e_m\big(\{a_n\}_{n\in\NN^d_0}\big)\Big|  \leq \sum_{m\in\NN^d_0} \big|e_m(\{a_n\}_{n\in\NN^d_0})\big|,
\end{multline*}
for every $\{a_n\}_{n\in\NN^d_0}\in  \ell^{\infty}_{[\vartheta_{h_1,\alpha}]} (\NN_0^d)$. Then
by  \cite[Definition 3.2.3]{Pietsch} it follows that $I_{h_1}^{h_3}$ is
quasinuclear. The same is true for $I_{h_3}^{h_2}: \ell^{\infty}_{[\vartheta_{h_3,\alpha}]}(\NN_0^d)\rightarrow \ell^{\infty}_{[\vartheta_{h_2,\alpha}]}(\NN_0^d)$,
so that \eqref{eq:inclusion} is nuclear.
\end{proof}

By Proposition \ref{lm:nizovi-potapanje} and \cite[Theorem 50.1, p.511]{Tr}
it follows that $\ell_{\alpha}(\NN_0^d)$ is a $(DFN)-$space (dual of a Frech\'et nuclear space),
and that $\ell_{0,\alpha}(\NN_0^d)$ is an $(FN)-$space (Frech\'et nuclear space).

Thus $\ell_{\alpha}'(\NN_0^d)$ is a $(FN)-$space and $\ell_{0,\alpha}'(\NN_0^d)$ is an $(DFN)-$space (cf. \cite[Proposition 2.2]{SSB} and \cite[Proposition 1.1]{SSBb}).

Note that $\ds \ell_{0}(\NN_0^d)=\bigcup_{n\in\NN} E_n$,
where $E_n=\{\{a_k\}_{k\in \NN_0}, k=0\ \mbox{when}\ k>n\}$.
Since the canonical inclusion $id : E_n \rightarrow E_{n+1}$ is nuclear,
it  follows that $\ell_{0}(\NN_0^d)$ is a  $(DFN)-$space.

\subsection{Test spaces of formal Laguerre and Hermite series} \label{subsec:formal}


Following \cite{Toft1} (where the spaces of formal Hermite series expansions were considered)
by $\mathcal G_{\alpha}(\RR_{+}^d)$ and $\mathcal G_{0,\alpha}(\RR_{+}^d)$ we denote the spaces of formal Laguerre series  expansions
$f=\sum_{n\in\NN_0^d} a_{n} l_n$ that correspond to sequences $\{a_n\}_{n\in\NN^d_0} \in \ell_{\alpha/2}(\NN_0^d)$ and  $\{a_n\}_{n\in\NN^d_0} \in \ell_{0,\alpha/2}(\NN_0^d)$, $\alpha \geq 0$, respectively. By mappings
\begin{eqnarray} \label{eq:mappingT}
    T: \ell_{\alpha/2}(\NN_0^d) \rightarrow\mathcal G_{\alpha}(\RR_{+}^d), \qquad T(\{a_n\}_{n\in\NN^d_0})=\sum_{n\in\NN_0^d} a_n l_n, \\
    T: \ell_{0,\alpha/2}(\NN_0^d) \rightarrow\mathcal G_{0,\alpha}(\RR_{+}^d), \qquad T(\{a_n\}_{n\in\NN^d_0})=\sum_{n\in\NN_0^d} a_n l_n,
   \end{eqnarray}
topologies  in $\mathcal G_{\alpha}(\RR_{+}^d)$ and $\mathcal G_{0,\alpha}(\RR_{+}^d)$  are inhereted from the ones in $\ell_{\alpha/2}(\NN_0^d)$ and  $\{a_n\}_{n\in\NN^d_0} \in \ell_{0,\alpha/2}(\NN_0^d)$, $\alpha \geq 0$, respectively.


The spaces of formal Hermite series  expansions
$f=\sum_{n\in\NN_0^d} a_{n} h_n$ that correspond to the sequence $\{a_n\}_{n\in\NN^d_0} \in \ell_{\alpha}(\NN_0^d)$, or  $\{a_n\}_{n\in\NN^d_0} \in \ell_{0,\alpha}(\NN_0^d)$, $\alpha \geq 0$, are denoted by $\mathcal H_{\alpha}(\RR ^d)$ and $\mathcal H_{0,\alpha}(\RR ^d)$, respectively, and the natural mappings
\begin{eqnarray} \label{eq:mappingTHermite}
    T_h: \ell_{\alpha}(\NN_0^d) \rightarrow\mathcal H_{\alpha}(\RR ^d), \qquad T_h (\{a_n\}_{n\in\NN^d_0})=\sum_{n\in\NN_0^d} a_n h_n, \\
        T_h: \ell_{0,\alpha}(\NN_0^d) \rightarrow\mathcal H_{0,\alpha}(\RR ^d), \qquad T_h (\{a_n\}_{n\in\NN^d_0})=\sum_{n\in\NN_0^d} a_n h_n,
         \label{eq:mappingTHermite2}
                   \end{eqnarray}
induce  topologies on the target spaces.

\subsection{Pilipovi\'c and Gelfand-Shilov spaces on $\RR ^d $}
Pilipovi\'{c} spaces are defined by using the iterates of Hermite operators  as follows.

We say that  $f\in C^{\infty}(\RR^d)$ belongs to
the (global) Pilipovi\'{c} spaces of Roumieu type $\mathbfcal{S}^{\alpha} _{\alpha}(\RR^d)$
(of Beurling type $\mathbf{\Sigma} ^{\alpha} _{\alpha}(\RR^d)$), $ \alpha >0$,
if there exist $h>0$ and $C>0$ (for every $h>0$  there exists $C=C_h>0$)
such that the following estimate holds:
$$
\|H^N f\|_{L^{\infty}(\RR^d)}\leq C h^N N!^{2\alpha}.
$$

From \cite[Definition 3.1]{Toft1} and \cite[Theorem 5.2]{Toft1}
it follows that  the spaces of formal Hermite expansions
$\mathcal H_{\alpha}(\RR ^d)$ and $\mathcal H_{0,\alpha}(\RR ^d)$,
coincide with Pilipovi\'{c} spaces $\mathbfcal{S}^{\alpha} _{\alpha}(\RR^d)$ and  $\mathbf{\Sigma} ^{\alpha} _{\alpha}(\RR^d)$, $\alpha >0$, respectively.
For that reason, from now on we will use the notation
$\mathcal H_{\alpha}(\RR ^d)$ and $\mathcal H_{0,\alpha}(\RR ^d)$,
instead of  $\mathbfcal{S}^{\alpha} _{\alpha}(\RR^d)$ and  $\mathbf{\Sigma} ^{\alpha} _{\alpha}(\RR^d)$,
respectively.

Next we recall the definition of Gelfand-Shilov spaces.

Let there be given $\alpha,A>0$. We denote by
$\SSS^{\alpha,A}_{\alpha,A}(\RR^d)$  the Banach space of all
$\varphi\in C^{\infty}(\RR^d)$ such that
$$
\Vert\varphi\Vert_{\SSS^{\alpha,A}_{\alpha,A}}: =\sup_{p,k\in\NN^d_0}\frac{\left\|x^k
\partial^p\varphi(x)\right\|_{L^2(\RR^d)}}{
A^{|p+k|} k!^{\alpha} p!^{\alpha}}<\infty,
$$
and $\Vert\cdot \Vert_{\SSS^{\alpha,A}_{\alpha,A}}$ is the norm in
$\SSS^{\alpha,A}_{\alpha,A}(\RR^d)$.

The Gelfand-Shilov space (of Roumieu type)
$\SSS^{\alpha}_{\alpha}(\RR^d)$, $\alpha> 0$ (cf. \cite[Chapter]{GS2}) is defined by
$$\SSS^{\alpha}_{\alpha}(\RR^d)= \bigcup_{A>0}
\SSS^{\alpha,A}_{\alpha,A}(\RR^d),$$
and it is endowed with inductive limit topology.   $\SSS^{\alpha}_{\alpha}(\RR^d)$ is nontrivial if and only if $\alpha\geq 1/2$, cf. \cite[Chapter IV, Section 8.2]{GS2}. Note that $h_n\in\SSS^{1/2}_{1/2}(\RR^d)$, $n\in\NN^d_0$, and Gelfand-Shilov spaces obviously increase with respect to $\alpha$.

The Gelfand-Shilov distribution space of Roumieu type $(\SSS^{\alpha}_{\alpha})'(\RR^d)$ is the dual space of $\SSS^{\alpha}_{\alpha}(\RR^d)$, $\alpha > 0$, i.e. it is given by
$$(\SSS^{\alpha}_{\alpha})'(\RR^d)= \bigcap_{A>0}
(\SSS^{\alpha,A}_{\alpha,A})'(\RR^d),$$
and endowed with projective limit topology.

The spaces $\SSS^{\alpha}_{\alpha}(\RR^d)$ and $(\SSS^{\alpha}_{\alpha})'(\RR^d)$ can be characterized through Hermite expansions as follows.

\begin{prop}\label{char S and S'}
Let $\alpha\geq 1/2$. The mapping
$\SSS^{\alpha}_{\alpha}(\RR^d)\rightarrow \ell_{\alpha}(\N_0^d)$,
$f\mapsto \{\langle f,h_n\rangle\}_{n\in\NN^d_0}$, is a
topological isomorphism. Moreover,
$\sum_{n\in\NN^d_0}\langle f,h_n\rangle h_n$ converges absolutely to $f$ in $\SSS^{\alpha}_{\alpha}(\RR^d)$.

The mapping $(\SSS^{\alpha}_{\alpha})'(\RR^d)\rightarrow
\ell'_{\alpha}(\N_0^d)$, $u\mapsto \{\langle
u,h_n\rangle\}_{n\in\NN^d_0}$, is a topological isomorphism. Moreover,
$\sum_{n\in\NN^d_0}\langle
u,h_n\rangle h_n$ converges absolutely to $u$ in
$(\SSS^{\alpha}_{\alpha})'(\RR^d)$.
\end{prop}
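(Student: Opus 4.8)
The plan is to build the isomorphism from the algebraic identification of $f$ (resp. $u$) with its sequence of Hermite coefficients, and to make the topology match by estimating the Gelfand--Shilov norms $\|\cdot\|_{\SSS^{\alpha,A}_{\alpha,A}}$ against the sequence norms $\|\cdot\|_{\ell^{\infty}_{[\vartheta_{h,\alpha}]}}$ in both directions. The starting point is the well-known fact that $\{h_n\}_{n\in\NN^d_0}$ is an orthonormal basis of $L^2(\RR^d)$, so for $f\in L^2(\RR^d)$ one has $f=\sum_n\langle f,h_n\rangle h_n$ in $L^2$; since $\SSS^{\alpha}_{\alpha}(\RR^d)\hookrightarrow L^2(\RR^d)$ continuously (because $\|\varphi\|_{L^2}\le A^0\|\varphi\|_{\SSS^{\alpha,A}_{\alpha,A}}$, the $p=k=0$ term), the coefficient map is well defined and injective. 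The real content is (i) showing $f\in\SSS^{\alpha}_{\alpha}$ forces $|\langle f,h_n\rangle|\le Ce^{-h|n|^{1/(2\alpha)}}$ for some $h>0$, and (ii) conversely that such decay of the coefficients, together with the Hermite-function estimates, yields a function in $\SSS^{\alpha}_{\alpha}$ with the series converging there; absolute convergence in $\SSS^{\alpha}_{\alpha}$ and continuity of the inverse then come along for free once the norm equivalences are in place.

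For direction (i), I would use the eigenvalue relation \eqref{EigenHermit}: since $H$ is symmetric on $\SSS^{\alpha}_{\alpha}$ and $H^N h_n=(2|n|+d)^N h_n$, one gets $(2|n|+d)^N\langle f,h_n\rangle=\langle H^N f,h_n\rangle$, hence $(2|n|+d)^N|\langle f,h_n\rangle|\le\|H^N f\|_{L^2}\le\|H^N f\|_{L^\infty}\cdot(\text{const})$ — more carefully, one should bound $\|H^N f\|_{L^2}$ directly by the Gelfand--Shilov norm, expanding $H^N$ as a differential operator with polynomial coefficients of total degree $\le 2N$ and using that $\|x^k\partial^p\varphi\|_{L^2}\le A^{|p+k|}k!^\alpha p!^\alpha\|\varphi\|_{\SSS^{\alpha,A}_{\alpha,A}}$; this gives $\|H^N f\|_{L^2}\le C B^N N!^{2\alpha}$ for suitable $B$. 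Then optimizing $(2|n|+d)^N/(B^N N!^{2\alpha})$ over $N\in\NN$ (Stirling), one extracts $|\langle f,h_n\rangle|\le C e^{-h(2|n|+d)^{1/(2\alpha)}}\le C e^{-h'|n|^{1/(2\alpha)}}$, i.e. the coefficient sequence lies in $\ell^{\infty}_{[\vartheta_{h',\alpha}]}(\NN_0^d)\subset\ell_\alpha(\NN_0^d)$, with a norm bound controlled by $\|f\|_{\SSS^{\alpha,A}_{\alpha,A}}$.

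For direction (ii), given $\{a_n\}\in\ell_\alpha(\NN_0^d)$, so $|a_n|\le Ce^{-h|n|^{1/(2\alpha)}}$ for some $h>0$, I would estimate $\|x^k\partial^p(\sum_n a_n h_n)\|_{L^2}$ term by term using the standard bounds on derivatives and polynomial multiples of Hermite functions — concretely, writing $x^k\partial^p h_n$ as a finite combination of $h_m$ with $|m-n|\le|p|+|k|$ and coefficients bounded by $C^{|p+k|}(\text{poly in }n)$, or more cleanly using that the ``annihilation/creation'' operators act as $\sqrt{n_j}$-type factors; summing against the exponential decay $e^{-h|n|^{1/(2\alpha)}}$ and comparing with $A^{|p+k|}k!^\alpha p!^\alpha$ shows the series defines an element of some $\SSS^{\alpha,A}_{\alpha,A}(\RR^d)$ and converges there, with norm controlled by $\|\{a_n\}\|_{\ell^{\infty}_{[\vartheta_{h,\alpha}]}}$. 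The main obstacle is this step: getting the bookkeeping of the Hermite recursion tight enough that the factor $k!^\alpha p!^\alpha$ with the \emph{correct} power $\alpha$ emerges (not $2\alpha$ or $\alpha/2$), which forces the exponent $1/(2\alpha)$ in $\vartheta_{h,\alpha}$. This is exactly the delicate matching already carried out in \cite{Toft1} and in the literature on Hermite characterizations of Gelfand--Shilov spaces (\cite{lan,LCP,SP1988}), so I would lean on those estimates. Once both directions hold with matching norm bounds, the closed graph / open mapping theorem (or direct norm equivalence) gives that $f\mapsto\{\langle f,h_n\rangle\}$ is a topological isomorphism $\SSS^{\alpha}_{\alpha}(\RR^d)\to\ell_\alpha(\NN_0^d)$, and absolute convergence of $\sum_n\langle f,h_n\rangle h_n$ in $\SSS^{\alpha}_{\alpha}$ follows since the partial-sum tails are controlled in $\ell^{\infty}_{[\vartheta_{h,\alpha}]}$.

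For the distributional statement, I would dualize: identifying $\ell_\alpha(\NN_0^d)$ with $\SSS^{\alpha}_{\alpha}(\RR^d)$ via the first part, its strong dual is $\ell'_\alpha(\NN_0^d)=\bigcap_{h>0}\ell^{\infty}_{[1/\vartheta_{h,\alpha}]}(\NN_0^d)$ by the definition of the dual sequence space, and the transpose of the coefficient isomorphism is precisely $u\mapsto\{(u,h_n)\}_{n\in\NN^d_0}$ since $h_n\in\SSS^{1/2}_{1/2}\subset\SSS^{\alpha}_{\alpha}$ for all $\alpha\ge1/2$ and $\langle f,h_n\rangle$ is the action of $f$ (as a test function) on $h_n$. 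Transposes of topological isomorphisms between reflexive (here nuclear) spaces are topological isomorphisms, so $u\mapsto\{(u,h_n)\}$ is an isomorphism $(\SSS^{\alpha}_{\alpha})'(\RR^d)\to\ell'_\alpha(\NN_0^d)$; finally, $\sum_n(u,h_n)h_n$ converges absolutely to $u$ in $(\SSS^{\alpha}_{\alpha})'$ because the coefficient sequence lies in $\ell^{\infty}_{[1/\vartheta_{h,\alpha}]}$ for every $h$, the series converges in the corresponding weighted space for each $h$, and these are continuously included in $(\SSS^{\alpha}_{\alpha})'$ — equivalently, one tests against an arbitrary $\varphi\in\SSS^{\alpha}_{\alpha}$ whose coefficients decay like $e^{-h|n|^{1/(2\alpha)}}$ and pairs with $1/\vartheta_{h,\alpha}$ to get an absolutely summable pairing.
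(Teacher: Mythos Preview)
Your sketch is correct and follows the classical route (iterates of the Hermite operator for one direction, creation/annihilation estimates for the other, then duality), which is precisely the argument carried out in \cite{lan,LCP,SP1988,Toft1}. Note, however, that the paper does not supply its own proof of this proposition at all: it simply records the statement and attributes it to Langenbruch, writing ``Proposition \ref{char S and S'} is a special case of Theorem 3.4 and Corollary 3.5 in \cite{lan}.'' So you have in fact done more than the paper does here --- your outline reconstructs the standard proof that the paper is content to cite.

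One small comment on your direction (ii): you are right that extracting exactly $k!^{\alpha}p!^{\alpha}$ (and not a wrong power) from the Hermite-function estimates is the crux, and you defer this to the literature. That is fine and honest, but be aware that this is where the restriction $\alpha\ge 1/2$ is genuinely used; for $\alpha<1/2$ the estimate fails and one only recovers the Pilipovi\'c space $\mathcal H_\alpha(\RR^d)$, not $\SSS^\alpha_\alpha(\RR^d)$ (cf.\ Proposition \ref{char H and S}). If you were to flesh out the argument, that is the place to be careful.
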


 Proposition \ref{char S and S'} is a special case of  Theorem 3.4 and Corollary 3.5 in
 \cite{lan}.

We also consider
 Gelfand-Shilov spaces of Beurling type
$$\Sigma ^{\alpha}_{\alpha}(\RR^d)= \bigcap_{A>0}
\SSS^{\alpha,A}_{\alpha,A}(\RR^d),$$
endowed with projective limit topology, and their dual spaces
of Gelfand-Shilov distributions of Beurling type
$ (\Sigma ^{\alpha}_{\alpha})' (\RR^d)$.

The space $\Sigma ^{\alpha}_{\alpha}(\RR^d)$ is nontrivial if and only if $ \alpha > 1/2$,
and
$$
\Sigma ^{\alpha}_{\alpha}(\RR^d) \subset \SSS^{\alpha}_{\alpha}(\RR^d)
\subset \Sigma ^{\beta}_{\beta}(\RR^d) \subset \SSS^{\beta}_{\beta}(\RR^d), \qquad
\frac{1}{2} < \alpha < \beta.
$$

An analogue of Proposition \ref{char S and S'}
can be formulated for $\Sigma ^{\alpha}_{\alpha}(\RR^d)$
and $( \Sigma ^{\alpha}_{\alpha})' (\RR^d)$, $ \alpha > 1/2$,
and we leave details for the reader.

The following result is Theorem 5.2 in \cite{Toft1}, see also \cite{SP1988}.

\begin{prop}\label{char H and S}
Let $\alpha > 1/2$. Then,
$$
\mathcal H_{\alpha}(\RR ^d) = \SSS^{\alpha}_{\alpha}(\RR^d)
\quad \text{and} \quad
\mathcal H_{0,\alpha}(\RR ^d) = \Sigma ^{\alpha}_{\alpha}(\RR^d),
$$
In addition, $ \mathcal H_{1/2}(\RR ^d) = \SSS^{1/2}_{1/2}(\RR^d)$ and
$ \mathcal H_{0, 1/2}(\RR ^d) \neq \Sigma ^{1/2}_{1/2}(\RR^d) = \{ 0\}$.
\end{prop}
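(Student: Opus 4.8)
The statement to prove is Proposition~\ref{char H and S}, which identifies the formal Hermite series spaces $\mathcal H_{\alpha}(\RR^d)$ and $\mathcal H_{0,\alpha}(\RR^d)$ with the Gelfand-Shilov spaces $\SSS^{\alpha}_{\alpha}(\RR^d)$ and $\Sigma^{\alpha}_{\alpha}(\RR^d)$ for $\alpha>1/2$, together with the boundary cases at $\alpha=1/2$.

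\textbf{Plan of proof.} The two identities for $\alpha>1/2$ follow by combining two characterizations already recorded in the excerpt. On one hand, Proposition~\ref{char S and S'} (quoted from \cite{lan}) gives a topological isomorphism $\SSS^{\alpha}_{\alpha}(\RR^d)\to\ell_{\alpha}(\N_0^d)$ via $f\mapsto\{\langle f,h_n\rangle\}_n$, and the analogue for the Beurling case gives $\Sigma^{\alpha}_{\alpha}(\RR^d)\to\ell_{0,\alpha}(\N_0^d)$. On the other hand, by the very definition of $\mathcal H_{\alpha}(\RR^d)$ and $\mathcal H_{0,\alpha}(\RR^d)$ in Subsection~\ref{subsec:formal}, the mappings $T_h$ in \eqref{eq:mappingTHermite}--\eqref{eq:mappingTHermite2} are topological isomorphisms from $\ell_{\alpha}(\N_0^d)$, resp. $\ell_{0,\alpha}(\N_0^d)$, onto these formal-series spaces. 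Composing, the map sending $\{a_n\}_n$ to $\sum_n a_n h_n$ intertwines the two pictures; since for $\alpha\geq 1/2$ every element of $\SSS^{\alpha}_{\alpha}$ (resp.\ $\Sigma^{\alpha}_{\alpha}$) equals its own Hermite expansion (absolute convergence, again from Proposition~\ref{char S and S'}), the composite isomorphism is the identity on the common underlying set, so the spaces coincide as locally convex spaces. This also handles $\mathcal H_{1/2}(\RR^d)=\SSS^{1/2}_{1/2}(\RR^d)$ verbatim, since Proposition~\ref{char S and S'} is stated for all $\alpha\geq 1/2$.

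\textbf{The remaining boundary assertions.} The equality $\Sigma^{1/2}_{1/2}(\RR^d)=\{0\}$ is quoted already in the excerpt (nontriviality of $\Sigma^{\alpha}_{\alpha}$ holds iff $\alpha>1/2$), so nothing needs to be proved there. The nontrivial point is $\mathcal H_{0,1/2}(\RR^d)\neq\{0\}$. For this it suffices to exhibit one nonzero sequence in $\ell_{0,1/2}(\N_0^d)$, i.e.\ a nonzero $\{a_n\}$ with $\sup_n|a_n|e^{h|n|}<\infty$ for every $h>0$; taking a single nonzero term, e.g.\ $a_0=1$ and $a_n=0$ otherwise, trivially works, so $\mathcal H_{0,1/2}(\RR^d)$ contains $h_0\neq 0$. (More substantively, one can note that all finitely supported sequences lie in $\ell_{0,1/2}$, hence every finite linear combination of Hermite functions lies in $\mathcal H_{0,1/2}(\RR^d)$, which is therefore infinite-dimensional.) This simultaneously shows $\mathcal H_{0,1/2}(\RR^d)\neq\Sigma^{1/2}_{1/2}(\RR^d)$.

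\textbf{Main obstacle.} There is essentially no obstacle: the proposition is a bookkeeping consequence of \cite{Toft1} and \cite{lan}, and the excerpt explicitly says ``The following result is Theorem 5.2 in \cite{Toft1}.'' The only point requiring a sentence of care is the topological (not merely set-theoretic) nature of the identification --- one must observe that the Hermite coefficient map and the formal-series map $T_h$ carry the \emph{same} locally convex topologies (both are the inductive, resp.\ projective, limit topology transported from $\ell_{\alpha}$, resp.\ $\ell_{0,\alpha}$), so that the identity map between $\mathcal H_{\alpha}$ and $\SSS^{\alpha}_{\alpha}$ is bicontinuous. Beyond that, the argument is a direct citation-and-composition, and the boundary cases are immediate.
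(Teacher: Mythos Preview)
Your proposal is correct, and there is essentially nothing to compare: the paper does not prove this proposition but simply records it as a citation, writing ``The following result is Theorem 5.2 in \cite{Toft1}, see also \cite{SP1988}.'' Your write-up is therefore more detailed than the paper's treatment, since you spell out how the composition of Proposition~\ref{char S and S'} with the defining isomorphisms \eqref{eq:mappingTHermite}--\eqref{eq:mappingTHermite2} yields the identification, and you also make explicit why $\mathcal H_{0,1/2}(\RR^d)$ is nontrivial; the paper leaves all of this implicit in the reference to \cite{Toft1}.
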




Since  Pilipovi\'c spaces are non-trivial even when $
\alpha \in (0, 1/2)$, they can be considered as a
natural extension of  Gelfand-Shilov spaces when considering ultradifferentiable functions of ultra-rapid decay.

\subsection{$G$-type spaces  on $\RR^d _+$}
We end this section with the definition  of $G$-type spaces and their distribution spaces, as it is given in \cite{SSB, SSBb, SSSB}.
In addition, we recall the Laguerre series expansions in these spaces.

\par


Let $A>0$. By $g^{\alpha,A}_{\alpha,A}(\RR^d_+)$, we denote the
space of all $f\in\SSS(\RR^d_+)$ such that
\begin{equation} \label{seminorme-bez-dodatka}
\sup_{p,k\in\NN^d_0}\frac{\|x^{(p+k)/2} \partial^pf(x)\|_{L^2(\RR^d_+)}}
{A^{|p+k|}k^{(\alpha/2)k}p^{(\alpha/2)p}}<\infty.
\end{equation}
With the seminorms
\begin{equation} \label{seminorme-sa-dodatkom}
\sigma_{A,j}^{\alpha, \alpha} (f)=\sup_{p,k\in\NN^d_0}\frac{\|x^{(p+k)/2} \partial^pf(x)\|_{L^2(\RR^d_+)}}
{A^{|p+k|}k^{(\alpha/2)k}p^{(\alpha/2)p}}+\sup_{\substack{|p|\leq
j\\ |k|\leq j}} \sup_{x\in\RR^d_+}|x^k \partial ^p f(x)|,\,\, j\in\NN_0,
\end{equation}
it becomes a Frech\'et space ($(F)$-space).

The spaces $G^{\alpha}_{\alpha}(\RR^d_+)$ and
$g^{\alpha}_{\alpha}(\RR^d_+)$ are then defined as union, respectively intersection of
$g^{\alpha,A}_{\alpha,A}(\RR^d_+)$ with respect to $A$:
\begin{equation}
\label{eq:velikoimaloge}
G^{\alpha}_{\alpha}(\RR^d_+)=
\bigcup_{A>0}
g^{\alpha,A}_{\alpha,A}(\RR^d_+), \qquad
g^{\alpha}_{\alpha}(\RR^d_+)=
\bigcap_{A>0}
g^{\alpha,A}_{\alpha,A}(\RR^d_+).
\end{equation}
Then $G^{\alpha}_{\alpha}(\RR^d_+)$ and
$g^{\alpha}_{\alpha}(\RR^d_+)$ are endowed with inductive, respectively projective limit topology.
We refer to  $G^{\alpha}_{\alpha}(\RR^d_+)$, $\alpha\geq 1$, and $g^{\alpha}_{\alpha}(\RR^d_+)$, $\alpha> 1$, as  $G$-type spaces of Roumieu and Beurling type,  respectively.


\par

$G$-type distribution spaces $(G^{\alpha}_{\alpha})'(\RR^d_+)$ and
$(g^{\alpha}_{\alpha})'(\RR^d_+)$ are then given by
\begin{equation}
\label{eq:velikoimalogeprim}
(G^{\alpha}_{\alpha})'(\RR^d_+)=
\bigcap_{A>0}
(g^{\alpha,A}_{\alpha,A})'(\RR^d_+), \quad
\alpha \geq 1, \quad
(g^{\alpha}_{\alpha})'(\RR^d_+)=
\bigcup_{A>0}
(g^{\alpha,A}_{\alpha,A})'(\RR^d_+),
 \quad \alpha > 1,
\end{equation}
and they are endowed with projective, respectively, inductive limit topology.
Here $(g^{\alpha,A}_{\alpha,A})'(\RR^d_+)$ denotes the dual space of 
$g^{\alpha,A}_{\alpha,A}(\RR^d_+)$.

Next, we  summarize results from \cite[Theorem 3.6]{D1}, \cite[Theorem 5.7]{SSB} and \cite[Theorem 3.7]{SSBb} as follows:

\begin{thm} \label{D Th3.6}
Let  $f\in\SSS(\RR^d_+)$.
Then the following is true.
\begin{enumerate}
\item[$(i)$] Let $ \alpha \geq 1$. The mapping $G_\alpha^\alpha(\mathbb{R}^d_+)\rightarrow \ell _{\alpha/2}(\NN_0^d)$, $f\mapsto \{\langle f,l_n\rangle\}_{n\in\NN^d_0}$, is a
topological isomorphism. Moreover,
$\sum_{n\in\NN^d_0}\langle f,l_n\rangle l_n$ converges absolutely to $f$ in $G_\alpha^\alpha(\mathbb{R}^d_+)$.

\item[$(ii)$] Let $ \alpha > 1$. The mapping $g_\alpha^\alpha(\mathbb{R}^d_+)\rightarrow \ell _{0,\alpha/2}(\NN_0^d)$, $f\mapsto \{\langle f,l_n\rangle\}_{n\in\NN^d_0}$, is a
topological isomorphism. Moreover,
$\sum_{n\in\NN^d_0}\langle f,l_n\rangle l_n$ converges absolutely to $f$ in $g_\alpha^\alpha(\mathbb{R}^d_+)$.

\item[$(iii)$] Let $ \alpha \geq 1$. The mapping $(G_\alpha^\alpha)'(\mathbb{R}^d_+)\rightarrow \ell '_{\alpha/2}(\NN_0^d)$, $u\mapsto \{\langle u,l_n\rangle\}_{n\in\NN^d_0}$, is a
topological isomorphism. Moreover,
$\sum_{n\in\NN^d_0}\langle u,l_n\rangle l_n$ converges absolutely to $u$ in $(G_\alpha^\alpha)'(\mathbb{R}^d_+)$.

\item[$(iv)$]  Let $ \alpha > 1$. The mapping $(g_\alpha^\alpha)'(\mathbb{R}^d_+)\rightarrow \ell '_{0,\alpha/2}(\NN_0^d)$, $u\mapsto \{\langle u,l_n\rangle\}_{n\in\NN^d_0}$, is a
topological isomorphism. Moreover,
$\sum_{n\in\NN^d_0}\langle u,l_n\rangle l_n$ converges absolutely to $u$ in $(g_\alpha^\alpha)'(\mathbb{R}^d_+)$.
\end{enumerate}
\end{thm}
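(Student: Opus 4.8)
In the stated form, Theorem~\ref{D Th3.6} collects \cite[Theorem~3.6]{D1} (for $d=1$, $\alpha=1$), \cite[Theorem~5.7]{SSB} (the Roumieu statements $(i)$, $(iii)$) and \cite[Theorem~3.7]{SSBb} (the Beurling statements $(ii)$, $(iv)$), so one may simply invoke those results. If one wanted a self-contained proof, the plan would be to transport everything to the sequence spaces of Subsection~\ref{subsec:topology} through the iterates of the Laguerre operator $E$, and then pass to $(iii)$ and $(iv)$ by transposition. The starting point is the iterates characterization of the $G$-type spaces: $f\in G^{\alpha}_{\alpha}(\RR^d_+)$ (respectively $f\in g^{\alpha}_{\alpha}(\RR^d_+)$) if and only if $f\in\SSS(\RR^d_+)$ and $\sup_{N\in\NN_0}\|E^N f\|_{L^2(\RR^d_+)}/(B^N N!^{\alpha})<\infty$ for some $B>0$ (respectively for every $B>0$), and this holds with equivalence of the respective topologies. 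This amounts to a two-sided estimate between the defining quantity \eqref{seminorme-bez-dodatka}--\eqref{seminorme-sa-dodatkom} and the iterates $\|E^N f\|_{L^2}$: one inequality comes from writing $E^N$ (built from the pieces $x_j\partial_j^2+\partial_j-x_j/4+1/2$) as a controlled finite sum of terms $x^a\partial^b$ and estimating, the other from a regularity estimate recovering the individual norms $\|x^{(p+k)/2}\partial^p f\|_{L^2}$ from the iterates, which one gets via the mapping properties of $x_j^{1/2}\partial_j$ on the Laguerre basis $\{l_n\}$. I would follow the proof of \cite[Theorem~5.7]{SSB}; the auxiliary supremum seminorms in \eqref{seminorme-sa-dodatkom} cause no trouble, since all spaces in question sit inside $\SSS(\RR^d_+)$, where the Laguerre coefficient map is already the topological isomorphism onto $s$ of \cite{Sm}, so those seminorms become polynomial weights on the coefficients, negligible against the sub-exponential ones when $\alpha\ge1$.

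Next, writing $a_n=\langle f,l_n\rangle$ and using that $E$ acts symmetrically on $\SSS(\RR^d_+)$ for the $L^2(\RR^d_+)$ inner product together with $E^N l_n=|n|^N l_n$ from \eqref{Eigen}, Parseval's identity gives $\|E^N f\|_{L^2}^2=\sum_{n\in\NN_0^d}|a_n|^2|n|^{2N}$. A routine elementary lemma -- based on $\sup_N t^{2N}/(B^{2N}N!^{2\alpha})\le C_1 e^{c_1 t^{1/\alpha}}$ and, conversely, on the Taylor expansion of $e^{2h t^{1/\alpha}}$ combined with Stirling's formula -- converts ``$\sup_N\|E^N f\|_{L^2}/(B^N N!^{\alpha})<\infty$ for some $B>0$'' (respectively, for every $B>0$) into ``$\sum_n|a_n|^2 e^{2h|n|^{1/\alpha}}<\infty$ for some $h>0$'' (respectively, for every $h>0$), the quantifiers matching. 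Since $\vartheta_{h,\alpha/2}(n)=e^{h|n|^{1/\alpha}}$ and, by Remark~\ref{rem:norme}, the weighted $\ell^2$ and $\ell^{\infty}$ norms generate equivalent families, this identifies $G^{\alpha}_{\alpha}(\RR^d_+)$ with $\ell_{\alpha/2}(\NN_0^d)$ and $g^{\alpha}_{\alpha}(\RR^d_+)$ with $\ell_{0,\alpha/2}(\NN_0^d)$, with matching families of seminorms; hence $f\mapsto\{a_n\}_n$ is a topological isomorphism whose inverse is the map $T$ of \eqref{eq:mappingT}, which is $(i)$ and $(ii)$. For the asserted absolute convergence of $\sum_n a_n l_n$ it suffices, by the first paragraph, to bound the relevant seminorm of $l_n$ by $\sup_N|n|^N/(B^N N!^{\alpha})\le C_B e^{c_B|n|^{1/\alpha}}$ and to note that the rapid decay of $\{a_n\}$ outpaces this growth, so the series of seminorms converges.

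For $(iii)$ and $(iv)$ I would transpose the isomorphisms just obtained. By the remarks following Proposition~\ref{lm:nizovi-potapanje}, $G^{\alpha}_{\alpha}(\RR^d_+)\cong\ell_{\alpha/2}(\NN_0^d)$ is a $(DFN)$-space and $g^{\alpha}_{\alpha}(\RR^d_+)\cong\ell_{0,\alpha/2}(\NN_0^d)$ an $(FN)$-space, both reflexive, so the transpose of a topological isomorphism is again a topological isomorphism for the strong dual topologies, yielding $(G^{\alpha}_{\alpha})'(\RR^d_+)\cong\ell'_{\alpha/2}(\NN_0^d)$ and $(g^{\alpha}_{\alpha})'(\RR^d_+)\cong\ell'_{0,\alpha/2}(\NN_0^d)$. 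Unwinding the transpose identifies it with $u\mapsto\{\langle u,l_n\rangle\}_n$ -- the pairing being legitimate because each $l_n$ lies in the test space -- and transposing the absolutely convergent expansion valid on the predual (equivalently, using that on a nuclear space the identity map is an absolutely convergent sum of rank-one operators) yields $u=\sum_n\langle u,l_n\rangle l_n$ with absolute convergence in the respective distribution space.

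The main difficulty is the iterates characterization of the first paragraph, i.e.\ the two-sided comparison between the derivative seminorms \eqref{seminorme-bez-dodatka}--\eqref{seminorme-sa-dodatkom} and the iterates of $E$: one direction is bookkeeping on the combinatorics of $E^N$, but the regularity direction requires precise mapping estimates for the operators $x_j^{1/2}\partial_j$ (and their adjoints) on the Laguerre basis. Everything past that point is either the elementary passage between factorial bounds on the iterates and sub-exponential weights on the coefficients, or the standard functional analysis of nuclear $(F)$- and $(DF)$-spaces and their strong duals.
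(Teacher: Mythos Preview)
Your opening sentence is exactly right and matches the paper: Theorem~\ref{D Th3.6} is not proved here but is stated as a summary of \cite[Theorem~3.6]{D1}, \cite[Theorem~5.7]{SSB}, and \cite[Theorem~3.7]{SSBb}; that citation \emph{is} the paper's proof.

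Your supplementary self-contained sketch, however, runs backwards relative to both the cited sources and the paper's internal logic. The proofs in \cite{D1,SSB,SSBb} go directly from the seminorm definition \eqref{seminorme-bez-dodatka}--\eqref{seminorme-sa-dodatkom} to the sequence spaces, via sharp estimates on $x^{(p+k)/2}\partial^p l_n$ (refinements of \eqref{lagers}); the iterates of $E$ do not enter. Your plan instead first establishes the iterates characterization $G^\alpha_\alpha=\mathbf{G}^\alpha_\alpha$ and then reaches the sequence spaces by Parseval---but in this paper that identity is deduced the other way around, as a \emph{consequence} of combining the cited Theorem~\ref{D Th3.6} with the new Theorem~\ref{glavna}. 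So your route is not wrong, but it would fold the paper's main contributions (Theorems~\ref{glavna} and~\ref{glavnabeurling}) into the proof of a preliminary, and the ``regularity direction'' you correctly flag as the crux is exactly the step for which you offer no argument; the cited papers bypass it entirely by never leaving the seminorm side.
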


\par

Theorem \ref{D Th3.6} implies that
$\mathcal{G}_{0,\alpha}(\RR^d_+)=g_\alpha^\alpha(\RR^d_+)$, when $\alpha>1$
and $\mathcal{G}_\alpha(\RR^d_+)=G_\alpha^\alpha(\RR^d_+)$, when $\alpha\geq 1$.

We refer to \cite[Corollary 3.9]{D4} for the triviality of
$ G_\alpha^\alpha(\RR^d_+)$, when $\alpha < 1$,
and we end this subsection with a result on the  triviality of  $g_1 ^1 (\RR^d_+) $. Although it seems to be known, we could not find an explicit proof in the existing literature.

We first prove a technical lemma.

\begin{lem}  \label{tehnickalema}   Let  $ g^1_1(\RR^d_+) = \bigcap_{A>0}
g^{1,A}_{1,A}(\RR^d_+)$, where $g^{1,A}_{1,A}(\RR^d_+)$ is given by \eqref{seminorme-bez-dodatka} with $\alpha = 1$. Then the following are equivalent.
\begin{itemize}
\item[$(i)$] $f\in g^1_1(\RR^d_+)$.
\item[$(ii)$]  For every  $h>0$ there exists $C=C_h>0$ such that
\begin{equation}\label{gsc}
\|x^k \partial^p f(x)\|_{L^2(\RR^d_+)} \leq C h^{|p+k|} |k|!, \qquad k,p\in \NN_0^d.
\end{equation}
\item[$(iii)$]  For every  $h>0$ there exists $C=C_h>0$ such that
\begin{equation}\label{zvezda}
\sup_{x\in \RR^d_{+}} |x^k \partial^p f(x)|\leq C h^{|p+k|} |k|!, \quad k,p \in \NN_0^d.
\end{equation}
\end{itemize}

\end{lem}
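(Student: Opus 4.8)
The plan is to establish the cycle of implications $(iii)\Rightarrow(ii)\Rightarrow(i)\Rightarrow(iii)$, so that the only genuinely new content is the passage from $L^2$-estimates back to sup-estimates. The implication $(iii)\Rightarrow(ii)$ is immediate: for fixed $k,p$ one writes $\|x^k\partial^p f\|_{L^2(\RR^d_+)}^2 = \int_{\RR^d_+} |x^k\partial^p f(x)|^2\,dx$, and to make this finite one multiplies and divides by a weight, e.g. estimates $|x^k\partial^p f(x)|^2 \le |x^{k+e}\partial^p f(x)|\cdot|x^{k-e}\partial^p f(x)|$ for a suitable multi-index shift, or more simply uses $(iii)$ with $k$ replaced by $k+\mathbf{1}$ together with the integrability of $x\mapsto x^{-2}$ away from the origin and $x\mapsto x^{2m}$ near the origin — in fact the cleanest route is to bound $|x^k \partial^p f(x)| \le C_h h^{|p|+|k|}|k|!$ pointwise from $(iii)$, and then separately $|x^{k+2\mathbf 1}\partial^p f(x)|\le C_h h^{|p|+|k|+2d}(|k|+2d)!$, and interpolate so that the $L^2$-integral converges; absorbing the resulting combinatorial factors $(|k|+2d)!\le C^{|k|}|k|!$ and the powers of $h$ into new constants $C_h', h'$ (which is allowed since the estimate is required for \emph{every} $h>0$) yields \eqref{gsc}. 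The implication $(ii)\Rightarrow(i)$ is essentially a matter of unwinding definitions: \eqref{gsc} with the substitution $p\mapsto p$, $k\mapsto k$ gives, after dividing by $A^{|p+k|}k^{k/2}p^{p/2}$ and using $|k|!\le d^{|k|}\prod k_j!\le d^{|k|}k^k$ (Stirling) together with $k^{k/2}\cdot k^{k/2}=k^k$, that the seminorm \eqref{seminorme-bez-dodatka} is finite once $h$ is chosen small enough relative to $A$; since this works for all $A>0$ we land in $g^1_1(\RR^d_+)$.

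The substantive implication is $(i)\Rightarrow(iii)$, i.e. deducing a uniform pointwise bound from the $L^2$-bounds built into membership in $g^1_1(\RR^d_+)$. Here I would use a Sobolev-type embedding on the orthant: the value $x^k\partial^p f(x)$ can be recovered by integrating its derivatives, and one controls the sup-norm of a function $g$ on $\RR^d_+$ (continuous up to the boundary, rapidly decreasing) by a finite sum of $L^2$-norms of $g$ and its first-order derivatives in each variable — more precisely by something like $\|g\|_{L^\infty}^2 \lesssim \|g\|_{L^2}\cdot\|(\text{derivatives up to order }1\text{ in each variable})g\|_{L^2}$, the one-dimensional estimate $|g(x)|^2 = -\int_x^\infty \frac{d}{dt}|g(t)|^2\,dt \le 2\|g\|_{L^2(\RR_+)}\|g'\|_{L^2(\RR_+)}$ applied coordinate by coordinate. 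Applying this with $g = x^k\partial^p f$ produces, after differentiating the product $x^k\partial^p f$ by the Leibniz rule, a finite sum of terms of the form $x^{k'}\partial^{p'}f$ with $|k'|\le |k|+d$ and $|p'|\le|p|+d$, each of which is controlled in $L^2$ by \eqref{gsc} (which we already have from $(i)$ once $(i)\Rightarrow(ii)$ is observed — but to keep the cycle clean I would instead prove $(i)\Rightarrow(ii)$ directly, which is standard and already implicit in the $G$-type space literature, or simply note $(i)\Rightarrow(ii)$ follows by a Sobolev argument as well). The combinatorial bookkeeping — the Leibniz-rule factors $\binom{k}{k'}$, the shift of the factorial from $|k|!$ to $(|k|+d)!$, and the loss of a bounded power of $h$ — is all absorbable into new constants, again exploiting that the target estimate is quantified over all $h>0$.

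The main obstacle I anticipate is purely technical rather than conceptual: making the Sobolev embedding in the $(i)\Rightarrow(iii)$ step genuinely rigorous on the \emph{open} orthant $\RR^d_+$, where one must know a priori that $x^k\partial^p f$ and its relevant derivatives are continuous up to $\overline{\RR^d_+}$ and decay at infinity so that the fundamental-theorem-of-calculus integrals converge and the boundary terms at $x_j\to 0^+$ and $x_j\to\infty$ vanish — this is exactly where membership in $\SSS(\RR^d_+)$ (built into the definition of $g^1_1(\RR^d_+)$) is used. A secondary bookkeeping point is ensuring that the factorial $|k|!$ on the right-hand side of \eqref{zvezda} and \eqref{gsc} — as opposed to, say, $k!=\prod k_j!$ or $|k|!^{\text{something}}$ — is the one that survives all the manipulations; since $|k|!$ and $d^{|k|}\prod_j k_j!$ differ by a geometric factor this causes no real trouble, but it should be tracked carefully, and the interpolation constant from the one-dimensional Sobolev inequality must be confirmed to be independent of $k$ and $p$.
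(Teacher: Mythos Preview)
Your Sobolev/fundamental-theorem-of-calculus argument for passing from $L^2$ bounds to sup bounds is essentially what the paper does for $(ii)\Rightarrow(iii)$, and your $(iii)\Rightarrow(ii)$ is fine. However, you have misread the defining seminorm \eqref{seminorme-bez-dodatka}: it involves $\|x^{(p+k)/2}\partial^p f\|_{L^2}$, with a \emph{half-integer} power of $x$, not $\|x^k\partial^p f\|_{L^2}$. This is not a typo but the structural feature of $G$-type spaces on the orthant. Because of it, the equivalence $(i)\Leftrightarrow(ii)$ is \emph{not} ``unwinding definitions'': one must convert between the family $\{\|x^{(p+k)/2}\partial^p f\|_{L^2}\}_{p,k}$ and the family $\{\|x^{k}\partial^p f\|_{L^2}\}_{p,k}$, and a naive substitution $(p+k)/2\mapsto k'$ only covers the range $2k'\ge p$. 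Your sketch of $(ii)\Rightarrow(i)$ --- dividing \eqref{gsc} by $A^{|p+k|}k^{k/2}p^{p/2}$ --- therefore does not produce the seminorm in \eqref{seminorme-bez-dodatka} at all.

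The actual content of the lemma is $(i)\Rightarrow(ii)$, and the paper handles it with an integration-by-parts inequality from Duran \cite[Theorem 2.4]{D4}:
\[
\|x^k f^{(p)}\|_2^2 \;\le\; \frac{(2k)!}{(2k+2p)!}\sum_{m=0}^{2p}\binom{2p}{m}\,\|x^{\frac{p+m+2k}{2}}f^{(p+m)}\|_2\;\|x^{\frac{3p-m+2k}{2}}f^{(3p-m)}\|_2,
\]
which expresses the integer-power norm entirely in terms of the half-integer-power norms that $(i)$ controls; the prefactor $(2k)!/(2k+2p)!$ is precisely what makes the factorials close up to $|k|!$. Your proposal does not supply any mechanism for this step (you defer it to the literature inside your $(i)\Rightarrow(iii)$ argument, which itself secretly factors through $(ii)$, so your cycle does not close). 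For $(ii)\Rightarrow(i)$ the paper applies \eqref{gsc} with $k$ replaced by $(p+k)/2$ and then uses $(p+k)!\le 2^{p+k}p!\,k!$ together with Stirling to compare $((p+k)/2)^{(p+k)/2}$ with $\sqrt{p!}\sqrt{k!}$; this is the calculation your sketch should contain in place of the identity $k^{k/2}\cdot k^{k/2}=k^k$, which is not the relevant one here.
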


\begin{proof}
We consider the case $d=1$ for the sake of simplicity.

$(i) \Rightarrow (ii)$ Let there be given $k,p\in \NN_0$.
By using the estimates similar to the ones given in the proof of \cite[Theorem 2.4]{D4}
we obtain
$$\|x^k  f^{(p)}(x)\|^2_2 \leq \frac{(2k)!}{(2k+2p)!} \sum_{m=0}^{2p}{2p\choose m} \|x^{\frac{p+m+2k}{2}}f^{(p+m)}(x)\|_{L^2(\RR^d_+)} \cdot \|x^{\frac{3p-m+2k}{2}} f^{(3p-m)}(x)\|_{L^2(\RR^d_+)}
$$
$$\leq \frac{1}{(2p)!} C^2 A^{4(p+k)} (2k)!
\cdot \sum_{m=0}^{2p}{2p\choose m} \big((p+m)!  (3p-m)! \big)^{1/2}$$
$$\leq \frac{\sqrt{(4p)!}}{(2p)!} C^2 A^{4 (p+k)} 2^{4k} k!^2 
\leq \big( C (4 A^2)^{(p+k)} k! \big)^2
$$
where $A>0$ can be chosen arbitrary since $f\in g^1_1(\RR_+)$,
and $C>0$ depends on $A$.
Thus, \eqref{gsc} holds.

$(ii) \Rightarrow (iii)$ We have
$\ds x^k f^{(p)}(x)=\int_{-\infty}^{x} \left(  t^k f^{(p)}(t)\right)' dt$,
$x\in \RR_{+}$,  for any  $k,p\in \NN_0$ so that
\begin{multline*}
\left| x^k f^{(p)}(x)\right|\leq  k \left|\int_{-\infty}^{x} t^{k-1} f^{(p)}(t) dt\right|+ \left|\int_{-\infty}^{x} t^{k} f^{(p+1)}(t) dt\right| \\
\leq k\left(\int_{\RR} \frac{dt}{(1+|t|)^2}\right)^{1/2}\cdot \left(\int_{\RR} {(1+|t|)^{2k}}\left(f^{(p)}(t)\right)^2\right)^{1/2} \\
+ \left( \int_{\RR} \frac{dt}{(1+|t|)^2}  \right)^{1/2}\cdot \left( (1+|t|)^{2k+2} \left(f^{(p+1)}(t)\right)^2 \right)^{1/2}  \\
\leq C_1 \left(  k\| x^{k} f^{(p)}(x)\|_2 +\| x^{k+1} f^{(p+1)}(x)\|_2 \right),  \quad x\in \RR^{+},
\end{multline*}
for a suitable $C_1>0$. Thus  (\ref{gsc}) implies
(\ref{zvezda}).

The implication $(iii) \Rightarrow (ii)$  is obvious.

$(ii) \Rightarrow (i)$ We slightly modify the proof of  \cite[Theorem 2.4 c)]{D4}. 
Assume that \eqref{gsc} holds. Then for any given  $k,p\in \NN_0$ we have
$$\|x^{\frac{p+k}{2}}  f^{(p)}(x)\|^2_2 \leq C h_1 ^{\frac{p+k}{2} +p} (\frac{p+k}{2})^{\frac{p+k}{2}}
\leq
C h_1^{\frac{3p}{2} + \frac{k}{2}} 2^{-\frac{p+k}{2}}
(p+k)^{\frac{p+k}{2}}
\leq C h_1^{\frac{3p}{2} + \frac{k}{2}} (2e)^{\frac{p+k}{2}}
\sqrt{p!} \sqrt{k!},
$$
for any $h_1>0$. Let there be given $h>0$. Then, by choosing $ l < \min\{\frac{h^2}{2e}, 1\}$
we obtain
$$\|x^{\frac{p+k}{2}}  f^{(p)}(x)\|^2_2 \leq C h^{p+q} \sqrt{p!} \sqrt{k!},
$$
where $C>0$  depends on $h$. Thus $f \in  g^1_1(\RR^d_+)$.
\end{proof}

    Now we are able to prove the following.

    \begin{thm} \label{triviality}
The space $g^\alpha _\alpha (\RR^d_{+})$ is trivial,  i.e. $g^\alpha _\alpha (\RR^d_{+})=\{0\}$,
for every $ 0< \alpha \leq 1$.
    \end{thm}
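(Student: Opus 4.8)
For $0<\alpha<1$ there is nothing to do: for any fixed $A_0>0$,
$g^{\alpha}_{\alpha}(\RR^d_+)=\bigcap_{A>0}g^{\alpha,A}_{\alpha,A}(\RR^d_+)\subseteq g^{\alpha,A_0}_{\alpha,A_0}(\RR^d_+)\subseteq\bigcup_{A>0}g^{\alpha,A}_{\alpha,A}(\RR^d_+)=G^{\alpha}_{\alpha}(\RR^d_+)$,
and $G^{\alpha}_{\alpha}(\RR^d_+)=\{0\}$ for $\alpha<1$ by \cite[Corollary 3.9]{D4}. So the whole content of the theorem is the borderline case $\alpha=1$, i.e.\ $g^{1}_{1}(\RR^d_+)=\{0\}$, and it is here that Lemma \ref{tehnickalema} enters.

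For $\alpha=1$ the idea is to transport the problem to $\RR^d$ by the componentwise substitution $x_j=t_j^2$ and then invoke the classical triviality $\Sigma^{1/2}_{1/2}(\RR^d)=\{0\}$ recorded in Proposition \ref{char H and S}. Given $f\in g^{1}_{1}(\RR^d_+)$, put $\tilde f(t):=f(t_1^2,\dots,t_d^2)$; since $f\in\SSS(\RR^d_+)$ extends smoothly to $\overline{\RR^d_+}$ and $t\mapsto(t_1^2,\dots,t_d^2)$ is smooth, $\tilde f\in C^\infty(\RR^d)$ and $\tilde f$ is even in each variable. I claim $\tilde f\in\Sigma^{1/2}_{1/2}(\RR^d)$; granting the claim, $\tilde f\equiv0$, hence $f\equiv0$ and the theorem follows.

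To prove the claim I would argue in the following steps (writing the estimates for $d=1$; the general case is obtained coordinatewise). First, Lemma \ref{tehnickalema}$(iii)$ gives, for every $h>0$, a constant $C_h>0$ with $|x^k\partial^p f(x)|\le C_h h^{k+p}k!$ for all $k,p\in\NN_0$; minimizing in $k$ turns this into the decay estimate $|\partial^p f(x)|\le C_h h^{p}e^{-x/h}$ on $\RR_+$, valid for every $h>0$. Second, the Fa\`a di Bruno formula gives $\partial_t^{m}\tilde f(t)=\sum_{\lceil m/2\rceil\le j'\le m}c_{m,j'}\,t^{\,2j'-m}(\partial^{j'}f)(t^2)$ with $c_{m,j'}\ge0$, together with the identity $\sum_{j'}c_{m,j'}h^{j'}|t|^{2j'-m}=h^{m/2}\widetilde H_m(\sqrt h\,|t|)$, where $\widetilde H_m(r):=|H_m(ir)|=\sum_{k}\tfrac{m!\,(2r)^{m-2k}}{k!\,(m-2k)!}$. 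Feeding the first estimate into this formula yields $|\partial_t^m\tilde f(t)|\le C_h\,h^{m/2}\,\widetilde H_m(\sqrt h\,|t|)\,e^{-t^2/h}$; splitting $e^{-t^2/h}$ and using the bound $\sup_{r\ge0}\widetilde H_m(r)e^{-r^2/2}\le C^{m}\sqrt{m!}$ gives $|\partial_t^m\tilde f(t)|\le C_h'\,(C\sqrt h)^{m}\sqrt{m!}\,e^{-t^2/(2h)}$ for every $h\in(0,1)$. Finally, given any $H>0$, choosing $h$ small enough that $C\sqrt h\le H$ and $1/(2h)\ge H$ yields $|\partial_t^m\tilde f(t)|\le C_H\,H^{m}\sqrt{m!}\,e^{-Ht^2}$, which is precisely the (Beurling) estimate characterizing membership in $\Sigma^{1/2}_{1/2}(\RR)$.

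The main obstacle I expect is the second step: identifying the Fa\`a di Bruno coefficients with (absolute values of) Hermite polynomial coefficients and controlling $\sup_{r\ge0}\widetilde H_m(r)e^{-r^2/2}$ by $C^m\sqrt{m!}$ — a naive term-by-term estimate only produces the useless bound $m!$, so one must locate the actual maximizer (which lies below $r=\sqrt m$) or quote a Cram\'er-type estimate for $|H_m(iy)|e^{-y^2/2}$. The remaining bookkeeping — the coordinatewise passage to $\RR^d$, and the conversion between the factorial $|k|!$ in Lemma \ref{tehnickalema} and the multi-index factorials in the norm of $\Sigma^{1/2}_{1/2}(\RR^d)$ — is routine. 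Alternatively, one can bypass Gelfand--Shilov spaces: Lemma \ref{tehnickalema}$(iii)$ with $p=0$ shows that $f$ extends to an entire function $F$ of minimal exponential type, and the decay above shows that $\Phi(z):=F(z^2)$ decays faster than $e^{-\epsilon|z|^2}$ on the whole real axis for every $\epsilon>0$; trigonometric convexity of the order-$2$ indicator of $\Phi$ then forces $\Phi$ to decay in every direction, so $\Phi$ is bounded, hence constant by Liouville, hence $\Phi\equiv0$, i.e.\ $F\equiv0$ and $f\equiv0$.
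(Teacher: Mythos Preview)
Your argument is correct but takes a genuinely different route from the paper. The paper's proof is essentially a two-liner: after reducing to $\alpha=1$ and $d=1$ exactly as you do, it observes that Lemma~\ref{tehnickalema} yields \eqref{zvezda} for arbitrarily small $h>0$, in particular for some $h<1/\sqrt{e}$, and then quotes \cite[Theorem~5.2]{D3}, which asserts that \eqref{zvezda} with such an $h$ already forces $f\equiv 0$; the case $d\ge 2$ is deferred to the kernel theorem.

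Your main argument instead transports the problem to $\RR^d$ via $x_j=t_j^2$ and shows $\tilde f\in\Sigma^{1/2}_{1/2}(\RR^d)=\{0\}$ by combining Fa\`a di Bruno with the Cram\'er-type bound $\sup_{r\ge 0}|H_m(ir)|e^{-r^2/2}\le C^m\sqrt{m!}$ (readily obtained from the generating function $\sum_m \widetilde H_m(r)s^m/m!=e^{2sr+s^2}$ and a Cauchy estimate with $|s|\asymp\sqrt{m}$). This is longer but more self-contained: it trades the black-box citation of Duran for the classical triviality of $\Sigma^{1/2}_{1/2}$ recorded in Proposition~\ref{char H and S}, and it is in perfect harmony with Section~\ref{sec5}, where precisely this change of variables is used to link $P$-spaces on $\RR^d_+$ with Pilipovi\'c spaces on $\RR^d$. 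Your alternative entire-function route (minimal-type $F$, then $\Phi(z)=F(z^2)$ and Liouville via the order-$2$ indicator) also works, since $h_\Phi\le 0$ everywhere while $h_\Phi(0)=h_\Phi(\pi)=-\infty$ and $2$-trigonometric convexity then propagates $h_\Phi\equiv -\infty$; this step, however, deserves a sentence more of justification than your one-line sketch gives it.
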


\begin{proof} It is enough to prove $g^1 _1 (\RR^d_{+})=\{0\}$.
    We give the proof for $d=1$. For $d\geq 2$ we may use e.g. the kernel theorem, 
    \cite[Theorem 4.4]{SSBb}.

It is enough to verify that we can choose $h>0$ in \eqref{zvezda} such that $\displaystyle \frac{1}{\sqrt{e}}$,
because then  \cite[Theorem 5.2]{D3} implies that $f \equiv 0$ if in \eqref{zvezda} $h>0$ can be chosen so that $h < \displaystyle \frac{1}{\sqrt{e}}$. Indeed, Lemma \ref{tehnickalema} gives that if $f \in g^1 _1 (\RR^d_{+})$ then
we can choose $h>0$  arbitrary small  in  \eqref{zvezda}. Thus, we conclude that $f \equiv 0.$
\end{proof}

\section{Pilipovi\'{c} spaces on $\RR^d_+$} \label{sec3}

We start with a simple observation.

\begin{lem} \label{lm:S+}
Let $f\in  C^{\infty}(\RR^d_{+})$  such that $E^N f\in L^2(\RR_{+}^d)$ for every $N\in\NN_0$. Then $f\in \mathcal S(\RR_{+}^d)$.
\end{lem}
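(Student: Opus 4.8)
The plan is to show that $f \in \mathcal{S}(\RR^d_+)$ by relating the iterates of the Laguerre operator $E$ to the weighted $L^2$-norms $\|x^\beta \partial^\alpha f\|_{L^2(\RR^d_+)}$, and then invoking the characterization of $\mathcal{S}(\RR^d_+)$ via the isomorphism $\iota : \mathcal{S}(\RR^d_+) \to s$ with the space $s$ of rapidly decreasing sequences. The cleanest route is through the Laguerre coefficients: by \eqref{eq:Laguerrecoeff} and \eqref{Eigen}, for $f$ with $E^N f \in L^2(\RR^d_+)$ we have
\begin{equation*}
a_n(E^N f) = \langle E^N f, l_n \rangle = \langle f, E^N l_n \rangle = |n|^N a_n(f),
\end{equation*}
using self-adjointness of $E$ on the relevant domain (here one must justify that the integration by parts producing $\langle f, E^N l_n\rangle$ is valid — the boundary terms vanish because the $l_n$ decay exponentially and $f$ together with its derivatives is controlled near $0$ and $\infty$ by the $L^2$ hypotheses on $E^N f$, or one works on $C_c^\infty$ exhaustions and passes to the limit). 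Then $\sum_n |n|^{2N} |a_n(f)|^2 = \|E^N f\|_{L^2}^2 < \infty$ for every $N$, which by Cauchy--Schwarz (splitting off a factor $|n|^{-(d+1)}$ worth of decay) gives $\{a_n(f)\}_n \in s$, i.e. $\sum_n |a_n(f)| |n|^M < \infty$ for every $M$.

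Having $\{a_n(f)\}_n \in s$, I would then apply the topological isomorphism $\iota : \mathcal{S}(\RR^d_+) \to s$ from \cite[Theorem 3.1]{Sm}: the sequence $\{a_n(f)\}_n$ is the image under $\iota$ of some $g \in \mathcal{S}(\RR^d_+)$, and since the Laguerre functions form an orthonormal basis of $L^2(\RR^d_+)$, the Laguerre coefficients determine an $L^2$-function uniquely, so $f = g$ (as elements of $L^2$, hence a.e., hence everywhere by continuity). Therefore $f \in \mathcal{S}(\RR^d_+)$.

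The main obstacle I anticipate is the self-adjointness / integration-by-parts step: one needs to know that $\langle E^N f, l_n\rangle = |n|^N \langle f, l_n\rangle$ genuinely holds for the given $f$, which is only assumed to be smooth on $\RR^d_+$ with $E^N f \in L^2$ — not a priori in a nice core for $E$. The standard fix is to note that the hypotheses $E^N f \in L^2$ for \emph{all} $N$ already force enough decay: one shows inductively that $f$ and its derivatives lie in suitable weighted $L^2$ spaces, so that all boundary contributions at $0$ and $\infty$ in the integration by parts vanish, or alternatively one invokes that $E$ with domain $\{u : E^N u \in L^2 \ \forall N\}$ is essentially self-adjoint with the $l_n$ spanning its eigenspaces (a known fact about the Laguerre operator, e.g. \cite{Thang}). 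A secondary, more routine point is the Cauchy--Schwarz estimate converting $\sum_n |n|^{2N}|a_n(f)|^2 < \infty$ for all $N$ into $\{a_n(f)\}_n \in s$; this is immediate once one absorbs a fixed polynomial decay factor using convergence of $\sum_n |n|^{-d-1}$ over $\NN_0^d$ (with the usual care that the $n=0$ term is handled separately).
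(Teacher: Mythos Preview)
Your approach is essentially the same as the paper's: both show that $\{a_n(f)\}\in s$ from $\|E^N f\|_{L^2}^2=\sum_n |n|^{2N}|a_n(f)|^2<\infty$ for all $N$, and then conclude $f\in\mathcal S(\RR^d_+)$. The only cosmetic difference is in the final step: the paper invokes the pointwise bound $\sup_{x}|x^k\partial^p l_n(x)|\le C_{p,k}(1+|n|)^{|p+k|}$ from \cite{Sm} to estimate $|x^k\partial^p f|$ directly by termwise differentiation of the Laguerre series, whereas you appeal to the isomorphism $\iota:\mathcal S(\RR^d_+)\to s$ (which is itself proved in \cite{Sm} via that same estimate) and then identify $f$ with its image. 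Your explicit flagging of the integration-by-parts/self-adjointness issue is well placed --- the paper's line ``By \eqref{Eigen} it follows that $E^N f=\sum_n |n|^N a_n(f)l_n$'' tacitly uses exactly this, so the justification you sketch is needed in both versions.
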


\begin{proof}
Let us consider the  Laguerre expansion of $f$, namely
$f=\sum_{n\in\NN^d_0} a_n(f) l_n$.
By (\ref{Eigen}) it follows that
$E^N f = \sum_{n\in\NN^d_{0}} |n|^N a_n(f) l_n$. Since   $E^N f\in L^2(\RR_{+}^d)$,
we get $\sum_{n\in\NN^d_0} (a_n(f))^2 |n|^{2N} $ $<\infty$, so that
for every $N\in\NN_0$ there exists $C_N>0$ such that $\ds |a_n(f)|\leq \frac{C_N} {(1+|n|)^{N}}$.

Let $k,p\in\NN_0^d$. By using the estimate
\begin{equation}\label{lagers}
\sup_{x\in\RR^d_{+}}|x^k \partial^p l_n(x)|\leq C_{p,k} (1+|n|)^{|p+k|}
\end{equation}
(see (3.1) in \cite{Sm}) we have
$$
|x^k \partial^p f(x)|\leq \sum_{n\in\NN^d_0} |a_n(f)|\cdot |x^k \partial^p l_n(x)|\leq C_{p,k} C_N\sum_{n\in\NN^d_0}\frac{(1+|n|)^{|p+k|}}{(1+|n|)^N}, \quad x\in\RR^d_{+},
$$
for any $N\in\NN_0$.
Since $N$ can be chosen arbitrary large, it follows that
$ \ds \sup_{x \in \RR^d_{+}}|x^k \partial^p f(x)| <\infty,$
so that $f\in \mathcal S(\RR^d_{+})$.
\end{proof}


We proceed with the definition of  $P-$spaces on positive orthants.

\begin{defn} \label{def:PilipovicOrthant}
Let $h>0$, and $\alpha>0$. Then a smooth functions $ f \in C^{\infty} (\RR^d_+)$ belongs to the Banach space $ \mathbf G^{\alpha,h}_{\alpha,h}(\RR^d_+)$ with the norm $\eta_h^\alpha$  given by
\begin{equation}\label{eta}
\eta_h^\alpha(f) : =\sup_{N\in\NN}\frac{\Vert E^N f\Vert_{L^2(\RR^d_+)}}{h^{N} N!^\alpha}<\infty.
\end{equation}

The Pilipovi\'c spaces ($P-$spaces) over $\RR^d_+$, $\mathbf{G}^{\alpha}_{\alpha}(\RR^d_+)$ and  $\mathbf{g}^{\alpha}_{\alpha}(\RR^d_+)$ of Roumieu and Beurling type respectively, are defined by
\begin{equation*}
 \mathbf G^{\alpha}_{\alpha}(\RR^d_+)=
\bigcup_{h>0}  \mathbf
G^{\alpha,h}_{\alpha,h}(\RR^d_+), \qquad \text{and} \qquad
\mathbf{g}^{\alpha}_{\alpha}(\RR^d_+)=
\bigcap_{h>0}  \mathbf
G^{\alpha,h}_{\alpha,h}(\RR^d_+).
\end{equation*}
\end{defn}
The spaces $ \mathbf G^{\alpha}_{\alpha}(\RR^d_+)$ and
$ \mathbf g^{\alpha}_{\alpha}(\RR^d_+)$ are endowed with inductive, respectively projective limit topology.

By Lemma \ref{lm:S+} it follows that  $\mathbf G^{\alpha,h}_{\alpha,h}(\RR^d_+) \subset \mathcal S(\RR^d_{+})$, and
$$
\eta_h^\alpha(f) < \infty \quad \Longleftrightarrow \quad
\eta_h^\alpha(f) +  \sup_{\substack{|p|\leq
j\\ |k|\leq j}} \sup_{x\in\RR^d_+}|x^k  \partial^p f(x)| < \infty,\,\, j\in\NN_0.
$$
Clearly, if $0< \alpha<\beta$, $\mathbf G^{\alpha,h}_{\alpha,h}(\RR^d_+)$ is continuously injected into $\mathbf G^{\beta,h}_{\beta,h}(\RR^d_+)$.

The space  $\mathbf G^{\alpha,h}_{\alpha,h}(\RR^d_+)$, $ \alpha>0$, $h>0$, is  non-trivial. Indeed, Laguerre functions $l_n$,  $n\in\NN^d_0$,  satisfy (\ref{eta}) for every $h>0$
since
$$\sup_{N\in\NN}\frac{\Vert E^N l_n\Vert_{L^2(\RR^d_+)}}{h^{N} N!^{\alpha}}=\sup_{N\in\NN}\frac{(\frac{|n|}{h})^N}{N!^{\alpha}}<\infty,$$
so that  $\mathbf G^{\alpha,h}_{\alpha,h}(\RR^d_+)$ contains finite linear combinations of
Laguerre functions.

$\mathbf G^{\alpha}_{\alpha}(\RR^d_+)$ and $\mathbf{g}^{\alpha}_{\alpha}(\RR^d_+)$ are continuously injected into $\mathcal S(\RR^d_+)$, and
the following embeddings are obvious:
\begin{equation}\label{poredak}
\mathbf{g}^{\alpha}_{\alpha}(\RR^d_+)\subset \mathbf{G}^{\alpha}_{\alpha}(\RR^d_+)\subset  \mathbf{g}^{\beta}_{\beta}(\RR^d_+) \subset \mathbf{G}^{\beta}_{\beta}(\RR^d_+),
\qquad 0<\alpha < \beta.
\end{equation}

\par

The corresponding dual spaces are denoted by $(\mathbf{G}^{\alpha}_{\alpha})'(\RR^d_+)$ and  $(\mathbf{g}^{\alpha}_{\alpha})'(\RR^d_+)$.

The main result of this section is to characterize $P-$spaces and their dual spaces on positive orthants in the spirit of Theorem  \ref{D Th3.6}.

\par

We will often utilize the following estimates which follow from e.g. (3) in \cite[Chapter IV, Subsection 2.1]{GS2}. 
For any given $\alpha > 0$ there  exist positive constants  $C_1,C_2, A$, and $ B$ such that
\begin{equation}\label{GS2}
C_1 e^{A s^{1/\alpha}}\leq \sup_{n\in\mathbb N_0}\frac{s^n}{n!^\alpha}\leq C_2 e^{B s^{1/\alpha}}, \qquad
 s\geq 0.
\end{equation}

\begin{thm}\label{glavna} Let $\alpha > 0$.
\begin{itemize}
\item[i)] If   $f\in \mathcal S(\mathbb R_+^d)$ and $\{a_n(f)\}_{n\in\NN^d_0}\in \ell_{\alpha/2} (\NN_0^d)$, then $
\quad f \in \mathbf{G}^{\alpha}_{\alpha}(\RR^d_+).$
\item[ii)] If  $ f \in \mathbf{G}^{\alpha}_{\alpha}(\RR^d_+)$, then $\{a_n(f)\}_{n\in\NN^d_0}\in  \ell_{\alpha/2} (\NN_0^d)$.
\item[iii)]  If $a_n\in \ell_{\alpha/2}'(\NN_0 ^d)$ then  the series
$ \displaystyle \sum_{n\in\NN^d_{0}}a_n l_n$ converges in the topology of
$(\mathbf{G}^{\alpha}_{\alpha})' (\RR^d_+)$,
thus defining an element $ f \in (\mathbf{G}^{\alpha}_{\alpha})' (\RR^d_+).$ Moreover, $ a_n = a_n (f)$, $ n \in \NN^d_{0}$.
\item[iv)]  Let $ f \in (\mathbf{G}^{\alpha}_{\alpha})'(\RR^d_+).$
Then for the sequence of its Fourier-Laguerre coefficients we have $a_n(f)
\in \ell'_{\alpha/2}(\NN^d_{0}).$
\end{itemize}
\end{thm}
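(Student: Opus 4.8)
The plan is to translate everything into the language of Laguerre coefficients and then invoke the sequence-space machinery already developed, in particular the spaces $\ell_{\alpha/2}(\NN_0^d)$, $\ell_{0,\alpha/2}(\NN_0^d)$ and their duals, together with the estimate \eqref{GS2} and the eigenvalue relation \eqref{Eigen}.

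\textbf{Part ii).} Let $f\in\mathbf{G}^{\alpha}_{\alpha}(\RR^d_+)$, so $f\in\mathbf G^{\alpha,h}_{\alpha,h}(\RR^d_+)$ for some $h>0$, i.e.\ $\|E^Nf\|_{L^2}\leq C h^N N!^{\alpha}$ for all $N$. By Lemma \ref{lm:S+} we have $f\in\mathcal S(\RR^d_+)$, so the Laguerre expansion $f=\sum_n a_n(f)l_n$ converges in $L^2$ and, by \eqref{Eigen}, $E^Nf=\sum_n |n|^N a_n(f)\,l_n$. Parseval's identity gives $\sum_n |a_n(f)|^2|n|^{2N}=\|E^Nf\|_{L^2}^2\leq C^2 h^{2N}N!^{2\alpha}$, hence for each fixed $n$, $|a_n(f)|\,|n|^N\leq C h^N N!^{\alpha}$ for all $N$. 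Dividing and taking the supremum over $N$, then applying the left-hand inequality in \eqref{GS2} with $s=|n|^2/h^{2/\alpha}$ (or equivalently with a rescaled variable), yields $|a_n(f)|\leq C' e^{-A'|n|^{1/\alpha}}$ for suitable $A'>0$; more precisely, one gets a bound of the form $|a_n(f)|\le C\inf_N h^N N!^{\alpha}/|n|^N$, and \eqref{GS2} converts this infimum into an exponential decay $e^{-c|n|^{1/\alpha}}=e^{-c(|n|^{1/2})^{2/\alpha}}$. Writing $\beta=\alpha/2$ and noting $|n|^{1/\alpha}=(|n|^{1/(2\beta)})^{1}$ up to constants, this is exactly the statement that $\{a_n(f)\}\in\ell^{\infty}_{[\vartheta_{c,\alpha/2}]}(\NN_0^d)$ for some $c>0$, i.e.\ $\{a_n(f)\}\in\ell_{\alpha/2}(\NN_0^d)$. (The bookkeeping of exactly which power $1/(2\alpha)$ versus $1/\alpha$ appears, and matching it to the index $\alpha/2$ in $\vartheta_{h,\alpha/2}$, is the one genuinely delicate point, but it is forced by comparing \eqref{GS2} with the definition of $\vartheta_{h,\alpha}$.)

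\textbf{Part i).} Conversely, suppose $f\in\mathcal S(\RR^d_+)$ with $\{a_n(f)\}\in\ell_{\alpha/2}(\NN_0^d)$, say $|a_n(f)|\le C e^{-h_0|n|^{1/\alpha}}$ for some $h_0>0$ (again up to the index-matching noted above). The expansion $f=\sum_n a_n(f)l_n$ then converges in $L^2$, and by \eqref{Eigen}, $\|E^Nf\|_{L^2}^2=\sum_n |a_n(f)|^2|n|^{2N}\le C^2\sum_n e^{-2h_0|n|^{1/\alpha}}|n|^{2N}$. Bounding $|n|^{2N}e^{-h_0|n|^{1/\alpha}}\le\sup_{s\ge 0}s^{2N}e^{-h_0 s^{1/\alpha}}$ and using the right-hand inequality in \eqref{GS2} (in the form $\sup_s s^M e^{-h_0 s^{1/\alpha}}\le C'' (h')^{M}M!^{\alpha}$ for an appropriate $h'$), together with the fact that $\sum_n e^{-h_0|n|^{1/\alpha}}<\infty$, gives $\|E^Nf\|_{L^2}\le C_1 h_1^N N!^{\alpha}$ for suitable constants; since some $h_1>0$ works, $f\in\mathbf G^{\alpha,h_1}_{\alpha,h_1}(\RR^d_+)\subset\mathbf G^{\alpha}_{\alpha}(\RR^d_+)$.

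\textbf{Parts iii) and iv): duality.} Here I would use that $\{l_n\}$ is an orthonormal system, that $\mathbf G^{\alpha}_{\alpha}(\RR^d_+)$ contains all finite linear combinations of Laguerre functions, and the known duality $\ell_{\alpha/2}(\NN_0^d)'=\ell'_{\alpha/2}(\NN_0^d)$ (pairing $\langle\{a_n\},\{b_n\}\rangle=\sum a_nb_n$). For iii), given $\{a_n\}\in\ell'_{\alpha/2}(\NN_0^d)$, i.e.\ $|a_n|\le C e^{h|n|^{1/\alpha}}$ for every $h>0$ (with appropriate index), define the partial sums $f_M=\sum_{|n|\le M}a_nl_n$; for any test function $g\in\mathbf G^{\alpha}_{\alpha}(\RR^d_+)$ one has $\langle f_M,g\rangle=\sum_{|n|\le M}a_n\overline{a_n(g)}$ (since $a_n(g)=\langle g,l_n\rangle$ and $l_n$ real-valued), and by parts i)--ii) the sequence $\{a_n(g)\}$ decays exponentially, so these partial sums converge absolutely; thus $f_M$ is Cauchy in the weak-$*$ topology, and by the Banach--Steinhaus theorem (valid since $\mathbf G^{\alpha}_{\alpha}(\RR^d_+)$ is a $(DFN)$-space, being the inductive limit corresponding to the nuclear scale as in Proposition \ref{lm:nizovi-potapanje}) the limit $f$ lies in $(\mathbf G^{\alpha}_{\alpha})'(\RR^d_+)$. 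Testing against $l_m$ (which is a test function) gives $a_m(f):={}_{(\mathbf G^{\alpha}_{\alpha})'}(f,l_m)_{\mathbf G^{\alpha}_{\alpha}}=a_m$. For iv), given $f\in(\mathbf G^{\alpha}_{\alpha})'(\RR^d_+)$, boundedness of $f$ on one of the Banach building blocks $\mathbf G^{\alpha,h}_{\alpha,h}(\RR^d_+)$ means $|{}_{}(f,g)_{}|\le C\eta_h^{\alpha}(g)$ for all $g$; applying this to $g=l_n$ and using $\eta_h^{\alpha}(l_n)=\sup_N(|n|/h)^N/N!^{\alpha}\le C_2 e^{B(|n|/h)^{1/\alpha}}$ from \eqref{GS2} shows $|a_n(f)|\le C' e^{B'|n|^{1/\alpha}}$; since this holds with $h$ replaced by any larger value (as $f$ is bounded on some fixed step, but one may also argue per step in the inductive limit), one obtains the growth bound defining $\ell'_{\alpha/2}(\NN_0^d)$. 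Convergence of $\sum a_n(f)l_n$ to $f$ in $(\mathbf G^{\alpha}_{\alpha})'(\RR^d_+)$ then follows by pairing with test functions and using absolute convergence as in iii).

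\textbf{Main obstacle.} The only real subtlety is the index bookkeeping: keeping straight that the Laguerre operator $E$ has eigenvalue $|n|$ (not $2|n|+d$ as the Hermite operator), that the natural weight coming from \eqref{eta} via \eqref{GS2} is $e^{c|n|^{1/\alpha}}$, and that this matches $\vartheta_{c',\alpha/2}(n)=e^{c'|n|^{1/(2\cdot(\alpha/2))}}=e^{c'|n|^{1/\alpha}}$ — so the parameter that appears is $\alpha/2$, consistent with Theorem \ref{D Th3.6}. Once this identification is made, parts i)--ii) are Parseval plus \eqref{GS2}, and iii)--iv) are routine duality arguments exploiting nuclearity (so that weak and strong convergence of the Laguerre series coincide and Banach--Steinhaus applies).
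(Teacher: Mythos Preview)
Your proposal is correct and follows essentially the same approach as the paper: Parseval's identity combined with the eigenvalue relation \eqref{Eigen} reduces i) and ii) to converting between $\sup_N |n|^N/(h^N N!^{\alpha})$ and $e^{c|n|^{1/\alpha}}$ via \eqref{GS2}, and iii)--iv) are handled by duality (the paper simply writes ``follow by the usual duality arguments''). The only cosmetic differences are that the paper does i) before ii) and, in i), passes through the weighted $\ell^2$-norm via Remark~\ref{rem:norme} rather than splitting the exponential as you do; your index bookkeeping $\vartheta_{h,\alpha/2}(n)=e^{h|n|^{1/\alpha}}$ is exactly the identification the paper relies on.
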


\begin{proof} We  prove  i) and ii). Parts iii) and iv) then follow by the usual duality arguments.

i) Let $f\in \mathcal S(\mathbb R_+^d)$ and $h>0$ be such that $\{a_n(f)\}_{n\in\NN^d_0}\in \ell^{\infty}_{[\vartheta_{h,\alpha/2}]}$. By the norm equivalence (cf. Remark \ref{rem:norme})
there exists $h'>0$ such that $\|\{a_n\}_{n\in\NN^d_0}\|_{\ell^{2}_{[\vartheta_{h',\alpha/2}]}} <\infty$.

We need to prove that
$$\eta_{h_1}^\alpha(f)=\sup_{N\in\NN_0}
\frac{\Vert E^N f\Vert_{L^2(\RR^d_+)}}{h_1^N N!^\alpha}<\infty, $$
for some  $h_1>0$.
Choose $h_1>0$ such that $\ds h'=B /h_1^{1/\alpha}$, where $B$  is the constant  in (\ref{GS2}).
Then, by employing the identity \eqref{Eigen} we get the estimate


 \begin{equation}\label{eq:inequalities01}
\frac{\Vert E^N f\Vert_{L^2(\RR^d_+)}}{h_1^N N!^\alpha}
=
\frac{\Big(\sum_{n\in\NN^d_0}|a_n|^2\cdot|n|^{2N}\Big)^{1/2}}{h_1^N N!^\alpha}
= \Big( \sum_{n\in\NN^d_0}\frac{|a_n|^2\cdot|n|^{2N}}{h_1^{2N} N!^{2\alpha}}\Big)^{1/2}\leq C_2 \|\{a_n\}_{n\in\NN^d_0}\|_{\ell^{2}_{[\vartheta_{h',\alpha/2}]}},
 \end{equation}
for any $N\in\mathbb N_0$. This shows that
%
$$\eta_{h_1}^\alpha(f)=\sup_{N\in\NN}\frac{\Big(\sum_{n\in\NN^d_0}|a_n|^2|n|^{2N}\Big)^{1/2}}{h_1^N N!^\alpha} \leq C_2 \|\{a_n\}_{n\in\NN^d_0}\|_{\ell^{2}_{[\vartheta_{h', \alpha/2}]}}  < \infty,
$$
i.e. $f\in \mathbf{G}^{\alpha}_{\alpha}(\RR^d_+)$.

\par

ii)
 Now suppose that $f\in \mathbf{G}^{\alpha}_{\alpha}(\RR^d_+)$, i.e.
 $\eta_h(f)<\infty$  for some $h>0$.
We need to show that there exists $h_1>0$ such that $\{a_n(f)\}_{n\in\NN^d_0}\in  \ell^{\infty}_{[\vartheta_{h_1, \alpha/2}]}$.
An application of \eqref{GS2} leads to
$$\ds e^{h_2|n|^{1/\alpha}}=e^{A  \cdot \big((\frac{h_2}{A})^{\alpha}|n|\big)^{1/\alpha}}
\leq
\frac{1}{C_1} \sup_{N\in \mathbb N} \frac{ |n|^N  }{h^N N!^{\alpha}}, $$
where $h_2 : = A/ h^{1/\alpha}$. Next we estimate

$$
\sup_{n\in\NN^d_0}|a_n| e^{h_2|n|^{1/\alpha}}
 \leq
\frac{1}{C_1}  \sup_{n\in\NN^d_0} \Big( |a_n|\cdot
\big(\sup_{N\in\NN}\frac{ |n|^N}{h^N N!^\alpha}\big)\Big)
$$
$$
 \leq  \frac{1}{C_1}
\sup_{N\in\NN}\frac{\Big(\sum_{n\in\NN^d_0}|a_n|^2|n|^{2N}\Big)^{\frac{1}{2}}}{N!^{\alpha}
h^{N}} \leq C \frac{1}{C_1}  \eta_h^\alpha(f),
$$
and conclude
$$ \Big(\sum_{n\in\NN^d_0} |a_n|^2 e^{2 h_1 n^{1/\alpha}}\Big)^{1/2}\leq
\Big(\sup_{n\in\NN^d_0} |a_n| e^{3h_1 n^{1/\alpha}}\Big)\cdot \Big(\sum_{n\in\NN^d_0} e^{-h_1 n^{1/2}}\Big)^{1/2}\leq  C\eta_h^\alpha(f) < \infty.$$
Hence, $\displaystyle \{a_n(f)\}_{n\in\NN^d_0}\in  \ell^{\infty}_{[\vartheta_{h_1, \alpha/2}]}$, and the proof is completed.

\end{proof}

\begin{rem}
We note that by Theorem \ref{glavna} it follows that for any sequence $\{a_n \}_{n\in\NN^d_{0}}
\in \ell^{\infty}_{[\vartheta_{h,\alpha/2}]} $
there exist $h_1>0$ and $C_1>0$ such that
$$
\|\{a_n \}_{n\in\NN^d_{0}} \|_{\ell^{\infty}_{[\vartheta_{h,\alpha/2}]}}
\leq C_1\cdot \eta_{h_1}^\alpha\big(  T( \{a_n \}_{n\in\NN^d_{0}} )\big),
$$
and conversely, for every $f\in \mathbf G^{\alpha,h}_{\alpha,h}(\mathbb R^d)$ there exist $h_1>0$ and $C_1 >0$ such that
$\{a_n \}_{n\in\NN^d_{0}} = T^{-1}(f)\in \ell^{\infty}_{[\vartheta_{h_1,\alpha/2}]}(\NN_0^d) $,
and
$$
\eta_{h}^\alpha\big( f)\leq C_1\cdot \|\{a_n \}_{n\in\NN^d_{0}} \|_{      \ell^{\infty}_{[\vartheta_{h_1,\alpha/2}]}  }.
$$
Therefore,   the mapping $T:{\ell_{\alpha/2}(\NN_0^d)}\rightarrow \mathbf G^{\alpha}_{\alpha}(\RR^d_{+})$ is a well defined topological isomorphism.

Thus, Theorem \ref{glavna} implies that
 $\mathbf{G}^{\alpha}_{\alpha}(\RR^d_+)=\mathcal{G}_{\alpha}(\RR^d_+)$,
 $\alpha > 0 $.
\end{rem}

When considering the Beurling space $ \mathbf{g}^{\alpha}_{\alpha}(\RR^d_+)$, $ \alpha >0$,
we have the following analogue of Theorem \ref{glavna}. The proof is  similar to the proof of Theorem \ref{glavna},
and is left to the reader.

\begin{thm}\label{glavnabeurling} Let $\alpha > 0$.
\begin{itemize}
\item[i)] If $f\in \mathcal S(\mathbb R_+^d)$ and $\{a_n(f)\}_{n\in\NN^d_0}\in \ell_{0,\alpha/2}(\NN^d_0)$, then
$ f \in \mathbf{g}^{\alpha}_{\alpha}(\RR^d_+).$
\item[ii)] If  $ f \in \mathbf{g}^{\alpha}_{\alpha}(\RR^d_+)$, then
$a_n(f)\in  \ell_{0,\alpha/2}(\NN^d_0) $.
\item[iii)]  If $a_n\in \ell_{0,\alpha/2}'(\NN^d _0)$ then the series
$$\sum_{n\in\NN^d_{0}}a_n l_n$$ converges in the topology of dual space $(\mathbf{g}^{\alpha}_{\alpha})'(\RR^d_+)$,
thus defining an element $ f \in (\mathbf{g}^{\alpha}_{\alpha})'(\RR^d_+).$
Moreover, $ a_n = a_n (f)$, $ n \in \NN^d_{0}$.
\item[iv)]  Let $ f \in (\mathbf{g}^{\alpha}_{\alpha})'(\RR^d_+).$ Then for the sequence
of its Fourier-Laguerre coefficients we have $a_n(f)
\in \ell'_{0,\alpha/2}(\NN^d_{0}).$
\end{itemize}
\end{thm}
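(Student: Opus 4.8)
The plan is to mirror the proof of Theorem \ref{glavna} almost verbatim, replacing the inductive-limit (Roumieu) sequence space $\ell_{\alpha/2}(\NN_0^d)=\bigcup_{h>0}\ell^\infty_{[\vartheta_{h,\alpha/2}]}$ by the projective-limit (Beurling) space $\ell_{0,\alpha/2}(\NN_0^d)=\bigcap_{h>0}\ell^\infty_{[\vartheta_{h,\alpha/2}]}$, and accordingly quantifying over \emph{all} $h>0$ rather than \emph{some} $h>0$. As with Theorem \ref{glavna}, I would only prove i) and ii) in detail, since iii) and iv) follow by the standard duality argument (using that $T$ and $T^{-1}$ are continuous between the test spaces, hence their transposes are continuous between the duals, together with the identification $\ell'_{0,\alpha/2}=\bigcup_{h>0}\ell^\infty_{[1/\vartheta_{h,\alpha/2}]}$ and the $(DFN)/(FN)$ structure established after Proposition \ref{lm:nizovi-potapanje}).

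For part i): assume $f\in\mathcal S(\RR^d_+)$ with $\{a_n(f)\}\in\ell_{0,\alpha/2}(\NN_0^d)$, i.e.\ $\|\{a_n\}\|_{\ell^\infty_{[\vartheta_{h',\alpha/2}]}}<\infty$ for every $h'>0$; by Remark \ref{rem:norme} the same holds with the $\ell^2$-norm (for a comparable exponent). To show $f\in\mathbf g^\alpha_\alpha(\RR^d_+)$ I must verify $\eta^\alpha_{h_1}(f)<\infty$ for \emph{every} $h_1>0$. Given $h_1>0$, set $h':=B/h_1^{1/\alpha}$ with $B$ the constant in \eqref{GS2}; then the chain of inequalities \eqref{eq:inequalities01}, namely
\[
\frac{\Vert E^N f\Vert_{L^2(\RR^d_+)}}{h_1^N N!^\alpha}=\Big(\sum_{n\in\NN^d_0}\frac{|a_n|^2|n|^{2N}}{h_1^{2N}N!^{2\alpha}}\Big)^{1/2}\le C_2\,\|\{a_n\}\|_{\ell^2_{[\vartheta_{h',\alpha/2}]}},
\]
holds uniformly in $N$ using \eqref{Eigen} and \eqref{GS2}; since $h_1>0$ was arbitrary and each right-hand side is finite, $\eta^\alpha_{h_1}(f)<\infty$ for all $h_1$, so $f\in\bigcap_{h_1>0}\mathbf G^{\alpha,h_1}_{\alpha,h_1}(\RR^d_+)=\mathbf g^\alpha_\alpha(\RR^d_+)$.

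For part ii): let $f\in\mathbf g^\alpha_\alpha(\RR^d_+)$, so $\eta^\alpha_h(f)<\infty$ for \emph{every} $h>0$. Fix $h_1>0$; I want $\{a_n(f)\}\in\ell^\infty_{[\vartheta_{h_1,\alpha/2}]}$. Choose $h>0$ with $h_2:=A/h^{1/\alpha}$ slightly larger than the exponent actually needed (so that a convergent geometric-type sum can absorb the gap, exactly as in the last displayed estimate in the proof of Theorem \ref{glavna}); then \eqref{GS2} gives $e^{h_2|n|^{1/\alpha}}\le C_1^{-1}\sup_N |n|^N/(h^N N!^\alpha)$, and the same two-line estimate as in Theorem \ref{glavna} ii) yields $\sup_n |a_n|e^{h_2|n|^{1/\alpha}}\le C\,C_1^{-1}\eta^\alpha_h(f)<\infty$, from which $\{a_n(f)\}\in\ell^\infty_{[\vartheta_{h_1,\alpha/2}]}$. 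As $h_1>0$ was arbitrary, $\{a_n(f)\}\in\bigcap_{h_1>0}\ell^\infty_{[\vartheta_{h_1,\alpha/2}]}=\ell_{0,\alpha/2}(\NN_0^d)$.

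The main (and essentially only) subtlety compared with the Roumieu case is the bookkeeping of quantifiers: one must make sure that for \emph{each} target parameter $h_1$ the source parameter $h$ (or $h'$) can be chosen, and that the choice is legitimate because the hypothesis supplies \emph{all} parameters; no uniformity in $h_1$ is needed since inductive/projective limits only require membership in each Banach step. There is no genuinely new analytic obstacle. For iii) and iv) the only point to check is that $T:\ell_{0,\alpha/2}(\NN_0^d)\to\mathbf g^\alpha_\alpha(\RR^d_+)$ is a topological isomorphism (which the above two parts, together with the open mapping / closed graph theorem for $(FN)$-spaces, give), whence transposition and the density of finite Laguerre sums deliver the distributional statements, with the series $\sum_n a_n l_n$ converging in $(\mathbf g^\alpha_\alpha)'(\RR^d_+)$ because $\{a_n\}\in\ell'_{0,\alpha/2}(\NN_0^d)$ pairs continuously against $\ell_{0,\alpha/2}(\NN_0^d)$.
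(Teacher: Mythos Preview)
Your proposal is correct and follows exactly the approach the paper intends: the paper itself gives no separate proof of Theorem~\ref{glavnabeurling}, stating only that it is ``similar to the proof of Theorem~\ref{glavna}, and is left to the reader.'' Your write-up simply carries out this quantifier swap (for every $h$ in place of for some $h$) in both directions, and your handling of the duality for iii) and iv) matches the paper's ``usual duality arguments'' remark.
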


Thus,  $\mathbf{g}^{\alpha}_{\alpha}(\RR^d_+)=\mathcal{G}_{0,\alpha}(\RR^d_+)$, and the mapping
 $T:{\ell_{0, \alpha/2}(\NN_0^d)}\rightarrow \mathbf g^{\alpha}_{\alpha}(\RR^d_{+})$ is a well defined topological isomorphism. 
 
Moreover, if we combine Theorem \ref{glavna} and Theorem \ref{D Th3.6}, we obtain
\begin{equation}
\mathbf G^\alpha _\alpha(\RR^d_+) = G_\alpha^\alpha(\RR^d_+),
\quad
\alpha\geq 1, \quad  \text{and} \quad
\mathbf g ^\alpha _\alpha (\RR^d_+)= g_\alpha^\alpha(\RR^d_+), \quad
 \alpha>1.
    \end{equation}

Moreover,
\begin{equation}
\mathbf{G}^{\alpha}_{\alpha}(\RR^d_+)\neq G^{\alpha}_{\alpha}(\RR^d_+),  \quad 0<\alpha<1, \quad  \text{and} \quad  \mathbf{g}^{\alpha}_{\alpha}(\RR^d_+)\neq g^{\alpha}_{\alpha}(\RR^d_+),
 \quad 0<\alpha\leq 1.
    \end{equation}

By Theorem  \ref{glavna} and Theorem \ref{glavnabeurling} it immediately follows that   $(\mathbf{G}^{\alpha}_{\alpha})'(\RR^d_+)$ and $(\mathbf{g}^{\alpha}_{\alpha})'(\RR^d_+)$, are topologically isomorphic to sequence spaces  $\ell_{\alpha/2}'(\NN^d_0)$ and $\ell_{0,\alpha/2}'(\NN^d_0)$, $\alpha>0$, respectively.

\begin{rem}
Let now  $\alpha=0$, and $h>0$. We define $ \mathbf G^{0,h}_{0,h}(\RR^d_+)$, as
the set of all smooth functions $ f $ such that
$\| E^N f\|_{L^2(\RR^d_+) }\leq C h^{|N|}$, for some $C>0$.
Then
$$\ds \mathbf{G}^{0,h}_{0,h}(\RR^d_+)
= \left \{\sum_{n\in \NN^d_0} a_nl_n \quad | \quad  a_n=0 \;\;  \mbox{if}\;\;\ |n | >h \right \}, $$
and it follows that
$
\mathbf{G}^{0}_{0}(\RR^d_+) = \bigcup_{h>0} \mathbf{G}^{0,h}_{0,h}(\RR^d_+)
$
is the set of all (finite) linear combinations of Laguerre functions.
Therefore $\mathbf{G}^{0}_{0}(\RR^d_+)=\mathcal{G}_{0}(\RR^d_+)$,
see subsection \ref{subsec:formal}.  In addition,
$
\mathbf{g}^{0} _{0}(\RR^d_+)=  \bigcap_{h>0} \mathbf{G}^{0,h}_{0,h}(\RR^d_+) = \{0\}.
$
\end{rem}

We end this section with the kernel theorem for  $P-$spaces on positive orthants
which extends the corresponding result for $G$-type spaces.

\begin{thm}
    Let $\alpha>0$. We have the following:
$$\mathbf{G}_{\alpha}^{\alpha}(\RR^{d_1}_{+})\hat{\otimes}\mathbf{G}_{\alpha}^{\alpha}(\RR^{d_2}_{+})\cong \mathbf{G}_{\alpha}^{\alpha}(\RR^{d_1+d_2}_{+})\quad \mbox{and}\quad (\mathbf{G}_{\alpha}^{\alpha})'(\RR^{d_1}_{+})\hat{\otimes}(\mathbf{G}_{\alpha}^{\alpha})'(\RR^{d_2}_{+})\cong (\mathbf{G}_{\alpha}^{\alpha})'(\RR^{d_1+d_2}_{+}),$$

$$\mathbf{g}_{\alpha}^{\alpha}(\RR^{d_1}_{+})\hat{\otimes}\mathbf{g}_{\alpha}^{\alpha}(\RR^{d_2}_{+})\cong \mathbf{g}_{\alpha}^{\alpha}(\RR^{d_1+d_2}_{+})\quad \mbox{and}\quad (\mathbf{g}_{\alpha}^{\alpha})'(\RR^{d_1}_{+})\hat{\otimes}(\mathbf{g}_{\alpha}^{\alpha})'(\RR^{d_2}_{+})\cong (\mathbf{g}_{\alpha}^{\alpha})'(\RR^{d_1+d_2}_{+}).$$
\end{thm}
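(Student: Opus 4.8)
\emph{Plan of the proof.} The strategy is to pull the kernel theorem back to the sequence spaces representing the $P$-spaces and their duals, where it becomes the kernel theorem for nuclear sequence spaces of K\"othe type. Recall from Theorem \ref{glavna} and the remark after it that $T\colon\ell_{\alpha/2}(\NN_0^{d})\to\mathbf G^{\alpha}_{\alpha}(\RR^{d}_+)$ is a topological isomorphism; Theorem \ref{glavnabeurling} gives the same for $T\colon\ell_{0,\alpha/2}(\NN_0^{d})\to\mathbf g^{\alpha}_{\alpha}(\RR^{d}_+)$, and the transposed maps give $(\mathbf G^{\alpha}_{\alpha})'(\RR^{d}_+)\cong\ell'_{\alpha/2}(\NN_0^{d})$ and $(\mathbf g^{\alpha}_{\alpha})'(\RR^{d}_+)\cong\ell'_{0,\alpha/2}(\NN_0^{d})$. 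These four sequence spaces are nuclear by Proposition \ref{lm:nizovi-potapanje} and the discussion following it, so the completed $\pi$- and $\varepsilon$-tensor products coincide, $\hat{\otimes}$ is unambiguous, and $\hat{\otimes}$ is functorial with respect to topological isomorphisms. Hence it suffices to prove the four identities
$$
\ell_{\alpha/2}(\NN_0^{d_1})\,\hat{\otimes}\,\ell_{\alpha/2}(\NN_0^{d_2})\cong\ell_{\alpha/2}(\NN_0^{d_1+d_2}),
$$
together with the analogues in which $\ell_{\alpha/2}$ is replaced by $\ell_{0,\alpha/2}$, $\ell'_{\alpha/2}$ and $\ell'_{0,\alpha/2}$.

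The decisive step is a comparison of weights. Identifying $\NN_0^{d_1+d_2}=\NN_0^{d_1}\times\NN_0^{d_2}$ and writing $n=(n',n'')$, we have $|n|=|n'|+|n''|$, so $\vartheta_{h,\alpha/2}(n)=e^{h(|n'|+|n''|)^{1/\alpha}}$. From the elementary inequalities
$$
\tfrac12\bigl(a^{\beta}+b^{\beta}\bigr)\ \le\ (a+b)^{\beta}\ \le\ 2^{\max(\beta-1,\,0)}\bigl(a^{\beta}+b^{\beta}\bigr),\qquad a,b\ge 0,\ \ \beta=1/\alpha>0,
$$
one obtains, for each $h>0$, constants $h_1,h_2>0$ such that
$$
\vartheta_{h_1,\alpha/2}(n')\,\vartheta_{h_1,\alpha/2}(n'')\ \le\ \vartheta_{h,\alpha/2}(n)\ \le\ \vartheta_{h_2,\alpha/2}(n')\,\vartheta_{h_2,\alpha/2}(n''),\qquad n=(n',n'')\in\NN_0^{d_1}\times\NN_0^{d_2},
$$
and the same with each $\vartheta$ replaced by $1/\vartheta$ (with $h_1$ and $h_2$ interchanged). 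In other words, the K\"othe matrix $(\vartheta_{h,\alpha/2})_{h>0}$ on $\NN_0^{d_1+d_2}$ is equivalent to the product matrix $(\vartheta_{h,\alpha/2}\otimes\vartheta_{h,\alpha/2})_{h>0}$ on $\NN_0^{d_1}\times\NN_0^{d_2}$, and likewise for $(1/\vartheta_{h,\alpha/2})_{h>0}$.

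To finish, note that each of the four spaces is a nuclear sequence space of K\"othe type: $\ell_{\alpha/2}(\NN_0^{d})=\bigcup_{h>0}\ell^{2}_{[\vartheta_{h,\alpha/2}]}(\NN_0^{d})$ and $\ell'_{0,\alpha/2}(\NN_0^{d})=\bigcup_{h>0}\ell^{2}_{[1/\vartheta_{h,\alpha/2}]}(\NN_0^{d})$ are of $(DFN)$-type, while $\ell_{0,\alpha/2}(\NN_0^{d})=\bigcap_{h>0}\ell^{2}_{[\vartheta_{h,\alpha/2}]}(\NN_0^{d})$ and $\ell'_{\alpha/2}(\NN_0^{d})=\bigcap_{h>0}\ell^{2}_{[1/\vartheta_{h,\alpha/2}]}(\NN_0^{d})$ are Fr\'echet nuclear (here we pass, by Remark \ref{rem:norme}, to the equivalent $\ell^{2}$-steps, for which the kernel theorem at the Banach level is the trivial $\ell^{2}_{[w_1]}\hat{\otimes}\ell^{2}_{[w_2]}=\ell^{2}_{[w_1\otimes w_2]}$). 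For nuclear K\"othe sequence spaces the kernel theorem holds, cf. \cite{Pietsch}; equivalently, in the Fr\'echet case it is the commutation of $\hat{\otimes}$ with reduced countable projective limits of Fr\'echet nuclear spaces, cf. \cite{Tr}, and in the $(DFN)$ case one reduces to that situation by duality. Combining this with the weight comparison of the previous paragraph identifies the tensor product with $\ell_{\alpha/2}(\NN_0^{d_1+d_2})$, respectively with $\ell_{0,\alpha/2}$, $\ell'_{\alpha/2}$, $\ell'_{0,\alpha/2}$ on $\NN_0^{d_1+d_2}$. Transporting back through $T$ and its transpose yields the four isomorphisms in the statement.

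The step I expect to be the main obstacle is the interchange of $\hat{\otimes}$ with the inductive limit in the Roumieu case $\ell_{\alpha/2}$, and with the inductive limit defining $\ell'_{0,\alpha/2}$: the projective-limit version is classical, whereas the $(DFN)$ case has to be obtained either by transposing to the Fr\'echet nuclear situation — using reflexivity and the identity $(E\,\hat{\otimes}\,F)'_{b}\cong E'_{b}\,\hat{\otimes}\,F'_{b}$ for nuclear Fr\'echet and $(DF)$ spaces — or from the bounded retractivity of these inductive limits, whose linking maps are nuclear (hence compact) by Proposition \ref{lm:nizovi-potapanje}. A minor bookkeeping point is to follow the rescalings $h\mapsto h_1,h_2$ in the weight comparison through the two regimes $1/\alpha\le1$ and $1/\alpha>1$, but that is routine. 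Finally, for $\alpha\ge 1$ (Roumieu) and $\alpha>1$ (Beurling) the assertion reduces, via the identifications at the end of Section \ref{sec3}, to the kernel theorem for $G$-type spaces, \cite[Theorem 4.4]{SSBb}, so the argument above runs in parallel with, and extends, that one.
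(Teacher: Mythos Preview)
Your proposal is correct and follows essentially the same approach as the paper: reduce to the nuclear sequence spaces via the isomorphisms of Theorems \ref{glavna} and \ref{glavnabeurling}, and then establish the tensor product identity at the level of the K\"othe spaces using the weight comparison for $\vartheta_{h,\alpha/2}$. The paper's own proof is deliberately terse, simply invoking nuclearity and pointing to the calculations in \cite[Theorem 6.4]{SSB} and \cite[Theorem 4.4]{SSBb}; your write-up is in effect an expansion of precisely those details, including the handling of the $(DFN)$ inductive-limit case via duality, which is also how the cited references proceed.
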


\begin{proof}
   Recall, the sequence spaces $\ell_{\alpha/2}(\NN_0^d)$ and $\ell_{0,\alpha/2}(\NN^d_0)$ are nuclear. The claim follows by calculations similar to the ones given in the proof of 
   \cite[Theorem 6.4]{SSB}, and  \cite[Theorem 4.4]{SSBb}. We leave details to the reader.
\end{proof}

\section{Alternative norms for defining $P-$spaces} \label{sec4}

In Definition \ref{def:PilipovicOrthant} we introduced $P-$spaces by considering
$L^2-$ norms of the iterates of the Laguerre operator, and in Theorem \ref{glavna} we relate them with spaces of sequences of the corresponding Laguerre coefficients.
For such spaces of sequences we noted in  Remark \ref{rem:norme}
that the weighted $\ell^2-$norm,  $\|\cdot\|_{\ell^{2}_{[\vartheta_{h,\alpha}]}}$, which gives rise to $\ell_{\alpha}(\NN^d_0)$ and $\ell_{0,\alpha}(\NN^d_0)$,
can be replaced by a weighted $\ell_p$ norm, $p\in [1,\infty]$,
$\|\cdot\|_{\ell^{p}_{[\vartheta_{h,\alpha}]}}$,
without changing the spaces $\ell_{\alpha}(\NN^d_0)$ and $\ell_{0,\alpha}(\NN^d_0)$, and their topological structure.

In this subsection we demonstrate that the same is true for $P-$spaces. More precisely, the $L^2-$ norm, $\|\cdot\|_{L^2(\RR^d_{+})}$, in (\ref{eta})
can be replaced by $\|\cdot\|_{L^p(\RR^d_{+})}$,  $p\in [1,\infty]$, yielding
the same $P-$space, and preserving its topological structure.

\par

As a first step, we estimate $L^p-$norms  of the  Laguerre functions $l_n$, $ n \in \NN_0$.
Although this result is probably known, we give its proof for the sake of completeness.

\begin{lem} \label{lem:duran} Let  $p \in [1,\infty]$.
For any Laguerre function  $l_n$, $n\in \NN_0^d$, the following  estimate  holds:
\begin{equation}\label{lager}
\ds \|l_n\|_{L^p(\RR^d_{+})}\leq  C_1 \prod_{j=1}^d n_j^2\leq C_1 |n|^{2d},
\end{equation}
where the constant $C_1 >0$ is independent  on $n\in \NN_0^d$ and $p \in [1,\infty]$.
\end{lem}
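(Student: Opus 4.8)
The plan is to reduce the $d$-dimensional estimate to the one-dimensional case via the tensor-product structure $l_n = \prod_{j=1}^d l_{n_j}$ and the fact that $\|l_n\|_{L^p(\RR^d_+)} = \prod_{j=1}^d \|l_{n_j}\|_{L^p(\RR_+)}$, so it suffices to prove that $\|l_m\|_{L^p(\RR_+)} \leq C m^2$ for all $m \in \NN_0$ and all $p \in [1,\infty]$, with $C$ independent of $m$ and $p$. Since $\RR_+$ has infinite measure, interpolation between $p=1$ and $p=\infty$ is not immediately available, so I would instead bound the $L^1$ and $L^\infty$ norms separately and then handle intermediate $p$ by an elementary argument.

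First I would recall the classical pointwise bounds for Laguerre functions of order $0$: on the bulk region one has $|l_m(x)| \leq C$ uniformly (indeed $\|l_m\|_{L^\infty(\RR_+)}$ is bounded, but a crude polynomial bound in $m$ suffices here), and $l_m(x)$ decays like $e^{-x/2}$ times a polynomial for large $x$, which gives integrability. Concretely, using $l_m(x) = L_m(x)e^{-x/2}$ together with the standard estimate $|L_m(x)e^{-x/2}| \leq C$ for $x$ in compact sets and the Laplace–Erdélyi type asymptotics (or simply the bound $\|l_m\|_\infty \le 1$ in the $L^2$-normalized convention), one gets $\|l_m\|_{L^\infty(\RR_+)} \leq C$. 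For the $L^1$ norm, I would use that $\int_{\RR_+} |L_m(x)| e^{-x/2}\,dx$ can be estimated using $\int_{\RR_+} |L_m(x)|^2 e^{-x}\,dx = 1$ and a Cauchy–Schwarz argument against $e^{x/2}\cdot e^{-x/2} = $ against a suitable weight; a cleaner route is the known bound $\|l_m\|_{L^1(\RR_+)} \leq C m$ or $C m^2$, which appears in Duran's work — indeed the lemma is attributed in the surrounding text to estimates "probably known," and I would cite the analogous computation in \cite{D0} or \cite{D4}.

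For general $p \in (1,\infty)$, once $\|l_m\|_{L^1(\RR_+)} \leq C m^2$ and $\|l_m\|_{L^\infty(\RR_+)} \leq C$ are in hand, I would write
$$
\|l_m\|_{L^p(\RR_+)}^p = \int_{\RR_+} |l_m(x)|^p\,dx \leq \|l_m\|_{L^\infty(\RR_+)}^{p-1} \int_{\RR_+} |l_m(x)|\,dx \leq C^{p-1} \cdot C m^2,
$$
so that $\|l_m\|_{L^p(\RR_+)} \leq C^{(p-1)/p} (Cm^2)^{1/p} \leq C' m^{2/p} \leq C' m^2$ for $m \geq 1$ (and the $m=0$ case is trivial since $l_0(x)=e^{-x/2}$ has all $L^p$ norms bounded), with $C'$ independent of $p$ because $C^{(p-1)/p} \leq \max\{1,C\}$ uniformly in $p$. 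Multiplying over the $d$ coordinates yields $\|l_n\|_{L^p(\RR^d_+)} \leq \prod_{j=1}^d C' n_j^2 \leq C_1 |n|^{2d}$, which is \eqref{lager}.

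The main obstacle is establishing the $p$-independence of the constant and the correct power of $m$ in the $L^1$ (equivalently $L^\infty$) endpoint estimate: the $L^\infty$ bound on $l_m$ is essentially a sup-norm estimate on Laguerre functions that requires either a known asymptotic result or a careful direct argument, and controlling $\|l_m\|_{L^1(\RR_+)}$ on the unbounded domain needs the exponential decay of $l_m$ to be exploited quantitatively. Once the two endpoints are secured with a polynomial-in-$m$ bound and a $p$-uniform constant, the interpolation-type inequality above is routine. I expect the cleanest writeup to simply invoke the one-dimensional $L^p$ estimates from \cite{D0, D4} and spend the bulk of the proof on the reduction to $d=1$ and the endpoint interpolation step.
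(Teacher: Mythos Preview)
Your reduction to $d=1$ via the tensor product and the endpoint-plus-interpolation scheme are sound and would yield the lemma. The paper takes a more direct route: it invokes the pointwise bound $|x^k l_m(x)|\leq 4^k (m+1)(m+2)\cdots(m+k)$ from \cite{DLP}, uses $k=0$ (giving $|l_m|\le 1$) and $k=2$ (giving $|l_m(x)|\le 16(m+1)(m+2)x^{-2}$), splits $\int_{\RR_+}=\int_0^1+\int_1^\infty$, and treats all $p\in[1,\infty)$ in one line:
\[
\int_0^\infty |l_m(x)|^p\,dx \;\le\; 1 + 16^p(m+1)^p(m+2)^p\int_1^\infty x^{-2p}\,dx \;\le\; C_1^p\, m^{2p}.
\]
This bypasses the separate $L^1$/$L^\infty$ bounds and the interpolation step entirely; your approach is a bit more modular, the paper's a bit shorter.

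The one soft spot in your plan is the $L^1$ endpoint. The Cauchy--Schwarz suggestion does not close: any splitting that places the Laguerre factor in weighted $L^2$ leaves a divergent integral on the other side, since $\RR_+$ has infinite measure and the natural companion weight is not square-integrable. So you are relying entirely on the citation to Dur\'an. That is acceptable, but note that the relevant input is precisely the $k=2$ case of the pointwise bound above from \cite{DLP}; once you have it, the integral split gives $\|l_m\|_{L^1}\le C m^2$ immediately---and in fact gives the $L^p$ bound for every $p$, making your (correct) interpolation $\|l_m\|_{L^p}^p\le \|l_m\|_\infty^{p-1}\|l_m\|_1$ unnecessary.
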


\begin{proof}
Let $p\in [1,\infty)$ (the case $p=\infty$ is obvious), and
let us consider the one dimensional case, i.e.  $n\in \NN_0$.
By \cite[Thm. 3]{DLP} it follows that
\begin{equation} \label{lm:laguerre-estimate}
    |x^k l_n(x)|\leq 4^k (n+1)(n+2)\cdots (n+k), \qquad x\in\RR_{+},
\end{equation}
wherefrom
$$
\int_0^{\infty} |l_n(x)|^p dx = \left ( \int_0^{1} +\int_1^{\infty} \right )
|l_n(x)|^p dx
\leq 1 + 16^p (n+1)^p(n+2)^p\int_{1}^{\infty}\frac{1}{x^{2p}}dx   \leq C_1 ^p n^{2p},
$$
for a suitable $C_1>0$  that does not depend on $n \in \NN_0$ and $p\in [1,\infty)$.
The same calculation holds for $n\in\NN^d_{0}$, and we get \eqref{lager}.
\end{proof}

\begin{prop} \label{equiv-norm1}
Let $p\in [1,\infty]$, $\alpha>0$, and     $f\in C^{\infty}(\RR^d_{+})$. There
exist $h>0$ and $C>0$ such that
\begin{equation}\label{pnorme}
\|E^{N} f\|_{L^p(\RR^d)} \leq   C h^N N!^{\alpha},
\end{equation}
for every $N \in \NN_{0}$,
if and only if
there exists $h_1>0$ such that
$f\in \mathbf G^{\alpha,h_1}_{\alpha,h_1}(\RR^d_+)$.
\end{prop}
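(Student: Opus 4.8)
The plan is to transfer both implications to the level of the Laguerre coefficients $a_n(f)=\langle f,l_n\rangle$, $n\in\NN_0^d$, using the characterization of $\mathbf G^\alpha_\alpha(\RR^d_+)$ from Theorem~\ref{glavna}, the bound $\|l_n\|_{L^q(\RR^d_+)}\le C_1(1+|n|)^{2d}$ with $C_1$ uniform in $q\in[1,\infty]$ (Lemma~\ref{lem:duran}), the eigenrelation \eqref{Eigen}, and the two-sided estimate \eqref{GS2} (whether one lets $N$ run over $\NN$ or $\NN_0$ is immaterial, since either condition forces $f\in\mathcal S(\RR^d_+)$, hence $f\in L^q(\RR^d_+)$ for all $q$). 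For the direction ``$f\in\mathbf G^{\alpha,h_1}_{\alpha,h_1}(\RR^d_+)$ implies \eqref{pnorme}'' I would first invoke Theorem~\ref{glavna}(ii) and Remark~\ref{rem:norme} to obtain $h'>0$ and $C>0$ with $|a_n(f)|\le C e^{-h'|n|^{1/\alpha}}$ and with the Laguerre series of $f$ converging in $\mathcal S(\RR^d_+)$, so that $E^Nf=\sum_n|n|^Na_n(f)l_n$ converges in $\mathcal S(\RR^d_+)$, hence in $L^p(\RR^d_+)$. Then, by the triangle inequality in $L^p$ and Lemma~\ref{lem:duran},
\[
\|E^Nf\|_{L^p(\RR^d_+)}\le CC_1\,\Big(\sup_{s\ge0}s^{N+2d}e^{-\frac{h'}{2}s^{1/\alpha}}\Big)\sum_{n\in\NN_0^d}e^{-\frac{h'}{2}|n|^{1/\alpha}},
\]
where the series is a finite constant; estimating the supremum via the substitution $t=s^{1/\alpha}$, $\sup_{t\ge0}t^me^{-ct}=(m/ce)^m$, Stirling, and $(N+2d)!\le C_d\,2^N\,N!$ bounds the right-hand side by $C'h^NN!^\alpha$ for a suitable $h=h(\alpha,h')$, which is \eqref{pnorme}.

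For the converse, assume \eqref{pnorme}. Then $f\in L^p(\RR^d_+)$ and, since $l_n\in\mathcal S(\RR^d_+)\subset L^{p'}(\RR^d_+)$, the coefficients $a_n(f)=\int_{\RR^d_+}fl_n$ are well defined. Granting the identity $|n|^Na_n(f)=\langle E^Nf,l_n\rangle$ (from the formal self-adjointness of $E$ together with the boundary regularity and rapid decay of $l_n$), Hölder's inequality and Lemma~\ref{lem:duran} give $|n|^N|a_n(f)|\le CC_1(1+|n|)^{2d}h^NN!^\alpha$ for every $N$; taking the infimum over $N$ and using the left inequality in \eqref{GS2} yields
\[
|a_n(f)|\le C''(1+|n|)^{2d}e^{-Ah^{-1/\alpha}|n|^{1/\alpha}}\le C'''e^{-h'|n|^{1/\alpha}}
\]
for any $h'<Ah^{-1/\alpha}$, so $\{a_n(f)\}_n\in\ell^{\infty}_{[\vartheta_{h',\alpha/2}]}(\NN_0^d)\subset\ell_{\alpha/2}(\NN_0^d)$. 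Once we also know that $f$ equals its own Laguerre series, and in particular $f\in\mathcal S(\RR^d_+)$, Theorem~\ref{glavna}(i) — or rather the explicit estimate inside its proof applied to this sequence, exactly as in the previous paragraph with $p=2$ — gives $f\in\mathbf G^{\alpha,h_1}_{\alpha,h_1}(\RR^d_+)$ for some $h_1=h_1(\alpha,h)$.

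The main obstacle is precisely the step invoked twice in the converse: deducing, from $E^Nf\in L^p(\RR^d_+)$ for all $N$, both that $f\in\mathcal S(\RR^d_+)$ (equivalently $E^Nf\in L^2(\RR^d_+)$ for all $N$, after which Lemma~\ref{lm:S+} applies) and that $|n|^Na_n(f)=\langle E^Nf,l_n\rangle$; for $p\neq2$ there is no inclusion between $L^p(\RR^d_+)$ and $L^2(\RR^d_+)$, and the Laguerre expansion is an $L^2$-object, so neither the expansion of $f$ nor the integration by parts is available for free. I would resolve this by a duality and density argument: $f\in L^p(\RR^d_+)$ defines an element of $\mathcal S'(\RR^d_+)$, because $\|\varphi\|_{L^{p'}}$ is dominated by a continuous seminorm on $\mathcal S(\RR^d_+)$; since $E$ is formally self-adjoint on $\mathcal S(\RR^d_+)$, one has $|n|^Na_n(f)=\langle f,E^Nl_n\rangle=\langle E^Nf,l_n\rangle$ in the distributional sense, and this coincides with $\int_{\RR^d_+}(E^Nf)l_n$ by a cut-off together with dominated convergence using $E^Nf\in L^p$. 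The coefficient estimate above then places $\{a_n(f)\}$ in $\ell_{\alpha/2}(\NN_0^d)$, so $g:=\sum_na_n(f)l_n\in\mathcal S(\RR^d_+)$ by Theorem~\ref{glavna}; finally $f=g$ because $f-g\in\mathcal S'(\RR^d_+)$ annihilates every $l_n$ while finite linear combinations of Laguerre functions are dense in $\mathcal S(\RR^d_+)$ (by the isomorphism $\mathcal S(\RR^d_+)\cong s$, \cite[Theorem~3.1]{Sm}), and, $g$ being smooth, $f=g\in\mathcal S(\RR^d_+)$.

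An alternative to the density step would be to prove an a priori inequality of the form $\|u\|_{L^2(\RR^d_+)}\le C(\|u\|_{L^p(\RR^d_+)}+\|Eu\|_{L^p(\RR^d_+)})$ and apply it with $u=E^Nf$, which transfers \eqref{pnorme} into the $L^2$-bounds directly (after absorbing $(N+1)!^\alpha\le\text{const}^N\,N!^\alpha$); such an estimate is plausible since the $-\sum_jx_j/4$ part of $E$ confines mass at infinity and, after the substitution $x_j\mapsto s_j^2$, $E$ is (weighted) elliptic near the boundary. A final, purely routine point is the bookkeeping of constants needed to land in a single Banach space $\mathbf G^{\alpha,h_1}_{\alpha,h_1}(\RR^d_+)$ rather than merely in the inductive limit $\mathbf G^\alpha_\alpha(\RR^d_+)$, which the explicit dependences $h=h(\alpha,h')$ and $h'<Ah^{-1/\alpha}$ above already make transparent.
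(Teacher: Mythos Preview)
Your approach is essentially the same as the paper's: in both directions you pass through the Laguerre coefficients, using H\"older together with Lemma~\ref{lem:duran} to control $|n|^N|a_n(f)|$ in the $(\Rightarrow)$ direction, and using Theorem~\ref{glavna} together with the triangle inequality in $L^p$ and Lemma~\ref{lem:duran} in the $(\Leftarrow)$ direction. The only cosmetic difference is that you estimate $\sup_{s\ge0}s^{N+2d}e^{-(h'/2)s^{1/\alpha}}$ by elementary calculus and Stirling, whereas the paper invokes the right-hand inequality of \eqref{GS2} directly; the outcomes are the same.

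Where you go beyond the paper is in the $(\Rightarrow)$ direction: the paper simply writes $|n|^N|a_n(f)|=\big|\int_{\RR^d_+}(E^Nf)\,l_n\big|$ and then appeals to Theorem~\ref{glavna}~(i), taking both the integration-by-parts identity and the membership $f\in\mathcal S(\RR^d_+)$ for granted. You correctly isolate this as the only nontrivial point when $p\neq2$, and your proposed resolution---treat $f\in L^p\subset\mathcal S'(\RR^d_+)$, use the formal self-adjointness of $E$ on $\mathcal S(\RR^d_+)$ to identify $\langle E^Nf,l_n\rangle$ with $\int(E^Nf)l_n$, deduce $\{a_n(f)\}\in\ell_{\alpha/2}(\NN_0^d)$, and then recover $f=\sum a_n(f)l_n\in\mathcal S(\RR^d_+)$ by density of the Laguerre functions---is sound and more rigorous than what the paper records. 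Your alternative route via an a~priori $L^2\!\lesssim\!L^p+L^p$ estimate is unnecessary once the density argument is in place.
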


\begin{proof}
Suppose $p\in [1,\infty)$, without loss of generality.

$(\Rightarrow)$ By the H\"{o}lder inequality, (\ref{pnorme}) and Lemma \ref{lem:duran},
for any given $n, N \in \NN_{0}$, we have
 $$|n|^N |a_n(f)|= |\int_{\RR^d_{+}} E^N(f(x)) l_n (x)dx| \leq  \|E^N f \|_{L^p(\RR^d_{+})}\cdot \|l_n\|_{L^{p'} (\RR^d_{+})} \leq   C_1 h^N N!^{\alpha} |n|^{2d},
 $$
for a suitable constant $C_1>0$, where  $\ds \frac{1}{p}+\frac{1}{p'}=1$.
Hence, there exist $C_2, C_3, h_3 >0$  such that
$$|a_n(f)|\leq \sup_{N\in\NN_0} \frac{ C_2 h^N N!^{\alpha}} {|n|^N}\cdot |n|^{2d}\leq C_3 e^{-h_3|n|^{1/\alpha}}\cdot |n|^{2d},$$
and therefore
$
\ds \|\{a_n(f) \}_{n\in\NN^d_{0}}\|_{\ell^{\infty}_{[\vartheta_{h_3/2,\alpha/2}]}}<\infty.
$
By Remark \ref{rem:norme} there exist  $h_4>0$, so that
$\ds \|\{a_n(f) \}_{n\in\NN^d_{0}} \|_{\ell^{2}_{[\vartheta_{h_4,\alpha/2}]}(\NN_0^d)}<\infty, $ and the claim follows from Theorem \ref{glavna}.

$(\Leftarrow)$
Let $f\in \mathbf G^{\alpha, h_1}_{\alpha,h_1}(\RR_{+}^d)$  for some $h_1>0$, and let
$\{a_n(f) \}_{n\in\NN^d_{0}}$ be the sequence of its Laguerre coefficients. By Theorem \ref{glavna} we have that $|a_n(f)|\leq C_2 e^{-h_2|n|^{1/\alpha}}$ for some
$h_2,C_2>0$. This and Lemma \ref{lem:duran} imply
$ \ds
\|\sum_{n\in\NN^d_{0}}a_n(f)\cdot  l_n \|_{L^p(\RR^d_{+})}
<\infty.
$
Moreover, from the right hand side inequality in  (\ref{GS2}) we deduce that
$$
|n|^N e^{-\frac{h_2}{2}|n|^{1/\alpha}} \leq  C_3 h^N N!^{\alpha},
$$
for some positive constants $ h$ and $  C_3 $, wherefrom
$$\|E^{N} f\|_{L^p(\RR^d _+)}\leq
\sum_{n\in\NN^d_{0}}|n|^N |a_n| \cdot \|l_n\|_{L^p(\RR^d_{+})} \leq
C_1 C_2 \sum_{n\in\NN^d_{0}} |n|^N e^{-\frac{h_2}{2}|n|^{1/\alpha}} |n|^{2d} e^{-\frac{h_2}{2}|n|^{1/\alpha}}    $$
$$\leq
C_1 C_2 C_3  h^N N!^{\alpha} \sum_{n\in\NN^d_{0}}  |n|^{2d} e^{-\frac{h_2}{2}|n|^{1/\alpha}} = C  h^N N!^{\alpha}, $$
with $ C = C_1 C_2 C_3$, which concludes the proof.
\end{proof}

\section{Relation between Pilipovi\'{c} space on $\RR^d$ and $P-$spaces on $\RR^d_+$}
\label{sec5}

In this section we extend some results from \cite{SSSB} where
the relation between $G-$type spaces and Gelfand-Shilov spaces is studied. 
Our goal here is to find natural global counterparts of $P-$spaces on positive orthants.

By  $\mathcal{H}_{\alpha, even}(\RR^d)$
we denote the space  of ``even'' functions in $\mathcal{H}_{\alpha}(\RR^d)$, i.e.
$$
f \in  \mathcal{H}^{\alpha}_{\alpha, even}(\RR^d) \quad \Longleftrightarrow
\quad
f(x) = f (x_1, x_2,\dots, x_d) = f (x_1,\dots, x_{j-1},-x_j,x_{j+1},\dots x_d),
$$
for every $ x = (x_1, x_2,\dots, x_d) \in\RR^d$.
In a similar fashion, one can introduce  $\mathcal S^{\alpha}_{\alpha, even}(\RR ^d)$,
$\Sigma ^{\alpha}_{\alpha, even}(\RR^d)$, and  $\mathcal H_{0,\alpha, even}(\RR ^d)$, $ \alpha > 0$. We start with an easy observation.

\begin{lem} \label{pprk1} Let $\alpha > 0$.
The space $\mathcal H_{\alpha, even}(\RR ^d)$
is a closed subspace of $\mathcal H_{\alpha}(\RR ^d)$.
It consists of $f\in \mathcal H_{\alpha}(\RR^d)$ which can be represented as $f=\sum_{n\in\NN^d_0} a_{2n} (f) h_{2n}$ where $\{a_{2n} (f) \}_{n\in\NN^d_0}\in \ell_{\alpha} (\NN^d_0)$.
\end{lem}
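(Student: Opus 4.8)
The plan is to transport the whole statement to the sequence side through the defining isomorphism $T_h:\ell_\alpha(\NN_0^d)\to\mathcal H_\alpha(\RR^d)$ of \eqref{eq:mappingTHermite}, after recording the parity of the Hermite functions. First I would use $H_n(-x)=(-1)^nH_n(x)$ together with the evenness of the Gaussian factor to obtain, for each $j\in\{1,\dots,d\}$, that $R_jh_n=(-1)^{n_j}h_n$, where $R_j$ denotes composition with the reflection $x_j\mapsto -x_j$ (a well-defined linear operator on $\mathcal H_\alpha(\RR^d)$, whose elements are smooth functions). Hence for $f=\sum_n a_n(f)h_n\in\mathcal H_\alpha(\RR^d)$ one has $R_jf=\sum_n(-1)^{n_j}a_n(f)h_n$, and by uniqueness of the Hermite coefficients $R_jf=f$ is equivalent to $a_n(f)=0$ for every $n$ with $n_j$ odd. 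Intersecting over $j=1,\dots,d$, membership in $\mathcal H_{\alpha,even}(\RR^d)$ is equivalent to $a_n(f)=0$ for all $n\notin 2\NN_0^d$, i.e. to $f=\sum_{m\in\NN_0^d}a_{2m}(f)h_{2m}$.

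Next I would identify the relevant coefficient space. Since $|2m|=2|m|$, we have $\vartheta_{h,\alpha}(2m)=\vartheta_{2^{1/(2\alpha)}h,\alpha}(m)$, so the zero-extension map $\{a_{2m}\}_{m\in\NN_0^d}\mapsto\{b_n\}_{n\in\NN_0^d}$, with $b_{2m}=a_{2m}$ and $b_n=0$ for $n\notin 2\NN_0^d$, carries $\ell^\infty_{[\vartheta_{2^{1/(2\alpha)}h,\alpha}]}(\NN_0^d)$ isometrically onto the subspace of $\ell^\infty_{[\vartheta_{h,\alpha}]}(\NN_0^d)$ consisting of sequences supported on $2\NN_0^d$; passing to the inductive limit over $h>0$ it becomes a topological isomorphism of $\ell_\alpha(\NN_0^d)$ onto the subspace of even-supported sequences in $\ell_\alpha(\NN_0^d)$. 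Combining this with the previous paragraph and with $T_h$ yields precisely the asserted description $\mathcal H_{\alpha,even}(\RR^d)=\{\sum_m a_{2m}(f)h_{2m}:\{a_{2m}(f)\}_{m\in\NN_0^d}\in\ell_\alpha(\NN_0^d)\}$.

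For closedness I would argue on $\mathcal H_\alpha(\RR^d)$ directly. Via $T_h$ the operator $R_j$ corresponds to the coordinate sign change $\{a_n\}\mapsto\{(-1)^{n_j}a_n\}$, which is an isometric automorphism of every Banach space $\ell^\infty_{[\vartheta_{h,\alpha}]}(\NN_0^d)$ (the weight depends only on $|n|$) and therefore, by the universal property of the inductive limit, a topological automorphism of $\ell_\alpha(\NN_0^d)$; hence $R_j$ is a continuous linear operator on $\mathcal H_\alpha(\RR^d)$. Consequently $\mathcal H_{\alpha,even}(\RR^d)=\bigcap_{j=1}^d\ker(R_j-\mathrm{id})$ is a finite intersection of closed subspaces, so it is closed. (Equivalently, it is the range of the continuous projection $P$ on $\mathcal H_\alpha(\RR^d)$ that deletes all coefficients $a_n$ with $n\notin 2\NN_0^d$, i.e. the kernel of $\mathrm{id}-P$.)

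I expect the only delicate point to be the topological bookkeeping on the inductive limit $\ell_\alpha(\NN_0^d)=\bigcup_{h>0}\ell^\infty_{[\vartheta_{h,\alpha}]}(\NN_0^d)$: one must make sure that an operator acting continuously (here isometrically) on each Banach step really descends to a continuous operator on the limit, and that the isomorphism $T_h$ lets one transfer closedness between $\ell_\alpha(\NN_0^d)$ and $\mathcal H_\alpha(\RR^d)$. Both are standard consequences of the way these spaces are set up in Subsection \ref{subsec:formal} (together with Proposition \ref{lm:nizovi-potapanje}), but they deserve to be spelled out; everything else reduces to the elementary parity computation for $h_n$ and the harmless rescaling $h\mapsto 2^{1/(2\alpha)}h$ of the weight parameter.
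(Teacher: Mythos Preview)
Your argument is correct and is precisely the natural route: exploit the parity $R_jh_n=(-1)^{n_j}h_n$, transport everything to $\ell_\alpha(\NN_0^d)$ via $T_h$, observe that the coordinate sign-change is an isometry of each Banach step (hence continuous on the inductive limit), and realize $\mathcal H_{\alpha,even}(\RR^d)$ as $\bigcap_j\ker(R_j-\id)$. The paper does not give an independent proof here but simply refers to \cite[Proposition~2.2]{SSSB} (proved there for $\alpha\geq 1/2$) and remarks that the same argument goes through for all $\alpha>0$; your write-up is essentially an explicit version of that argument, including the harmless rescaling $h\mapsto 2^{1/(2\alpha)}h$ that makes the ``$\{a_{2n}\}\in\ell_\alpha$'' convention of the paper's Remark following the lemma precise.
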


\begin{proof}
    The proof for $ \alpha \geq 1/2$ is given in \cite{SSSB}.
    We notice that a more general case $ \alpha >0$  follows by using the same arguments as in the proof of \cite[Proposition 2.2]{SSSB}.
\end{proof}

\begin{rem} The meaning of $\{a_{2n}\}_{n\in\NN^d_0}\in \ell_{\alpha} (\NN^d_0) $
should be understood in the following way: $\{a_{2n}\}_{n\in\NN^d_0}\in \ell_{\alpha} (\NN^d_0) $ if and only if the sequence $\{b_k\}_{k\in\NN^d_0}$ belongs to
$\ell_{\alpha} (\NN^d_0) $ where $ b_k = a_k $ for all
indices $k=2n$, $n\in\NN^d_0$,  and all other elements of $\{b_k\}_{k\in\NN^d_0}$ are equal to $0$.
In the sequel, whenever we use this notation
(also when considering sequences in $\ell_{0,\alpha} (\NN^d_0) $), it will have exactly this meaning.
\end{rem}

To establish the relationship between
$\mathcal H_{\alpha, even}(\RR ^d)$
and $\mathbf{G}^{\alpha}_{\alpha}(\RR_{+}^d)$, $ \alpha > 0$,
we first introduce some notation and recall  two important results from  \cite{SSSB}.
Let 
$$
v:\RR^d\rightarrow \overline{\RR_{+}^d}, \;\;\; v(x)=(x_1^2,\dots, x_d^2),
\quad \text{and} \quad
w:\overline{\RR_{+}^d} \rightarrow \overline{\RR_{+}^d}, \;\;\; w(x)=(\sqrt{x_1},\dots \sqrt{x_d}),$$
$\mathbf{1/2}=(1/2,\ldots, 1/2)\in\RR^d_+$, and $\mathbf{3/2}=(3/2,\ldots, 3/2)\in\RR^d_+$.

\begin{prop}  \cite[Proposition 3.1]{SSSB}
\label{Prop3.1}
If $\ds f=\sum_{n\in\NN^d_0}{a_n(f)l_n}\in G_{\alpha}^{\alpha}(\RR^d_{+})$,
$\alpha\geq 1$. Then $f\circ v\in \mathcal{S}^{\alpha/2}_{\alpha/2, even}(\RR^d)$ and we have
$ \displaystyle f\circ v=\sum_{n\in\NN^d_0} b_{2n}h_{2n},$
where $\{b_{2n}\}_{n\in\NN^d_0}\in \ell_{\alpha/2}(\NN^d_0)$, is given by
\begin{equation}\label{luh}
b_{2n}=\frac{(-1)^n  \pi^{d/4} \sqrt{(2n)!} }{2^{|n|}n!}\cdot \sum_{k\in\NN_0^d} a_{k+n}(f){k-{\bf 1/2}\choose k}, \quad n\in\NN_0^d.\end{equation}

Moreover, the mapping $f \mapsto f\circ v $ is a continuous injection from
$ G_{\alpha}^{\alpha}(\RR^d_{+})$ to $\mathcal{S}^{\alpha/2}_{\alpha/2, even}(\RR^d)$.
\end{prop}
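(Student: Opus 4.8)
The plan is to carry the statement down to the level of coefficient sequences, using the Laguerre isomorphism of Theorem~\ref{D Th3.6} for $G^\alpha_\alpha(\RR^d_+)$ and the Hermite isomorphism of Proposition~\ref{char S and S'} for $\mathcal S^{\alpha/2}_{\alpha/2}(\RR^d)$; there the statement reduces to an explicit computation of the transition coefficients between the families $\{l_m\circ v\}$ and $\{h_{2n}\}$, together with the exponential estimates available for both sequence spaces.

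First I would record the finite identity expressing each $l_m\circ v$ as a linear combination of even Hermite functions. In one variable, combining $H_{2n}(x)=(-1)^n2^{2n}n!\,L_n^{-1/2}(x^2)$ with the classical connection formula
$$L_m^{0}(y)=\sum_{n=0}^{m}\binom{m-n-1/2}{m-n}\,L_n^{-1/2}(y),\qquad y\ge 0,$$
then multiplying by $e^{-x^2/2}$ and invoking $h_{2n}(x)=(2^{2n}(2n)!\sqrt\pi)^{-1/2}e^{-x^2/2}H_{2n}(x)$, yields
$$l_m(x^2)=\sum_{n=0}^{m}\binom{m-n-1/2}{m-n}\,\frac{(-1)^{n}\pi^{1/4}\sqrt{(2n)!}}{2^{n}n!}\,h_{2n}(x);$$
taking tensor products over the $d$ coordinates gives the $d$-dimensional version, whose coefficient of $h_{2n}$, call it $c_{m,n}$ (defined for $n\le m$), carries exactly the factor $\binom{k-\mathbf{1/2}}{k}$ with $k=m-n$ and the prefactor in $n$ appearing in \eqref{luh}. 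I expect the main nuisance of the whole argument to be precisely this bookkeeping: matching the product of the two classical formulas together with the normalisation constants of the $h_n$, the multi-index factorials, and the signs, to the expression \eqref{luh}.

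Next, writing $f=\sum_m a_m(f)l_m$ and interchanging the order of summation, I would get
$$f\circ v=\sum_{m\in\NN_0^d}a_m(f)\,(l_m\circ v)=\sum_{m}a_m(f)\sum_{n\le m}c_{m,n}h_{2n}=\sum_{n\in\NN_0^d}\Big(\sum_{k\in\NN_0^d}a_{k+n}(f)\,c_{k+n,n}\Big)h_{2n},$$
and since $c_{k+n,n}$ is precisely the factor multiplying $a_{k+n}(f)$ in \eqref{luh}, the bracketed sum is $b_{2n}$. The interchange is a Fubini argument for double sums: for fixed $x$ one has $|a_m(f)|\le Ce^{-h|m|^{1/\alpha}}$ for some $h,C>0$ by Theorem~\ref{D Th3.6}, while the bounds $0\le\binom{k-\mathbf{1/2}}{k}=\prod_j\binom{2k_j}{k_j}4^{-k_j}\le1$ and $\sqrt{(2n)!}\,(2^{|n|}n!)^{-1}\le1$ control $|c_{m,n}|$, and $|h_{2n}(x)|$ grows at most polynomially in $n$; hence $\sum_m\sum_{n\le m}|a_m(f)|\,|c_{m,n}|\,|h_{2n}(x)|<\infty$. (That the left-hand side is genuinely $f\circ v$ uses that $f=\sum_m a_m(f)l_m$ converges in $\mathcal S(\RR^d_+)$, hence pointwise on $\overline{\RR^d_+}$.) The same bounds give $\{b_{2n}\}_{n\in\NN_0^d}\in\ell_{\alpha/2}(\NN_0^d)$: since $|k+n|=|k|+|n|\ge\max(|k|,|n|)$ one has $|k+n|^{1/\alpha}\ge\frac12\big(|k|^{1/\alpha}+|n|^{1/\alpha}\big)$, so $\sum_k|a_{k+n}(f)|\binom{k-\mathbf{1/2}}{k}\le C'e^{-(h/2)|n|^{1/\alpha}}$ and therefore $|b_{2n}|\le C''e^{-(h/2)|n|^{1/\alpha}}$ for suitable $C',C''>0$.

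Finally, since $\alpha/2\ge1/2$, Proposition~\ref{char S and S'} applied with exponent $\alpha/2$ gives $f\circ v=\sum_n b_{2n}h_{2n}\in\mathcal S^{\alpha/2}_{\alpha/2}(\RR^d)$ with absolutely convergent Hermite series; only even-indexed Hermite functions occur (equivalently, $f\circ v$ is invariant under each $x_j\mapsto-x_j$), so Lemma~\ref{pprk1} places it in $\mathcal S^{\alpha/2}_{\alpha/2,even}(\RR^d)$. Tracking the dependence of $C''$ on the $\ell^{\infty}_{[\vartheta_{h,\alpha/2}]}$-norm of $\{a_n(f)\}$, the estimate $|b_{2n}|\le C''e^{-(h/2)|n|^{1/\alpha}}$ shows that $\{a_n(f)\}\mapsto\{b_{2n}\}$ maps $\ell^{\infty}_{[\vartheta_{h,\alpha/2}]}(\NN_0^d)$ continuously into $\ell^{\infty}_{[\vartheta_{h',\alpha/2}]}(\NN_0^d)$ for a suitable $h'=h'(h)>0$ and every $h>0$, hence $\ell_{\alpha/2}(\NN_0^d)$ continuously into itself; composing with the isomorphisms of Theorem~\ref{D Th3.6} and Proposition~\ref{char S and S'} yields the continuity of $f\mapsto f\circ v$ from $G_\alpha^\alpha(\RR^d_+)$ into $\mathcal S^{\alpha/2}_{\alpha/2,even}(\RR^d)$. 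Injectivity is then immediate, since $v$ restricts to a bijection of $\RR^d_+$ onto itself, so $f\circ v\equiv0$ forces $f\equiv0$ on $\RR^d_+$, i.e. $f=0$ in $G_\alpha^\alpha(\RR^d_+)$.
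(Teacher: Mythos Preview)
The paper does not give its own proof of this proposition; it is quoted verbatim from \cite[Proposition 3.1]{SSSB}. From the surrounding discussion in Section~\ref{sec5} (in particular the reference to inequality (3.4) of \cite{SSSB}, which is precisely your splitting $|k+n|^{1/\alpha}\ge\tfrac12(|k|^{1/\alpha}+|n|^{1/\alpha})$) and from the analogous computations carried out in Section~\ref{sec6} for the flat case, one sees that the argument in \cite{SSSB} proceeds exactly as you do: derive \eqref{luh} from the connection formula $L_m^{0}=\sum_{n\le m}\binom{m-n-1/2}{m-n}L_n^{-1/2}$ combined with $H_{2n}=(-1)^n2^{2n}n!\,L_n^{-1/2}(\cdot^{\,2})$, then use the exponential decay of the Laguerre coefficients from Theorem~\ref{D Th3.6} together with the bounds $|\binom{k-\mathbf{1/2}}{k}|\le1$ and $\sqrt{(2n)!}\le 2^{|n|}n!$ and the splitting above to obtain $|b_{2n}|\le C''e^{-(h/2)|n|^{1/\alpha}}$, and finally invoke Proposition~\ref{char S and S'}. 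Your proof is correct and follows the same route.
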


\begin{prop}  \cite[Proposition 3.2]{SSSB}
\label{Prop3.2}
Let $\alpha\geq 1$, and $\ds f=\sum_{n\in\NN_0^d} a_{2n}(f) h_{2n}\in \mathcal{S}^{\alpha/2}_{\alpha/2, even}(\RR^d) $. Then
$\ds f|_{\RR^d_{+}} \circ w$, belongs to $ G_{\alpha}^{\alpha}(\RR^d_{+})$, and 
$\ds f|_{\RR^d_{+}} \circ w = \sum_{n\in\NN_0^d} b_n l_n$, where
$\{b_n \}_{n\in\NN_0^d}\in \ell_{\alpha/2}(\NN^d_0)$, and $b_n $ is given by
\begin{equation}\label{hul}
b_n=\frac{(-1)^n 2^{|n|}}{\pi^{d/4}}\sum_{k\in\NN_0^d}{k-{ \bf 3/2} \choose k}\frac{(-1)^{|k|} 2^{|k|}  (k+n)! a_{2(k+n)}  (f) }{\sqrt{(2(k+n))!}}, \quad n\in \NN_0^d.
\end{equation}

Moreover, the mapping $f \mapsto f\circ v $ is a continuous injection from
$\mathcal{S}^{\alpha/2}_{\alpha/2, even}(\RR^d)$ to $ G_{\alpha}^{\alpha}(\RR^d_{+})$.
\end{prop}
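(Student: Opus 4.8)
The plan is to transport the Hermite expansion of $f$ into the Laguerre expansion of $f|_{\RR^d_+}\circ w$ by two elementary steps, working in dimension $d=1$ first (the general case follows since all spaces and operators involved are tensor products). First I would rewrite the Hermite basis in terms of Laguerre functions after the substitution $x=\sqrt t$: combining $H_{2n}(x)=(-1)^n2^{2n}n!\,L_n^{-1/2}(x^2)$ with the definitions of $h_{2n}$ and of $l_n^{-1/2}(y)=L_n^{-1/2}(y)e^{-y/2}$ gives
$$h_{2n}(x)=(-1)^n\,\frac{2^{n}n!}{\sqrt{(2n)!}\,\pi^{1/4}}\;l_n^{-1/2}(x^2),\qquad x\ge 0,$$
so that, at least formally, $f|_{\RR_+}\circ w=\sum_n c_n\,l_n^{-1/2}$ with $c_n=(-1)^n\,2^{n}n!\,\bigl((2n)!\bigr)^{-1/2}\pi^{-1/4}\,a_{2n}(f)$. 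The second step converts the $l_n^{-1/2}$-series into the standard Laguerre series by means of the classical connection formula $L_n^{-1/2}=\sum_{k=0}^{n}\binom{n-k-3/2}{n-k}L_k$, which after multiplication by $e^{-x/2}$ reads $l_n^{-1/2}=\sum_{k=0}^{n}\binom{n-k-3/2}{n-k}l_k$; substituting this, reindexing, and interchanging the two summations, the coefficient of $l_m$ turns out to be exactly the right-hand side of \eqref{hul} (with $k$ the summation index there). The interchange of summations is justified once the estimates below are available, since they make the double series absolutely convergent in the sup-norm (using a crude polynomial bound for $\|l_n\|_{L^\infty}$, e.g.\ the one behind \eqref{lager}).

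The analytic content is to show $\{b_n\}\in\ell_{\alpha/2}(\NN_0^d)$ together with the appropriate continuity. Since $f\in\mathcal S^{\alpha/2}_{\alpha/2,even}(\RR^d)$, Proposition \ref{char S and S'} and Lemma \ref{pprk1} give $\{a_{2n}(f)\}\in\ell_{\alpha/2}(\NN_0^d)$, i.e.\ $|a_{2n}(f)|\le C e^{-h|n|^{1/\alpha}}$ for some $C,h>0$ (with the reindexing convention explained after Lemma \ref{pprk1}). By Stirling's formula the factor $2^{|k+n|}(k+n)!\,\bigl((2(k+n))!\bigr)^{-1/2}$ occurring in \eqref{hul} is bounded by a polynomial in $|k+n|$, while $\bigl|\binom{k-\mathbf{3/2}}{k}\bigr|$ decays like $(1+|k|)^{-3/2}$, so in fact $\sum_{k}\bigl|\binom{k-\mathbf{3/2}}{k}\bigr|<\infty$. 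Using the elementary inequality $(|k|+|n|)^{1/\alpha}\ge\tfrac{1}{2}\bigl(|k|^{1/\alpha}+|n|^{1/\alpha}\bigr)$ one splits the exponential $e^{-h|k+n|^{1/\alpha}}$, sums the resulting series over $k$, and absorbs the residual polynomial in $|n|$ into a marginally smaller exponential, obtaining $|b_n|\le C'e^{-\tfrac{h}{4}|n|^{1/\alpha}}$. Hence $\{b_n\}\in\ell^{\infty}_{[\vartheta_{h/4,\alpha/2}]}(\NN_0^d)\subset\ell_{\alpha/2}(\NN_0^d)$; moreover all the constants depend only on the norm of $\{a_{2n}(f)\}$ in the relevant Banach step, so the linear map $\{a_{2n}(f)\}\mapsto\{b_n\}$ is continuous from $\ell_{\alpha/2}(\NN_0^d)$ into itself.

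Finally I would assemble the pieces. By Theorem \ref{D Th3.6}$(i)$ the series $\sum_n b_n l_n$ converges in $G^\alpha_\alpha(\RR^d_+)$, and since the seminorms defining $G^\alpha_\alpha(\RR^d_+)$ dominate the $L^\infty$-norm, this convergence is in particular uniform on $\RR^d_+$; hence the limit coincides with the pointwise sum $\sum_n c_n l_n^{-1/2}=f|_{\RR^d_+}\circ w$ found in the first step. Thus $f|_{\RR^d_+}\circ w=\sum_n b_n l_n\in G^\alpha_\alpha(\RR^d_+)$, and the continuity of $f\mapsto f|_{\RR^d_+}\circ w$ follows by composing the three continuous maps $f\mapsto\{a_{2n}(f)\}$ (Proposition \ref{char S and S'}), $\{a_{2n}(f)\}\mapsto\{b_n\}$ (previous paragraph), and $\{b_n\}\mapsto\sum_n b_n l_n$ (Theorem \ref{D Th3.6}$(i)$). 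Injectivity is immediate: $w$ is a bijection of $\RR^d_+$, so $f|_{\RR^d_+}\circ w\equiv 0$ forces $f\equiv 0$ on $\RR^d_+$, and a continuous even function vanishing on $\RR^d_+$ vanishes identically. I expect the main obstacle to be the second step together with the accompanying binomial and Stirling bookkeeping: one has to keep the constant structure uniform enough to obtain continuity and not merely membership, whereas the reduction to $d=1$ and the algebra of the first step are routine.
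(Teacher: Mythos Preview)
Your proposal is correct and follows essentially the same route as the proof in \cite{SSSB} that the paper cites (and that the paper itself sketches for the analogous flat case in Theorem~\ref{flat-expansion}): the identity $h_{2n}(\sqrt{\cdot})=c_n\,l_n^{-1/2}$, the connection formula expressing $l_n^{-1/2}$ in the basis $\{l_k\}$, the Stirling and binomial bounds, and the splitting inequality $|k|^{1/\alpha}+|n|^{1/\alpha}\le 2(|k|+|n|)^{1/\alpha}$, which is precisely (3.4) of \cite{SSSB} as discussed around \eqref{inequality (3.4)}. The only cosmetic remark is that the statement's last line contains a typo ($f\mapsto f\circ v$ should read $f\mapsto f|_{\RR^d_+}\circ w$), and you have correctly addressed the intended map.
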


%

By a careful examination of the proof of \cite[Proposition 3.1]{SSSB} we conclude that
almost all estimates given there for  $ \alpha \geq 1$,
are also true when $ \alpha \in (0,1)$, except the inequality
\begin{equation} \label{inequality (3.4)}
 |n|^{1/\alpha}+|m|^{1/\alpha}\leq 2(|n|+|m|)^{1/\alpha},\quad m,n\in \NN^d_0
\end{equation}
(see (3.4) in \cite{SSSB}).
To prove extensions of Proposition \ref{Prop3.1} and Proposition \ref{Prop3.2}
we use the following estimates instead.
Let $s=(s_1,s_2,\dots, s_d)\in\RR_{+}^d$ and
$\|s \|_p = (\sum_{k=1} ^d |s_k |^p)^{1/p}$, $s \in \RR_{+}^d$, $p \in \mathbb N.$
Then there exist $M_1,M_2>0$ (depending on $\alpha>0$, and $p$)
such that $M_1\|s\|_p \leq  (\sum_{j=1}^d s_j^{\frac{1}{\alpha}}) ^{\alpha}
\leq M_2 \|s\|_p,$
which implies
\begin{equation} \label{eq:modifikacija}
|m|^{1/\alpha}+|n|^{1/\alpha}\leq
C|m+n|^{1/\alpha}, \qquad m,n \in  \NN^d _0, \quad \alpha >0.
\end{equation}

Thus, if we replace \eqref{inequality (3.4)}
by \eqref{eq:modifikacija}, and otherwise follow the proof of  \cite[Proposition 3.1]{SSSB}, we get the following theorem for $P-$spaces.

\begin{thm} \label{even-spaces-1}
Let $\alpha>0$.
If $\ds f=\sum_{n\in\NN^d_0}{a_n(f)l_n}\in \mathbf{G}_{\alpha}^{\alpha}(\RR^d_{+})$, then $f\circ v\in \mathcal{H}_{\alpha/2, even}(\RR^d)$ and we have
$$f\circ v=\sum_{n\in\NN^d_0} b_{2n}h_{2n},$$
where $\{b_{2n}\}_{n\in\NN^d_0}\in \ell_{\alpha/2}(\NN^d_0)$  and  (\ref{luh}) holds. If, on the other hand,
$$
f=\sum_{n\in\NN_0^d} a_{2n}(f) h_{2n}\in \mathcal{H}_{\alpha/2, even}(\RR^d),
$$
then $ \ds f|_{\RR^d_{+}}\circ w  \in  \mathbf{G}_{\alpha}^{\alpha}(\RR^d_{+})$, 
and we have $\ds f|_{\RR^d_{+}} \circ w = \sum_{n\in\NN_0^d} b_n l_n$, with
$\{b_n \}_{n\in\NN_0^d}\in \ell_{\alpha/2}(\NN_0^d)$,
and such that (\ref{hul}) holds.
\end{thm}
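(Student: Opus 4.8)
The plan is to reduce the statement to Proposition \ref{Prop3.1} and Proposition \ref{Prop3.2}, both of which are already established for $\alpha\geq 1$, by carefully re-examining their proofs and checking that the only place where the restriction $\alpha\geq 1$ enters is through the inequality \eqref{inequality (3.4)}, which can be replaced by \eqref{eq:modifikacija} for arbitrary $\alpha>0$. The key conceptual point is that the spaces involved are, after passing to Laguerre and Hermite coefficients, exactly the sequence spaces $\ell_{\alpha/2}(\NN_0^d)$ (by Theorem \ref{glavna} on the orthant side, and by Proposition \ref{char S and S'} together with Lemma \ref{pprk1} on the global side), so that the entire argument is really a statement about the action of the linear transformations \eqref{luh} and \eqref{hul} on these sequence spaces.

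First I would record that, since $\mathbf{G}_{\alpha}^{\alpha}(\RR_{+}^d)=\mathcal{G}_{\alpha}(\RR_{+}^d)$ (Theorem \ref{glavna} and the remark following it), an element $f=\sum_{n}a_n(f)l_n\in\mathbf{G}_{\alpha}^{\alpha}(\RR_{+}^d)$ is precisely one whose coefficient sequence lies in $\ell_{\alpha/2}(\NN_0^d)$, hence satisfies $|a_n(f)|\leq C e^{-h|n|^{1/\alpha}}$ for some $C,h>0$; and similarly, by Lemma \ref{pprk1} and Proposition \ref{char S and S'}, $f=\sum_n a_{2n}(f)h_{2n}\in\mathcal{H}_{\alpha/2,even}(\RR^d)$ means exactly $\{a_{2n}(f)\}\in\ell_{\alpha/2}(\NN_0^d)$. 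So the content reduces to: the map defined by \eqref{luh} sends $\ell_{\alpha/2}(\NN_0^d)$ continuously into $\ell_{\alpha/2}(\NN_0^d)$, and likewise for \eqref{hul}. For the first direction one estimates
$$
|b_{2n}|\leq \frac{\pi^{d/4}\sqrt{(2n)!}}{2^{|n|}n!}\sum_{k\in\NN_0^d}|a_{k+n}(f)|\left|{k-\mathbf{1/2}\choose k}\right|,
$$
uses $\sqrt{(2n)!}/(2^{|n|}n!)\leq 1$ and the standard bound $\bigl|{k-\mathbf{1/2}\choose k}\bigr|\leq C(1+|k|)^{-1/2}$ (or any polynomial bound suffices), then inserts $|a_{k+n}(f)|\leq Ce^{-h|k+n|^{1/\alpha}}$ and invokes \eqref{eq:modifikacija} in the form $|k+n|^{1/\alpha}\geq c(|k|^{1/\alpha}+|n|^{1/\alpha})$ to split the exponential; summing the resulting geometric-type series in $k$ leaves a factor $e^{-h'|n|^{1/\alpha}}$ with a slightly smaller $h'>0$, which is exactly membership in $\ell_{\alpha/2}(\NN_0^d)$. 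The converse direction via \eqref{hul} is entirely analogous, using $2^{|k|}(k+n)!/\sqrt{(2(k+n))!}$ growing at most like $2^{|k|}\cdot(\text{polynomial})$ absorbed by the Gelfand–Shilov tail, again together with \eqref{eq:modifikacija}. The fact that $f\circ v$ is even and smooth, and that $f|_{\RR_+^d}\circ w$ is again smooth on $\RR_+^d$ with coefficients given by \eqref{luh}, \eqref{hul}, is purely formal and identical to \cite{SSSB}; it does not interact with $\alpha$.

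The main obstacle — and the reason the theorem is not literally a corollary of Propositions \ref{Prop3.1}--\ref{Prop3.2} — is exactly the replacement of \eqref{inequality (3.4)} by \eqref{eq:modifikacija}: for $\alpha\in(0,1)$ the function $t\mapsto t^{1/\alpha}$ is superadditive rather than subadditive, so the naive inequality fails, and one has to pass through the norm-equivalence estimate $M_1\|s\|_p\leq(\sum_j s_j^{1/\alpha})^\alpha\leq M_2\|s\|_p$ on $\RR_+^d$ to recover a usable bound with a dimension- and $\alpha$-dependent constant $C$ in \eqref{eq:modifikacija}. Once that substitution is made, every exponential splitting in the proof of \cite[Proposition 3.1]{SSSB} goes through verbatim (the constant $C$ only forces one to shrink $h$ to $h/C$ at the relevant step, which is harmless since $h>0$ is arbitrary in the definition of $\ell_{\alpha/2}(\NN_0^d)$), and continuity of both maps follows by tracking constants uniformly, exactly as in \cite{SSSB}. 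I would therefore structure the write-up as: (1) reduce to the sequence-space formulation; (2) state the single modification \eqref{eq:modifikacija} and note it is the only change needed; (3) carry out the coefficient estimate for \eqref{luh} in detail and remark that \eqref{hul} is symmetric; (4) conclude continuity and the identity of the two maps with those of \cite{SSSB} on the common range $\alpha\geq 1$.
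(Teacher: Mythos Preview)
Your proposal is correct and follows precisely the paper's approach: the paper's proof of Theorem~\ref{even-spaces-1} consists of the single remark that one replaces \eqref{inequality (3.4)} by \eqref{eq:modifikacija} and otherwise follows the proof of \cite[Proposition 3.1]{SSSB} verbatim, and you have simply fleshed this out with the explicit sequence-space reduction (via Theorem~\ref{glavna} and Lemma~\ref{pprk1}) and the coefficient estimates for \eqref{luh} and \eqref{hul}. One small caveat: your remark that for $\alpha\in(0,1)$ superadditivity of $t\mapsto t^{1/\alpha}$ makes \eqref{inequality (3.4)} \emph{fail} is backwards---superadditivity gives $|m|^{1/\alpha}+|n|^{1/\alpha}\le(|m|+|n|)^{1/\alpha}=|m+n|^{1/\alpha}$, which is actually stronger than \eqref{inequality (3.4)}---but since you ultimately invoke \eqref{eq:modifikacija} in the correct direction to split $e^{-h|k+n|^{1/\alpha}}$, the argument is unaffected.
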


%

\begin{cor}  Let $\alpha>0$. The mapping
 $f\mapsto f\circ v, $ from $\mathbf{G}_{\alpha}^{\alpha}(\RR^d_{+})$ to $\mathcal{H}_{\alpha/2, even}(\RR^d) $ is a topological isomorphism, and its inverse mapping from 
$\mathcal{H}_{\alpha/2, even}(\RR^d) $ to 
$  \mathbf{G}_{\alpha}^{\alpha}(\RR^d_{+})$ is given by
$\ds \psi\mapsto\psi|_{\RR^d_{+}}\circ w$.
\end{cor}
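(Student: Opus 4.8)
The plan is to obtain the Corollary as a formal consequence of Theorem \ref{even-spaces-1}: that theorem already produces the two linear maps $\Phi\colon f\mapsto f\circ v$ from $\mathbf G_\alpha^\alpha(\RR^d_+)$ to $\mathcal H_{\alpha/2,even}(\RR^d)$, and $\Psi\colon \psi\mapsto \psi|_{\RR^d_+}\circ w$ from $\mathcal H_{\alpha/2,even}(\RR^d)$ to $\mathbf G_\alpha^\alpha(\RR^d_+)$. What remains is to check that (a) both maps are continuous, and (b) they are mutually inverse; then, since a continuous linear bijection with continuous inverse is by definition a topological isomorphism, the statement follows with $\Phi^{-1}=\Psi$.

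First I would record continuity. Although the statement of Theorem \ref{even-spaces-1} only asserts well-definedness, its proof — which runs the argument of \cite[Propositions 3.1 and 3.2]{SSSB} with the inequality \eqref{inequality (3.4)} replaced by the more flexible estimate \eqref{eq:modifikacija} — in fact yields the norm bounds: for every $h>0$ there are $h'>0$ and $C>0$ such that $\Phi$ carries the Banach block $\mathbf G^{\alpha,h}_{\alpha,h}(\RR^d_+)$ boundedly (with operator norm $\leq C$) into the corresponding Hermite building block of $\mathcal H_{\alpha/2,even}(\RR^d)$, and symmetrically for $\Psi$; passing to the inductive limits then gives continuity of $\Phi$ and $\Psi$. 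An equivalent and perhaps cleaner route is to work through the sequence models: by Theorem \ref{glavna} one has $\mathbf G_\alpha^\alpha(\RR^d_+)\cong\ell_{\alpha/2}(\NN^d_0)$ via the Laguerre coefficients, and by Lemma \ref{pprk1} one has $\mathcal H_{\alpha/2,even}(\RR^d)\cong\ell_{\alpha/2}(\NN^d_0)$ via $f\mapsto\{a_{2n}(f)\}_{n}$; under these identifications $\Phi$ and $\Psi$ become exactly the coefficient transforms \eqref{luh} and \eqref{hul}, whose continuity on $\ell_{\alpha/2}(\NN^d_0)$ is what is verified in the proof of Theorem \ref{even-spaces-1}.

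Next I would check that $\Phi$ and $\Psi$ are inverse to each other, a pointwise computation resting on $v\big(w(y)\big)=y$ for $y\in\overline{\RR^d_+}$ and $w\big(v(x)\big)=(|x_1|,\dots,|x_d|)$ for $x\in\RR^d$. For $f\in\mathbf G_\alpha^\alpha(\RR^d_+)$ and $y\in\RR^d_+$ one has
\[
\big(\Psi\circ\Phi\big)(f)(y)=(f\circ v)\big(w(y)\big)=f\big(v(w(y))\big)=f(y),
\]
so $\Psi\circ\Phi=\id$; for $\psi\in\mathcal H_{\alpha/2,even}(\RR^d)$ and $x\in\RR^d$,
\[
\big(\Phi\circ\Psi\big)(\psi)(x)=\big(\psi|_{\RR^d_+}\circ w\big)\big(v(x)\big)=\psi\big(|x_1|,\dots,|x_d|\big)=\psi(x),
\]
the last equality using that $\psi$ is even in each variable (and continuity of $\psi$ across the coordinate hyperplanes), so $\Phi\circ\Psi=\id$. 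Combining this with the continuity from the previous step finishes the proof.

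I expect the only genuinely delicate point to be the continuity bookkeeping in the first step: one must either quote the quantitative bounds that appear inside the proof of Theorem \ref{even-spaces-1} (obtained through the modification \eqref{eq:modifikacija}) or argue via the sequence-space isomorphisms, whereas the mutual-inverse identity and the final conclusion are routine once the restriction to $\RR^d_+$ and the even extension are tracked carefully.
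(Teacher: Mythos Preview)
Your proposal is correct and follows essentially the same approach as the paper: both arguments rest on the coefficient estimates \eqref{luh} and \eqref{hul} coming from Theorem \ref{even-spaces-1} to establish continuity, together with the elementary pointwise identities $v\circ w=\id$ on $\overline{\RR^d_+}$ and $w\circ v=(|x_1|,\dots,|x_d|)$ for bijectivity. The only cosmetic difference is in the continuity bookkeeping: the paper shows that bounded sets are mapped to bounded sets and then invokes that $\mathbf G_\alpha^\alpha(\RR^d_+)$ and $\mathcal H_{\alpha/2,even}(\RR^d)$ are bornological (being $(DFN)$-spaces), whereas you argue directly that each Banach block $\mathbf G^{\alpha,h}_{\alpha,h}$ is carried boundedly into some block on the Hermite side and pass to the inductive limit; for inductive limits of Banach spaces these two routes are equivalent.
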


\begin{proof}
Let $\alpha>0$. The mapping  $f\mapsto f\circ v$ given in Theorem \ref{even-spaces-1}
is a bijection from $ \mathbf{G}_{\alpha}^{\alpha}(\RR^d_{+})$ to 
$ \mathcal{H}_{\alpha/2, even}(\RR^d) $, and its inverse is given by
$\ds (f\circ v)|_{\RR^d_{+}}\circ w$.   
Let $B$ be a bounded set in $\mathbf{G}_{\alpha}^{\alpha}(\RR^d_{+})$, and let
$ f=\sum_{n\in\NN^d_0}{a_n(f)l_n} \in B$. Then by 
Theorem \ref{glavna} it follows that there exists $h>0$ which depends only on $B$
such that 
\begin{equation} \label{bddsequences}
\{a_n(f)\}_{n\in\mathbb N_0^d} \in \ell^{\infty}_{[\vartheta_{h,\alpha}]}(\NN_0^d)\;\;
\mbox{ i.e.}\;\;  |a_n(f)| e^{h |n|^{1/\alpha}} < \infty.    
\end{equation}
It follows that sequences 
$\{b_{2n}\}_{n\in\NN_0^d}$ given by  \eqref{luh}
for all $\{a_n(f)\}_{n\in\mathbb N_0^d}$ such that  \eqref{bddsequences}
holds, satisfy 
$$
|b_{2n}| e^{h_1 |2n|^{1/2\alpha}} < \infty \;\;\; \text{ for some } \;\;\;
h_1>0 \;\; \text{ and all} \;\;\; n\in\NN_0^d. 
$$
This means that the set 
$$
\big \{  f|_{\RR^d_{+}} \circ w = \sum_{n\in\NN_0^d} b_n l_n \; | \; f \in B \big \}
$$
is bounded in  $\mathcal{H}_{\alpha/2, even}(\RR^d)$. 
In other words, the image of a bounded set in $ \mathbf{G}_{\alpha}^{\alpha} (\RR^d_{+})$ 
under the mapping $f\mapsto f\circ v$ is a bounded set in $\mathcal{H}_{\alpha/2, even}(\RR^d)$. 

Since $\mathbf{G}_{\alpha}^{\alpha}(\RR^d_{+})$ is a bornological space (by Theorem \ref{glavna} and the fact that $ \ell_{\alpha/2}(\NN_0^d)$ is a
$(DFN)-$space), it follows that the mapping $f\mapsto f\circ v$ is continuous. 

\par 

By \eqref{eq:mappingTHermite} it follows that 
$\mathcal{H}_{\alpha/2}(\RR^d)$ is a $(DFS)$ space, and the same is true for 
$  \mathcal{H}_{\alpha/2, even}(\RR^d) $.
Then, reasoning in the same manner as here above, and by using (\ref{hul}),
we conclude that the mapping $\ds \psi \mapsto  \psi|_{\RR^d_{+}}\circ w$ 
maps  bounded sets in $  \mathcal{H}_{\alpha/2, even}(\RR^d) $ onto bounded sets in
$ \mathbf{G}_{\alpha}^{\alpha}(\RR^d_{+})$, and therefore it is continuous.   

Thus $\mathbf{G}_{\alpha}^{\alpha}(\RR^d_{+})$ and $\mathcal{H}_{\alpha/2, even}(\RR^d) $ are topologically isomorphic.
\end{proof}

By invoking Proposition \ref{Prop3.2} and its proof we show that
a result  analogous to Theorem \ref{even-spaces-1} holds for the Beurling case as well.

\begin{thm} \label{even-spaces-2} Let $\alpha>0$.
If $\ds f=\sum_{n\in\NN^d_0}{a_n(f)l_n}\in \mathbf{g}_{\alpha}^{\alpha}(\RR^d_{+})$, then $f\circ v\in \mathcal{H}_{0,\alpha/2, even}(\RR^d)$ and 
$f\circ v=\sum_{n\in\NN^d_0} b_{2n}h_{2n}$,
where $\{b_{2n}\}_{n\in\NN^d_0}\in \ell_{0,\alpha/2}(\NN^d_0)$ is such that (\ref{luh}) holds. If, on the other hand,
$$
f=\sum_{n\in\NN_0^d} a_{2n}(f) h_{2n}\in \mathcal{H}_{0,\alpha/2, even}(\RR^d),
$$
then  $\ds f|_{\RR^d_{+}} \circ w\in  \mathbf{g}_{\alpha}^{\alpha}(\RR^d_{+})$. Moreover, 
$\ds f|_{\RR^d_{+}} \circ w = \sum_{n\in\NN^d_0} b_n l_n$ with
$\{b_n \}_{n\in\NN_0^d}\in \ell_{0,\alpha/2}(\NN_0^d)$,
and  (\ref{hul}) holds.
\end{thm}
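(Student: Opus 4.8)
The plan is to follow the proof of Theorem \ref{even-spaces-1} essentially line by line, adapting the Roumieu bookkeeping to the Beurling setting and using the estimates from the proofs of \cite[Propositions 3.1 and 3.2]{SSSB}. First I would reduce everything to a statement about sequence spaces: by Theorem \ref{glavnabeurling}, $f=\sum_{n\in\NN^d_0}a_n(f)l_n$ lies in $\mathbf{g}_{\alpha}^{\alpha}(\RR^d_+)$ exactly when $\{a_n(f)\}_{n\in\NN^d_0}\in\ell_{0,\alpha/2}(\NN_0^d)$, i.e.\ when for \emph{every} $h>0$ there is $C_h>0$ with $|a_n(f)|\le C_h e^{-h|n|^{1/\alpha}}$; and, by the obvious Beurling counterpart of Lemma \ref{pprk1}, $\mathcal{H}_{0,\alpha/2, even}(\RR^d)$ consists of those $f=\sum_{n\in\NN^d_0}a_{2n}(f)h_{2n}$ with $\{a_{2n}(f)\}_{n\in\NN^d_0}\in\ell_{0,\alpha/2}(\NN_0^d)$. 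The facts that $f\circ v$ is smooth and even, that $w$ is a right inverse of $v$ on $\overline{\RR^d_+}$, and the coefficient identities \eqref{luh} and \eqref{hul}, are purely algebraic and $\alpha$-independent, so they are taken over verbatim from \cite{SSSB}; the only thing to check is that the sequence produced by \eqref{luh} (resp.\ \eqref{hul}) belongs to $\ell_{0,\alpha/2}(\NN_0^d)$.

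For the direction $\mathbf{g}_{\alpha}^{\alpha}(\RR^d_+)\to\mathcal{H}_{0,\alpha/2, even}(\RR^d)$ I would write $b_{2n}$ from \eqref{luh} as a prefactor that is at most polynomial in $|n|$ times $\sum_{k\in\NN^d_0}a_{k+n}(f)\binom{k-\mathbf{1/2}}{k}$, bound the combinatorial factor by an at most polynomial function of $|k|$, and then run the estimate from the proof of \cite[Proposition 3.1]{SSSB}, the single modification being that the inequality \eqref{inequality (3.4)} (which is restricted to $\alpha\ge 1$ there) is everywhere replaced by \eqref{eq:modifikacija}, valid for all $\alpha>0$. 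Concretely: given a target rate $h_1>0$, one invokes the hypothesis with a sufficiently large rate $h$, writes $e^{-h|k+n|^{1/\alpha}}\le e^{-(h/C)|k|^{1/\alpha}}e^{-(h/C)|n|^{1/\alpha}}$ using \eqref{eq:modifikacija}, absorbs the polynomial prefactor and the convergent $k$-sum into part of the $n$-exponential, and arrives at $|b_{2n}|\le C' e^{-h_1|n|^{1/\alpha}}$. Since this works for every $h_1$, $\{b_{2n}\}_{n\in\NN^d_0}\in\ell_{0,\alpha/2}(\NN_0^d)$. The reverse direction is symmetric, based on \eqref{hul}: the factors $2^{|k|}(k+n)!/\sqrt{(2(k+n))!}$ and $\binom{k-\mathbf{3/2}}{k}$ are again dominated by a polynomial in $|k|$ times a factor summable over $k$, and one reruns the estimate of \cite[Proposition 3.2]{SSSB} with the same replacement of \eqref{inequality (3.4)} by \eqref{eq:modifikacija}.

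The main obstacle here, such as it is, is purely organizational: one must take care, in every estimate, to use the ``for every $h$'' hypothesis of the Beurling space to produce a ``for every $h$'' conclusion, which in practice means fixing the desired output rate first and then choosing the input rate large enough before applying the hypothesis; once this is checked, the proof of the Roumieu case goes through unchanged. As in the statement of Theorem \ref{even-spaces-1}, only the mapping properties are recorded here; that $f\mapsto f\circ v$ is then a topological isomorphism between $\mathbf{g}_{\alpha}^{\alpha}(\RR^d_+)$ and $\mathcal{H}_{0,\alpha/2, even}(\RR^d)$ with inverse $\psi\mapsto\psi|_{\RR^d_+}\circ w$ follows, as in the corollary to Theorem \ref{even-spaces-1}, from the fact that $\ell_{0,\alpha/2}(\NN_0^d)$ is an $(FN)$-space, so that all spaces involved are Fr\'echet, hence bornological, and preservation of bounded sets yields continuity.
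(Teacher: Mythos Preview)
Your proposal is correct and matches the paper's approach: the paper does not give a detailed proof of Theorem \ref{even-spaces-2} either, stating only that one invokes the proofs of \cite[Propositions 3.1 and 3.2]{SSSB} with the same replacement of \eqref{inequality (3.4)} by \eqref{eq:modifikacija} as in Theorem \ref{even-spaces-1}, and then carries out the routine Roumieu-to-Beurling adaptation you describe. Your remark about the topological isomorphism is likewise exactly what the paper records separately as the corollary following the theorem.
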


By using the fact that  $\mathbf{g}_{\alpha}^{\alpha}(\RR^d_{+})$ and  $\mathcal{H}_{0,\alpha/2, even}(\RR^d)$
are bornological spaces (since they are  metrizable being  Frechet spaces, cf. \cite[Prop. 24.13, pp. 283]{MV}), we have the following corrollary of Theorem \ref{even-spaces-2}.

\begin{cor}
 Let $\alpha>0$. The mapping $f\mapsto f\circ v$ from
$\mathbf{g}_{\alpha}^{\alpha}(\RR^d_{+})$ to $ \mathcal{H}_{0,\alpha/2, even}(\RR^d)$
is a topological isomorphism, and its inverse mapping is given by
$\ds \psi\mapsto\psi|_{\RR^d_{+}}\circ w$.
\end{cor}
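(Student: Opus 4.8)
The plan is to derive this corollary directly from Theorem \ref{even-spaces-2} together with the topological structure of the two spaces involved, mimicking the argument used in the Roumieu case. By Theorem \ref{even-spaces-2}, the map $\Phi: f\mapsto f\circ v$ is a well-defined bijection from $\mathbf{g}_{\alpha}^{\alpha}(\RR^d_{+})$ onto $\mathcal{H}_{0,\alpha/2,\mathrm{even}}(\RR^d)$, with inverse $\psi\mapsto \psi|_{\RR^d_{+}}\circ w$; indeed, formulas \eqref{luh} and \eqref{hul} are mutually inverse transformations of the Laguerre and (even) Hermite coefficients. So the only thing left to prove is bicontinuity.

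First I would record that both $\mathbf{g}_{\alpha}^{\alpha}(\RR^d_{+})$ and $\mathcal{H}_{0,\alpha/2,\mathrm{even}}(\RR^d)$ are Fr\'echet spaces: the former because $T:\ell_{0,\alpha/2}(\NN_0^d)\to\mathbf{g}^{\alpha}_{\alpha}(\RR^d_{+})$ is a topological isomorphism (by Theorem \ref{glavnabeurling}) onto the $(FN)$-space $\ell_{0,\alpha/2}(\NN_0^d)$; the latter because it is a closed subspace of the $(FN)$-space $\mathcal{H}_{0,\alpha/2}(\RR^d)$ (the Beurling analogue of Lemma \ref{pprk1}). Hence both are metrizable, so by \cite[Prop.\ 24.13, pp.\ 283]{MV} they are bornological. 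Consequently it suffices to show that $\Phi$ and $\Phi^{-1}$ each map bounded sets to bounded sets.

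Next I would verify the boundedness-preserving property via the coefficient description. Let $B\subset \mathbf{g}_{\alpha}^{\alpha}(\RR^d_{+})$ be bounded. Transporting through $T^{-1}$, the set $\{\{a_n(f)\}_n : f\in B\}$ is bounded in $\ell_{0,\alpha/2}(\NN_0^d)$, i.e.\ for \emph{every} $h>0$ there is $C_h>0$ with $|a_n(f)|\le C_h e^{-h|n|^{1/\alpha}}$ for all $f\in B$ and $n\in\NN_0^d$. Plugging these bounds into \eqref{luh} and estimating exactly as in the proof of \cite[Proposition 3.1]{SSSB} — but using the inequality \eqref{eq:modifikacija} in place of \eqref{inequality (3.4)}, which is what made Theorem \ref{even-spaces-1} and Theorem \ref{even-spaces-2} work for all $\alpha>0$ — one obtains that for every $h_1>0$ there is $C'_{h_1}>0$ with $|b_{2n}|\le C'_{h_1} e^{-h_1|2n|^{1/(2\alpha)}}$ uniformly over $f\in B$. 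This says precisely that $\{\Phi(f):f\in B\}$ has bounded image in $\ell_{0,\alpha/2}(\NN_0^d)$ under the even-Hermite coefficient map, hence is bounded in $\mathcal{H}_{0,\alpha/2,\mathrm{even}}(\RR^d)$. The reverse direction is symmetric, using \eqref{hul} and the same reindexed estimate. Therefore $\Phi$ and $\Phi^{-1}$ are both continuous, and $\Phi$ is a topological isomorphism.

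The main obstacle is bookkeeping rather than conceptual: one must make sure that the estimates in the proof of \cite[Proposition 3.1]{SSSB}, originally carried out for a single function and a single seminorm, are genuinely \emph{uniform} over a bounded set $B$ — that is, that the constants depend only on the growth parameter $h$ attached to $B$ and not on the individual $f$. This is automatic once one notes that the estimates only ever use the bound $|a_n(f)|\le C_h e^{-h|n|^{1/\alpha}}$, but it should be stated explicitly. A secondary, purely formal point is establishing the Beurling version of Lemma \ref{pprk1} (that $\mathcal{H}_{0,\alpha/2,\mathrm{even}}$ is closed in $\mathcal{H}_{0,\alpha/2}$, hence Fr\'echet); this follows by the same argument referenced there from \cite[Proposition 2.2]{SSSB}.
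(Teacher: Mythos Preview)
Your proposal is correct and follows essentially the same approach as the paper: the paper simply remarks that both $\mathbf{g}_{\alpha}^{\alpha}(\RR^d_{+})$ and $\mathcal{H}_{0,\alpha/2,\mathrm{even}}(\RR^d)$ are Fr\'echet (hence bornological by \cite[Prop.\ 24.13]{MV}) and then invokes Theorem \ref{even-spaces-2}, leaving the bounded-sets-to-bounded-sets verification implicit by analogy with the Roumieu corollary. You have spelled out exactly those implicit steps, so your argument is a fleshed-out version of the paper's one-line justification.
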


\section{Flat Pilipovi\'c spaces, extension to $ \RR_{\flat}$} \label{sec6}

It is observed by Toft in \cite{Toft1} that together with Pilipovi\'c spaces
$\mathcal{H}_{\alpha/2}(\RR^d)$ and $\mathcal{H}_{0,\alpha/2}(\RR^d)$
which are essentially linked to the weights of the form
$\vartheta_{h,\alpha}(\cdot) = e^{h|\cdot|^{1/(2\alpha)}}, $
$ \alpha, h> 0$,
one can observe another type of weights, which gives rise to "fine tuning" between Pilipovi\'{c} spaces when $ \alpha \in (0, 1/2)$.

Such spaces are called flat Pilipovi\'c spaces in  \cite{Toft1}. In this section we study their counterparts on positive orthants.

Following \cite[Section 3.1]{Toft1},
we consider the extension of $\RR_{+}$ given by
$$\RR_{\flat}:=\RR_{+}\cup \{\flat_{\sigma}:\sigma>0\}, \quad \text{and } \quad
\overline{\RR_{\flat}} = \RR_{\flat} \cup \{0\},
$$
where the usual ordering holds in $\RR_{+}$, and new elements are ordered in the following manner:
$$
x_1<\flat_{\sigma_1}< \flat_{\sigma_2}<x_2  \quad \text{if}
\quad
\sigma_1<\sigma_2 \quad \text{and} \quad x_1< 1/2 \leq x_2,
\quad x_1, x_2 \in \RR_{+}.
$$
In addition, $ \flat_\infty := \frac{1}{2} $.

Let $\flat_{\sigma}\in \RR_{\flat}\setminus \RR_{+}$, $ \sigma >0$,  and $h>0 $. We define $\vartheta_{h,\flat_{\sigma}}(n) : = h^{|n|}\cdot n !^{1/2\sigma}$,
$n \in \NN_0^d$,
and consider the space   $\ell^{\infty}_{[\vartheta_{h,\flat_{\sigma}}]}$
 of  sequences  $\{a_n\}_{n\in\NN^d_0}$ satisfying
\begin{equation}\label{koef.flat}
\ds \|\{a_n\}_{n\in\NN^d_0}\|_{\ell^{\infty}_{[\vartheta_{h,\flat_{\sigma}}]}(\NN_0^d)}=\|\{a_n\cdot \vartheta_{h,\flat_{\sigma}}(n) \}_{n\in\NN^d_0}\|_{l^{\infty}}=
\sup_{n\in\NN^d_0}\big| a_n \big| \cdot  h^{|n|} \cdot n!^{\frac{1}{2\sigma}} <\infty.
\end{equation}
Moreover,
$$
\ds \ell_{\flat_{\sigma}}(\NN^d_0):=\bigcup_{h>0} \ell^{\infty}_{[\vartheta_{h,\flat_{\sigma}}]}(\NN_0^d)
\quad \text{and} \quad \ell_{0,\flat_{\sigma}}(\NN_0^d) :=
\ds \bigcap_{h>0} \ell^{\infty}_{[\vartheta_{h,\flat_{\sigma}}]}(\NN_0^d),
$$
in the sense of inductive and projective limit topology, respectively.

We first show inclusion relations between these spaces.

\begin{lem}
Let $\alpha,\beta\in \RR_{\flat}$ and $\alpha<\beta$. Then $\ell_{\alpha}(\NN^d_0)\subseteq \ell_{\beta}(\NN^d_0)$ and $\ell_{0,\alpha}(\NN^d_0)\subseteq \ell_{0,\beta}(\NN^d_0)$.
\end{lem}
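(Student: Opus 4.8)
The plan is to deduce both inclusions from inclusions between the defining Banach spaces $\ell^{\infty}_{[\vartheta_{h,\cdot}]}$. This suffices, because $\bigcup_{h>0}X_{h}\subseteq\bigcup_{h>0}Y_{h}$ as soon as every $X_{h}$ embeds into some $Y_{h'}$, and $\bigcap_{h>0}X_{h}\subseteq\bigcap_{h>0}Y_{h}$ as soon as for every $h'$ some $X_{h}$ embeds into $Y_{h'}$; in all cases below I will in fact verify $X_{h}\subseteq Y_{h'}$ either for all $h,h'$ or with $h'=h$, which trivially yields both conditions. Recall that $\{a_n\}_{n\in\NN^d_0}\in\ell^{\infty}_{[\vartheta_{h,\alpha}]}$ for $\alpha\in\RR_{+}$ means $|a_n|\le Ce^{-h|n|^{1/(2\alpha)}}$, while $\{a_n\}_{n\in\NN^d_0}\in\ell^{\infty}_{[\vartheta_{h,\flat_\sigma}]}$ means $|a_n|\le Ch^{-|n|}(n!)^{-1/(2\sigma)}$. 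Throughout I will use the elementary bounds $n!\le|n|!\le d^{|n|}n!$ for $n\in\NN^d_0$ and $(m/e)^m\le m!\le m^m$ for $m\in\NN_0$, which turn every comparison below into an asymptotic estimate in the single variable $|n|$.

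By the ordering on $\RR_{\flat}$ there are four cases. If $\alpha,\beta\in\RR_{+}$ with $\alpha<\beta$, then $1/(2\beta)<1/(2\alpha)$, so $e^{h'|n|^{1/(2\beta)}-h|n|^{1/(2\alpha)}}$ stays bounded for all $h,h'>0$ and hence $\ell^{\infty}_{[\vartheta_{h,\alpha}]}\subseteq\ell^{\infty}_{[\vartheta_{h',\beta}]}$. If $\alpha=\flat_{\sigma_1}$ and $\beta=\flat_{\sigma_2}$ with $\sigma_1<\sigma_2$, then $(n!)^{1/(2\sigma_2)-1/(2\sigma_1)}\le1$ since $n!\ge1$, hence $\ell^{\infty}_{[\vartheta_{h,\flat_{\sigma_1}}]}\subseteq\ell^{\infty}_{[\vartheta_{h,\flat_{\sigma_2}}]}$. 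In both cases the two asserted inclusions follow at once.

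The remaining two cases, where the ordering forces one index to be a flat one, are where the Stirling bounds are actually needed. Suppose first $\alpha\in\RR_{+}$ with $\alpha<1/2$ and $\beta=\flat_\sigma$; using $(n!)^{1/(2\sigma)}\le|n|^{|n|/(2\sigma)}$ we obtain, for all $h,h'>0$,
$$|a_n|\,h'^{|n|}(n!)^{1/(2\sigma)}\le C\exp\Big(|n|\ln h'+\tfrac{1}{2\sigma}|n|\ln|n|-h|n|^{1/(2\alpha)}\Big),$$
and since $1/(2\alpha)>1$ the term $h|n|^{1/(2\alpha)}$ eventually dominates, so the supremum over $n$ is finite and $\ell^{\infty}_{[\vartheta_{h,\alpha}]}\subseteq\ell^{\infty}_{[\vartheta_{h',\flat_\sigma}]}$. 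Suppose next $\alpha=\flat_\sigma$ and $\beta\in\RR_{+}$ with $\beta\ge1/2$; using $(n!)^{-1/(2\sigma)}\le d^{|n|/(2\sigma)}(|n|/e)^{-|n|/(2\sigma)}$ we obtain, for all $h,h'>0$,
$$|a_n|\,e^{h'|n|^{1/(2\beta)}}\le C\exp\Big(|n|\ln\tfrac{d^{1/(2\sigma)}}{h}+\tfrac{1}{2\sigma}|n|+h'|n|^{1/(2\beta)}-\tfrac{1}{2\sigma}|n|\ln|n|\Big),$$
and since $1/(2\beta)\le1$ the term $-\tfrac{1}{2\sigma}|n|\ln|n|$ dominates, so again the supremum over $n$ is finite and $\ell^{\infty}_{[\vartheta_{h,\flat_\sigma}]}\subseteq\ell^{\infty}_{[\vartheta_{h',\beta}]}$. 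In each of these two cases both asserted inclusions follow; the value $\flat_{\infty}=1/2$, if it occurs, is treated as the real number $1/2$ and is thus subsumed under the first and third cases.

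The only delicate point is precisely this last split: embedding into a flat space requires the exponent $1/(2\alpha)$ to exceed $1$, while embedding a flat space into the Gelfand--Shilov-type scale requires $1/(2\beta)\le1$ — exactly the ranges that $\alpha<\flat_\sigma$ and $\flat_\sigma<\beta$ permit in $\RR_{\flat}$ — and it is the multi-index bound $|n|!\le d^{|n|}n!$ that allows one to replace $n!$ by $|n|!$ (and so to apply the single-variable Stirling estimate) in the second of the two displays above.
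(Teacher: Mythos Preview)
Your proof is correct and follows essentially the same approach as the paper: both arguments split into the four natural cases and use Stirling-type bounds to compare the weights $\vartheta_{h,\alpha}$ with $\vartheta_{h',\beta}$. Your treatment is somewhat more systematic --- by establishing inclusions $\ell^{\infty}_{[\vartheta_{h,\alpha}]}\subseteq\ell^{\infty}_{[\vartheta_{h',\beta}]}$ valid for all $h,h'>0$ (or for $h'=h$ in the flat--flat case) you cover the Roumieu and Beurling inclusions simultaneously, and you handle the range $\beta\ge 1/2$ directly rather than only $\beta=1/2$ --- but the underlying estimates are the same.
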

\begin{proof}
We will prove the Roumieu case $\ell_{\alpha}(\NN^d_0)\subseteq \ell_{\beta}(\NN^d_0)$, and leave the Beurling case for the reader. In addition, to simplify calculations, we will use $|n|!$ instead of $n!$ without losing generality.

Note that the claim is obvious when $\alpha, \beta\in \RR_{+} $ or $\alpha=\flat_{\sigma_1}, \beta=\flat_{\sigma_2}, \sigma_1<\sigma_2$.

Let $\alpha=\flat_{\sigma},\sigma>0$,
 $\beta=\frac{1}{2}$, and let $h>0$ be chosen so that$\{a_n\}_{n\in\NN^d_0}
 \in \ell^{\infty}_{[\vartheta_{h,\flat_{\sigma}}]}(\NN_0^d) $, i.e.
$\sup_{n\in\NN^d_0} |a_n|\cdot  h^{|n|}\cdot |n|!^{\frac{1}{2\sigma}}<\infty$. Using  the estimate $|n|^{|n|} \leq |n|! e^n$
we get
$$|a_n| e^{h_1|n|}=
\leq  |a_n| \cdot |n|!^{\frac{1}{2\sigma}}\cdot  e^{\frac{|n|}{2\sigma}}\cdot \frac{e^{h_1|n|}}{|n|^{\frac{|n|}{2\sigma}}}
\leq C  |a_n|\cdot |n|!^{\frac{1}{2\sigma}}\cdot h^{|n|}<\infty,
$$
for a suitable $C>0$,
where $h_1 > 0 $ is chosen so that
$ \displaystyle
e^{\frac{1}{2\sigma}}
e^{h_1}   \leq h |n|^{-\frac{1}{2\sigma}},
$
for $n \in \NN^d_0$ large enough.
It follows that $\{a_n\}_{n\in\NN^d_0} \in \ell_{1/2}(\NN^d_0)$.

Next we  prove the claim when $\alpha\in (0,\frac{1}{2})$ and $ \beta=\flat_{\sigma}$, $ \sigma>0$. Let $h>0$ be chosen so that such that $\ds |a_n|\cdot e^{h|n|^{1/(2\alpha)}}<\infty$. Then we have
$$
|a_n| h_1 ^{|n|} \cdot  |n|!^{\frac{1}{2\sigma}} \leq
|a_n| h_1 ^{|n|} e^{h|n|^{\frac{1}{2\alpha}}} \; \frac{|n|^{\frac{|n|}{2\sigma}}}{e^{h|n|^{1/(2\alpha)}}}
\leq |a_n|  e^{h |n|^{\frac{1}{2\alpha}}}
\left( \frac{h_1|n|^{\frac{1}{2\sigma}}}{ e^{h|n|^{\frac{1}{2\alpha} - 1}}} \right)^{|n|}, \quad h_1 > 0.
$$
Since $\ds 1> 2\alpha $, we have that  $\ds \sup_{n\in\NN_0^d}
|n|^{\frac{1}{2\sigma}} e^{-h|n|^{\frac{1}{2\alpha} - 1}}\leq C_h$, so we can   choose $h_1>0$ such that $C_h h_1 < 1$, wherefrom it follows that
$$|a_n| h_1 ^{|n|} |n|!^{\frac{1}{2\sigma}}\leq  |a_n|  e^{h |n|^{\frac{1}{2\alpha}}}<\infty, $$
i.e.
$\{a_n\}_{n\in\NN^d_0} \in \ell_{\flat_{\sigma}}(\NN^d_0)$,
$\sigma>0$.

The inclusion  $\ell_{0,\alpha}(\NN^d_0)\subseteq \ell_{0,\beta}(\NN^d_0)$ can be proved in a similar fashion.
\end{proof}

\begin{rem}
The sequence
$\{e^{-|n|}\}_{n\in\NN^d_0}$ satisfies
$$
\{e^{-|n|}\}_{n\in\NN^d_0} \in \ell_{1/2}(\NN^d_0)
\quad \text{ and} \quad \{e^{-|n|}\}_{n\in\NN^d_0} \not\in \bigcup_{\sigma>0} \ell_{\flat_{\sigma}}(\NN^d_0),
$$
hence the inclusion  $\ds\bigcup_{\sigma>0} \ell_{\flat_{\sigma}}(\NN^d_0)\subset \ell_{\frac{1}{2}}(\NN^d_0)$ is strict.
\end{rem}


The next result can be proved in a similar way as Proposition \ref{lm:nizovi-potapanje}.

\begin{prop}
Fix an arbitrary $\alpha=\flat_{\sigma}$ and $h_1>h_2$. Then the canonical inclusion
$ \ds \ell^{\infty}_{[\vartheta_{h_1,\flat_{\sigma}}]}(\NN_0^d)\rightarrow \ell^{\infty}_{[\vartheta_{h_2,\flat_{\sigma}}]}(\NN_0^d)$ is nuclear.
\end{prop}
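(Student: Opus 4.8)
The plan is to transcribe the proof of Proposition~\ref{lm:nizovi-potapanje} almost verbatim, the only structural ingredient being the fact that a continuous linear map which factors as a composition of two quasinuclear maps is nuclear (cf.~\cite{Pietsch}). First I would fix an auxiliary parameter $h_3\in(h_2,h_1)$ and write the canonical inclusion as the composition
$$
\ell^{\infty}_{[\vartheta_{h_1,\flat_{\sigma}}]}(\NN_0^d)\xrightarrow{\ I_{h_1}^{h_3}\ }\ell^{\infty}_{[\vartheta_{h_3,\flat_{\sigma}}]}(\NN_0^d)\xrightarrow{\ I_{h_3}^{h_2}\ }\ell^{\infty}_{[\vartheta_{h_2,\flat_{\sigma}}]}(\NN_0^d),
$$
so that it suffices to establish the quasinuclearity of $I_{h_1}^{h_3}$ (and, by the same argument, of $I_{h_3}^{h_2}$).

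Next I would introduce the coordinate functionals $e_m\in(\ell^{\infty}_{[\vartheta_{h_1,\flat_{\sigma}}]}(\NN_0^d))'$, $m\in\NN_0^d$, given by $e_m(\{a_n\}_{n\in\NN^d_0})=a_m\,\vartheta_{h_3,\flat_{\sigma}}(m)=a_m\,h_3^{|m|}\,m!^{1/(2\sigma)}$. The key (and only) computation is the norm of $e_m$: testing against the sequence supported at $m$ with $a_m=(h_1^{|m|}m!^{1/(2\sigma)})^{-1}$ gives $\|e_m\|_{(\ell^{\infty}_{[\vartheta_{h_1,\flat_{\sigma}}]})'}=(h_3/h_1)^{|m|}$, the factorial weights cancelling to leave pure geometric decay. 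Since $h_3<h_1$, summing over $m$ yields
$$
\sum_{m\in\NN^d_0}\|e_m\|_{(\ell^{\infty}_{[\vartheta_{h_1,\flat_{\sigma}}]})'}=\Big(\sum_{k\geq 0}(h_3/h_1)^{k}\Big)^{d}=\Big(\frac{h_1}{h_1-h_3}\Big)^{d}<\infty.
$$
Finally, for every $\{a_n\}_{n\in\NN^d_0}\in\ell^{\infty}_{[\vartheta_{h_1,\flat_{\sigma}}]}(\NN_0^d)$ one has
$$
\|I_{h_1}^{h_3}(\{a_n\}_{n\in\NN^d_0})\|_{\ell^{\infty}_{[\vartheta_{h_3,\flat_{\sigma}}]}}=\sup_{m\in\NN^d_0}|a_m|\,h_3^{|m|}\,m!^{1/(2\sigma)}=\sup_{m\in\NN^d_0}\big|e_m(\{a_n\}_{n\in\NN^d_0})\big|\leq\sum_{m\in\NN^d_0}\big|e_m(\{a_n\}_{n\in\NN^d_0})\big|,
$$
so $I_{h_1}^{h_3}$ is quasinuclear by \cite[Definition~3.2.3]{Pietsch}; the same reasoning applies to $I_{h_3}^{h_2}$, and therefore the inclusion $\ell^{\infty}_{[\vartheta_{h_1,\flat_{\sigma}}]}(\NN_0^d)\to\ell^{\infty}_{[\vartheta_{h_2,\flat_{\sigma}}]}(\NN_0^d)$ is nuclear.

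I do not expect a genuine obstacle here: the flat weights $\vartheta_{h,\flat_{\sigma}}(n)=h^{|n|}n!^{1/(2\sigma)}$ behave even more conveniently than the weights $\vartheta_{h,\alpha}$ in Proposition~\ref{lm:nizovi-potapanje}, since the ratio $\vartheta_{h_3,\flat_{\sigma}}(m)/\vartheta_{h_1,\flat_{\sigma}}(m)=(h_3/h_1)^{|m|}$ is an honest geometric term independent of $\sigma$. The only two points requiring a modicum of care are the exact identification of $\|e_m\|$ and the need to factor through the intermediate parameter $h_3$ (a single quasinuclear map need not be nuclear, whereas a composition of two is); both are handled precisely as in the earlier proposition, so the write-up will be short.
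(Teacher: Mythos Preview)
Your proposal is correct and follows exactly the approach the paper intends: the paper states that this proposition ``can be proved in a similar way as Proposition~\ref{lm:nizovi-potapanje}'' and the (commented-out) draft proof in the source defines the same coordinate functionals $e_m(\{a_n\})=a_m h_3^{|m|}m!^{1/(2\sigma)}$ and uses the same geometric-series estimate $\|e_m\|\le C(h_3/h_1)^{|m|}$. Your version is in fact slightly more careful, since you explicitly factor through an intermediate $h_3$ to obtain nuclearity from two quasinuclear inclusions, whereas the paper's draft proof only writes out quasinuclearity and leaves the factorization implicit.
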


We recall that in \cite{Toft1}, the spaces of  formal expansions
$$\mathcal H_{\flat_{\sigma}}(\RR^d)=\{\sum_{n\in\NN^d_0} a_n h_n: \{a_n\}_{n\in\NN^d_0}\in \ell_{\flat_{\sigma}}(\NN^d_0)\},
$$
$$ \mathcal H_{0,\flat_{\sigma}}(\RR^d)=\{\sum_{n\in\NN^d_0} a_n h_n: \{a_n\}_{n\in\NN^d_0}\in \ell_{0,\flat_{\sigma}}(\NN^d_0)\}
$$
were considered.
By
$\mathcal H_{\flat_{\sigma}, even}(\RR^d)$ and
$\mathcal H_{0,\flat_{\sigma}, even}(\RR^d)$
we denote the space of even
functions from $\mathcal H_{\flat_{\sigma}}(\RR^d)$ and  $\mathcal H_{0,\flat}(\RR^d)$, respectively.

\par

We  introduce  the space of  formal expansions
$$\mathcal G_{\flat_{\sigma}}(\RR^d)=\{\sum_{n\in\NN^d_0} a_n l_n: \{a_n\}_{n\in\NN^d_0}\in \ell_{\flat_{\sigma/2}}(\NN^d_0)\}
$$
and its Beurling counterpart
$$
\mathcal G_{0,\flat_{\sigma}}(\RR^d)=\{\sum_{n\in\NN^d_0} a_n l_n: \{a_n\}_{n\in\NN^d_0}\in \ell_{0, \flat_{\sigma/2}}(\NN^d_0)\}.
$$

Let us note that, for the arbitrary $\{a_n\}_{n\in\NN^d_0}\in \ell_{\flat_{\sigma/2}}(\NN^d_0)$, the expansion $\sum_{n\in\NN^d_0} a_n l_n$ is absolutely convergent, thus defining the function in $L^2(\mathbb R_{+}^d)$. The same holds for the Beurling case.



Now, Theorems \ref{even-spaces-1} and \ref{even-spaces-2} can be extended as follows.

\begin{thm} Let $\sigma>0$, and $v(x)=(x_1^2,\dots, x_d^2)$, $ x \in \RR^d$.
If $\ds f=\sum_{n\in\NN^d_0}{a_n(f)l_n}\in \mathcal{G}_{\flat_{\sigma}}(\RR^d_{+})$, then $\ds f\circ v\in \mathcal{H}_{\flat_{\sigma}, even}(\RR^d)$ and we have
$$f\circ v=\sum_{n\in\NN^d_0} b_{2n}h_{2n}$$
such that $\{b_{2n}\}_{n\in\NN^d_0}\in \ell_{\flat_{\sigma}}(\NN^d_0)$ and  (\ref{luh}) holds.
\end{thm}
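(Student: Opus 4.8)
The plan is to reduce the statement to a single coefficient estimate, reusing the purely algebraic part of the proof of \cite[Proposition 3.1]{SSSB}. First I would unwind the hypothesis: $f=\sum_{n}a_n(f)l_n\in\mathcal G_{\flat_\sigma}(\RR^d_+)$ means exactly $\{a_n(f)\}_{n}\in\ell_{\flat_{\sigma/2}}(\NN^d_0)$, i.e.\ there are $h,C>0$ with $|a_n(f)|\le C\,h^{-|n|}(n!)^{-1/\sigma}$ for all $n\in\NN^d_0$. In particular $\{a_n(f)\}_n\in s$, so by the isomorphism $\iota$ (or Lemma~\ref{lm:S+}) $f\in\mathcal S(\RR^d_+)$, and $f\circ v(x)=f(x_1^2,\dots,x_d^2)$ is a well-defined even smooth function on $\RR^d$. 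The identity \eqref{luh} for the Hermite coefficients $b_{2n}$ of $f\circ v$ is derived exactly as in \cite{SSSB}: one expresses $l_n\circ v$ through Laguerre functions of order $-\mathbf{1/2}$ and then through the Hermite functions $h_{2m}$ by the connection formulas recalled in Section~\ref{sec:2}, and rearranges $\sum_n a_n(f)(l_n\circ v)$ into $\sum_n b_{2n}h_{2n}$. This derivation is independent of the growth class, so it carries over verbatim; the only new point is to check that $\{b_{2n}\}_{n}\in\ell_{\flat_\sigma}(\NN^d_0)$, which at the same time legitimizes the rearrangement by absolute convergence.

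For the estimate of $b_{2n}$ from \eqref{luh} three elementary facts suffice: $\binom{k-\mathbf{1/2}}{k}=\prod_j 4^{-k_j}\binom{2k_j}{k_j}\in(0,1]$ (multi-index conventions); $\sqrt{(2n)!}\le 2^{|n|}n!$, whence $\pi^{d/4}\sqrt{(2n)!}\,(2^{|n|}n!)^{-1}\le\pi^{d/4}$; and $(k+n)!\ge k!\,n!$. Inserting the bound on $|a_{k+n}(f)|$ into \eqref{luh} gives
\[
|b_{2n}|\le\pi^{d/4}C\sum_{k\in\NN^d_0}h^{-|k+n|}\bigl((k+n)!\bigr)^{-1/\sigma}\le\pi^{d/4}C\,h^{-|n|}(n!)^{-1/\sigma}\sum_{k\in\NN^d_0}h^{-|k|}(k!)^{-1/\sigma},
\]
and the last series converges because $(k!)^{-1/\sigma}$ decays super-exponentially in $|k|$. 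Hence $|b_{2n}|\le C'h^{-|n|}(n!)^{-1/\sigma}$ for a new constant $C'$.

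It remains to match this with a $\flat_\sigma$-weight. Using $(2n)!\le 4^{|n|}(n!)^2$, hence $(n!)^{-1/\sigma}\le 4^{|n|/(2\sigma)}\bigl((2n)!\bigr)^{-1/(2\sigma)}$, I obtain
\[
|b_{2n}|\le C'\bigl(4^{1/(2\sigma)}h^{-1}\bigr)^{|n|}\bigl((2n)!\bigr)^{-1/(2\sigma)}=C'\,h_1^{-|2n|}\,(2n)!^{-1/(2\sigma)},\qquad h_1:=h^{1/2}4^{-1/(4\sigma)}>0.
\]
Since the odd-index coefficients vanish, this is precisely the statement that the full coefficient sequence lies in $\ell^\infty_{[\vartheta_{h_1,\flat_\sigma}]}(\NN^d_0)\subset\ell_{\flat_\sigma}(\NN^d_0)$, i.e.\ $\{b_{2n}\}_{n}\in\ell_{\flat_\sigma}(\NN^d_0)$ in the sense explained after Lemma~\ref{pprk1}. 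Consequently $\sum_n b_{2n}h_{2n}$ converges absolutely to an element of $\mathcal H_{\flat_\sigma}(\RR^d)$, which is even (each $h_{2n}$ is), it equals $f\circ v$ by the rearrangement, and therefore $f\circ v\in\mathcal H_{\flat_\sigma,even}(\RR^d)$, as claimed.

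I expect the main difficulty not to be the estimates — which are in fact softer than the corresponding ones for $\alpha\in(0,1)$ in Theorem~\ref{even-spaces-1}, since the flat weights factor multiplicatively and no subadditivity inequality of the type \eqref{eq:modifikacija} is needed — but rather making the identity $f\circ v=\sum_n b_{2n}h_{2n}$ fully rigorous: one has to justify composing the $L^2$-convergent Laguerre series with $v$ term by term and interchanging the two summations, and this is exactly where the absolute convergence of $\sum_{n,k}|a_{k+n}(f)|\binom{k-\mathbf{1/2}}{k}$ obtained above is used.
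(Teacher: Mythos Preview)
Your proposal is correct and follows essentially the same approach as the paper: both arguments insert the coefficient bound $|a_n(f)|\le C r^{|n|}/n!^{1/\sigma}$ into \eqref{luh}, use $\bigl|\binom{k-\mathbf{1/2}}{k}\bigr|\le 1$, $\sqrt{(2n)!}\lesssim C^{|n|}n!$, and $(k+n)!\ge k!\,n!$ to split the sum, and then convert $(n!)^{-1/\sigma}$ into $((2n)!)^{-1/(2\sigma)}$ to land in $\ell_{\flat_\sigma}$. Your write-up is somewhat more explicit about the absolute-convergence justification for the rearrangement, which the paper condenses into a single sentence, but the ideas and estimates are the same.
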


\begin{proof}
Suppose that $\ds f =\sum_{n\in\NN^d_0}{a_n(f)l_n}\in \mathcal{G}_{\flat_{\sigma}}(\RR^d_{+})$, i.e.
$ \displaystyle |a_n(f)|\leq C \frac{r^{|n|}}{|n|!^{1/\sigma}}$,
$ n \in \NN^d_0,$ for some $C,r>0$.
Since   $\displaystyle \sum_{n\in\NN^d} |a_n|\leq C \sum_{n\in\NN^d} \frac{r^{|n|}}{|n|!^{1/\sigma}} < \infty$, the absolute convergence argument allows us to apply  formula (\ref{luh}). Then
by using   $ \sqrt{(2n)!}  \leq d^{|n|}n!$,
$ |k|! \cdot |n|! \leq |k+n|! $,
and
$$
\big|{k-{\bf 1/2}\choose k}\big| \leq 1,
$$
we obtain
$$
|b_{2n}(f)|\leq \big|\frac{(-1)^n  \pi^{\frac{d}{4}} \sqrt{(2n)!} }{2^{|n|}n!}\big|\cdot \sum_{k\in\NN_0^d} |a_{k+n}(f)|\cdot \big|{k-{\bf 1/2}\choose k}\big|
\leq C_1 \frac{r^{|n|}}{|n|!^{1/\sigma}}\sum_{k\in\NN_0^d}
\frac{r^{|k|}}{|k|!^{1/\sigma}}, \quad n\in\NN_0^d,
$$
for a suitable $C_1>0$.
Since $\displaystyle \sum_{k\in\NN_0^d}  \frac{r^{|k|}}{k!^{1/\sigma}} < \infty $, we get
$$|b_{2n}(f)|\leq C_2  \frac{r^{|n|}}{|n|!^{1/\sigma}}\leq C_3\frac{ \sqrt{r}^{2n}}{|2n|!^{1/(2\sigma)}},$$
for suitable constants
$C_2, C_3 > 0$, which gives
$$
\sup_{n \in \NN_0^d} |b_{2n}(f) | h^{2n}  |2n|!^{1/(2\sigma)}  < \infty,
$$
where $h:= \frac{1}{\sqrt{r}} >0$. Thus
 the sequence $\{b_{2n} \}_{n\in\NN^d}\in \ell_{\flat_{\sigma}}(\NN^d_0)$  gives rise to the Hermite expansion,
and defines a function in the space $\mathcal{H}_{\flat_{\sigma}, even}(\RR^d)$.
\end{proof}

\begin{thm} \label{flat-expansion}
Let $\sigma>0$,
$w(x)=(\sqrt{x_1},\dots \sqrt{x_d}),$ $x\in \RR_{+}^d$,
and let $\ds f=\sum_{n\in\NN_0^d} a_{2n}(f) h_{2n}\in \mathcal{H}_{\flat_{\sigma}, even}(\RR^d) $. Then $\ds f|_{\RR^d_{+}}\circ w $ belongs to $  \mathcal G_{\flat_{\sigma}}(\RR^d_{+})$,
$$f\circ v=\sum_{n\in\NN^d_0} b_{2n}h_{2n},$$
$\{b_n(f)\}_{n\in\NN_0^d}\in\ell_{\flat_{\sigma}}(\NN^d_0)$, and
 (\ref{hul}) holds.
\end{thm}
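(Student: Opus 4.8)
The plan is to run the argument of the preceding theorem in reverse, following the proof of \cite[Proposition 3.2]{SSSB}. First I would unwind the hypothesis: since $f=\sum_{n\in\NN_0^d}a_{2n}(f)h_{2n}\in\mathcal H_{\flat_\sigma,even}(\RR^d)$, the sequence $\{a_{2n}(f)\}_{n\in\NN_0^d}$ belongs to $\ell_{\flat_\sigma}(\NN_0^d)$; hence, up to the usual harmless replacement of multi-index factorials by factorials of $|\cdot|$ (which costs only factors $C^{|\cdot|}$), there are $C,r>0$ with $|a_{2n}(f)|\le C\,r^{|n|}/(2|n|)!^{1/(2\sigma)}$ for all $n$. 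In particular $\sum_n|a_{2n}(f)|<\infty$, so $f$ is a genuine (indeed Schwartz) function and $f|_{\RR_+^d}\circ w$ is well defined on $\RR_+^d$.

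Next I would establish formula \eqref{hul} for the Laguerre coefficients $b_n=\langle f|_{\RR_+^d}\circ w,l_n\rangle$. This is exactly the change-of-variables computation from the proof of \cite[Proposition 3.2]{SSSB}: one uses $H_{2m}(x)=(-1)^{m}2^{2m}m!\,L_m^{-1/2}(x^2)$, so that $h_{2m}(w(x))$ equals a constant multiple of $e^{-(x_1+\dots+x_d)/2}L_m^{-1/2}(x)$, then expands $L_m^{-1/2}$ in the system $\{L_j\}_j$ via the connection formula $L_m^{-1/2}=\sum_{j\le m}\binom{j-\mathbf{3/2}}{j}L_{m-j}$, and rearranges the resulting double series. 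The rearrangement is legitimate because of the decay of $\{a_{2n}(f)\}$ recorded above, so the manipulations of \cite{SSSB} (made there under Gelfand--Shilov decay) carry over verbatim and give $f|_{\RR_+^d}\circ w\in L^2(\RR_+^d)$ with coefficients given by \eqref{hul}.

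It then remains to estimate $b_n$ and read off the membership. From \eqref{hul}, using $\big|\binom{k-\mathbf{3/2}}{k}\big|\le1$, the inequalities $(|k|+|n|)!\le\sqrt{(2|k|+2|n|)!}$ and $(2|k|)!\,(2|n|)!\le(2|k|+2|n|)!$ (both from $\binom{2m}{m}\ge1$), and the decay of $a_{2(k+n)}(f)$, I would obtain
\[
|b_n|\;\le\;\frac{2^{|n|}}{\pi^{d/4}}\sum_{k\in\NN_0^d}2^{|k|}\,\frac{C\,r^{|k|+|n|}}{(2|k|)!^{1/(2\sigma)}\,(2|n|)!^{1/(2\sigma)}}\;=\;\frac{C_2\,(2r)^{|n|}}{(2|n|)!^{1/(2\sigma)}},\qquad n\in\NN_0^d,
\]
with $C_2=C\pi^{-d/4}\sum_{k\in\NN_0^d}(2r)^{|k|}/(2|k|)!^{1/(2\sigma)}<\infty$. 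Finally $|n|!^2\le(2|n|)!$ yields $|n|!^{1/\sigma}\le(2|n|)!^{1/(2\sigma)}$, so $|b_n|\,|n|!^{1/\sigma}\le C_2(2r)^{|n|}$; choosing $h_1=1/(2r)$ this says $\{b_n\}_{n\in\NN_0^d}\in\ell^{\infty}_{[\vartheta_{h_1,\flat_{\sigma/2}}]}(\NN_0^d)\subset\ell_{\flat_{\sigma/2}}(\NN_0^d)$. Hence $\sum_n b_nl_n$ converges absolutely, $f|_{\RR_+^d}\circ w=\sum_n b_nl_n\in\mathcal G_{\flat_\sigma}(\RR_+^d)$, and \eqref{hul} holds.

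The only real obstacle is the middle step: verifying that the Laguerre coefficients of $f|_{\RR_+^d}\circ w$ are precisely \eqref{hul}. This is not a new computation but a check that the interchange of summations and the termwise change of variables performed in \cite[Proposition 3.2]{SSSB} remain valid under the present flat decay --- which they do, since $\{a_{2n}(f)\}$ here decays at least as fast as in the Gelfand--Shilov case treated there. The concluding estimate is entirely routine.
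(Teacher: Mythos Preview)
Your proposal is correct and follows essentially the same route as the paper's proof: both invoke the coefficient formula \eqref{hul} (justifying its applicability by the absolute convergence inherited from \cite{SSSB}), estimate $|b_n|$ by splitting the factorial $(2|k+n|)!^{1/(2\sigma)}$ into a $k$-part and an $n$-part, sum the resulting $k$-series, and finally pass from $(2|n|)!^{1/(2\sigma)}$ to $|n|!^{1/\sigma}$. The paper's version is a sketch that suppresses exactly the elementary inequalities ($\big|\binom{k-\mathbf{3/2}}{k}\big|\le 1$, $(k+n)!\le\sqrt{(2(k+n))!}$, $(2|k|)!(2|n|)!\le(2|k+n|)!$) which you have written out explicitly, so your argument is in fact more complete than what appears in the paper.
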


\begin{proof}
We sketch the main steps of the proof, and note that the absolute convergence can be justified by  calculations similar to the ones given in \cite{SSSB}.
Suppose that
$$\ds |a_{2n}(f)|\leq C \frac{r^{|2n|}}{|2n|!^{1/(2\sigma)}}, \quad
n\in\NN^d_0$$
for some $C,r>0$.
From \cite[(3.5)]{SSSB} (see also \eqref{hul})
  we get
$$|b_n(f)|\leq \Big|\frac{(-1)^n 2^{|n|}}{\pi^{d/4}}\Big|\cdot \sum_{k\in\NN_0^d}\Big|{k-{ \bf 3/2} \choose k}\Big|\cdot \Big|\frac{(-1)^{|k|} 2^{|k|}  (k+n)! a_{2k+2n} }{\sqrt{(2k+2n)!}}\Big|$$
$$\leq C_1  \frac{r^{2n}}{|2n|!^{1/(2\sigma)}}\sum_{k\in\NN^d}  \frac{r^{2k}}{|2k|!^{1/(2\sigma)}}\leq C_2 \frac{r^{2n}}{|n|!^{1/\sigma}}$$
for suitable  $C_i>0$, $i=1,2$. Thus
 the sequence $\{b_{2n} \}_{n\in\NN^d}\in \ell_{\flat_{\sigma}}(\NN^d_0)$  gives rise to the Laguerre expansion,
and defines a function $f|_{\RR_{+}^d}\circ w\in   \mathcal G_{\flat_{\sigma}}(\RR^d_{+})$, which concludes the proof.
\end{proof}

\par

Similar claims hold for $\mathcal G_{0,\flat_{\sigma}}(\RR^d)$,
we leave details for the reader.

\section{acknowledgement}
This research was supported by the Science Fund of the Republic of
Serbia, $\#$GRANT No. 2727, {\it Global and local analysis of operators and
distributions} - GOALS. The first author is supported by the Ministry of Science, Technological Development and Innovation of the Republic of Serbia Grant No. 451-03-65/2024-03/ 200169. The second author is supported by the project F10 financed by the Serbian Academy of Sciences and Arts. The third author is supported by the Ministry of Science, Technological Development and Innovation of the Republic of Serbia (Grants No. 451--03--66/2024--03/200125 $\&$ 451--03--65/2024--03/200125).

\end{document}